 \newtheorem{thm}{Theorem}[section]
 \newtheorem{lm}[thm]{Lemma}
 \newtheorem{res}[thm]{Result}
 \newtheorem{prop}[thm]{Proposition}
 \theoremstyle{definition}
 \newtheorem{rmk}[thm]{Remark}
 \newtheorem{df}[thm]{Definition}
 \newcommand{\eps}{\varepsilon}
 \newcommand{\RR}{\mathbb R}
 \newcommand{\FF}{\mathbb F}
 \newcommand{\vspan}[1]{\left \langle #1 \right \rangle}
 \newcommand{\vspann}[2]{\left \langle #1 \, \| \, #2 \right \rangle}
 \newcommand{\set}[1]{ \left \{ #1 \right \} }
 \newcommand{\sett}[2]{ \left\{ #1 \, \, || \, \, #2 \right \} }
 \newcommand{\pg}{\textnormal{PG}}
 \newcommand{\one}{\mathbf 1}
 \newcommand{\zero}{\mathbf 0}
 \newcommand{\ceil}[1]{\left \lceil #1 \right \rceil}
 \newcommand{\pp}{\mathcal P}
 \newcommand{\ma}{\mathcal A}
 \newcommand{\ml}{\mathcal L}
 \newcommand{\mq}{\mathcal Q}
 \newcommand{\mh}{\mathcal H}
 \newcommand{\ms}{\mathcal S}
 \newcommand{\mo}{\mathcal O}
 \DeclareMathOperator{\GL}{GL}
 \DeclareMathOperator{\PGL}{PGL}
 \DeclareMathOperator{\PSL}{PSL}
 \DeclareMathOperator{\PGO}{PGO}
 \DeclareMathOperator{\SL}{SL}
 \DeclareMathOperator{\ColSp}{ColSp}
 \DeclareMathOperator{\rk}{rk}
 \DeclareMathOperator{\SRG}{SRG}
 \DeclareMathOperator{\Tr}{Tr} 
 \DeclareMathOperator{\id}{id}
 \DeclareMathOperator{\dds}{dds}
\newcommand{\ns}{\overline S}
\newcommand{\comments}[1]{}
\title{Association schemes and orthogonality graphs on anisotropic points of polar spaces}
\author{Sam Adriaensen\thanks{Department of Mathematics and Data Science, Vrije Universiteit Brussel, Pleinlaan 2, 1050 Elsene, Belgium. ORCID: 0000-0002-0925-9305. \href{mailto:sam.adriaensen@vub.be}{\url{sam.adriaensen@vub.be}}} \\ \textit{Vrije Universiteit Brussel} \and Maarten De Boeck\thanks{Department of Mathematical Sciences, University of Memphis, Dunn Hall, Norriswood Ave, Memphis, TN 38111, USA. ORCID: 0000-0001-8399-9064. \href{mailto:mdeboeck@memphis.edu}{\url{mdeboeck@memphis.edu}}\\Department of Mathematics: Algebra and Geometry, Ghent University, Gent, Flanders, Belgium} \\ \textit{University of Memphis}}
\date{}
\begin{document}

\maketitle

\begin{center}
 \emph{Dedicated to the memory of Kai-Uwe Schmidt.}
\end{center}

\begin{abstract}
 In this paper, we study association schemes on the anisotropic points of classical polar spaces.
 Our main result concerns non-degenerate elliptic and hyperbolic quadrics in $\pg(n,q)$ with $q$ odd.
 We define relations on the anisotropic points of such a quadric that depend on the type of line spanned by the points and whether or not they are of the same ``quadratic type''.
 This yields an imprimitive $5$-class association scheme.
 We calculate the matrices of eigenvalues and dual eigenvalues of this scheme.

 We also use this result, together with similar results from the literature concerning other classical polar spaces, to exactly calculate the spectrum of orthogonality graphs on the anisotropic points of non-degenerate quadrics in odd characteristic and of non-degenerate Hermitian varieties.
 As a byproduct, we obtain a 3-class association scheme on the anisotropic points of non-degenerate Hermitian varieties, where the relation containing two points depends on the type of line spanned by these points, and whether or not they are orthogonal.
\end{abstract}

\noindent \textbf{Keywords:} Association schemes; graph eigenvalues; finite geometry; polar spaces.

\noindent \textbf{MSC2020:} 05B25, 
05E30, 
51A50, 
51E20. 

\section{Introduction}

Distance-regular graphs are the combinatorial generalisation of distance-transitive graphs, and as such exhibit a great deal of regularity.
Some of the most famous families of distance-regular graphs arise from finite geometries, since geometries typically satisfy strong regularity conditions, and often have a lot of symmetries.
These families include the Grassmann graphs, dual polar graphs, and collinearity graphs of generalised polygons.
More information can be found in \cite{bcn}.
All of the above examples are graphs whose vertices are subspaces contained in some geometry.
However, there are also interesting graphs whose vertices are subspaces outside of the geometry.
Graphs of the latter type will be the focus of this paper.

Distance-regular graphs form a subclass of association schemes, which are the combinatorial generalisation of generously transitive group actions.
We are interested in association schemes defined on the anisotropic points of polar spaces embedded in finite projective spaces.
Such schemes have been investigated in \cite[\S 12]{bcn} and in \cite{chakravarti71, wei83, brouwervanlint, bannaihaosong, bannaisonghaowei, vanhove20}.
Most of these association schemes are described as follows.
Take a non-degenerate quadric or Hermitian variety $\mq$ in $\pg(n,q)$ and let $\pp$ denote the set of anisotropic points to $\mq$, excluding the nucleus in case $\mq$ is a parabolic quadric and $q$ is even.
Define relations on the points of $\pp$ where the relation containing a pair of distinct anisotropic points $(X,Y)$ depends on $|\vspan{X,Y} \cap \mq|$.
These relations often constitute an association scheme.
Most of the known results can be found in \cite[\S 3.1]{brouwervanmaldeghem} and we also present an overview in \Cref{Sec:Known}.
It is striking that the association schemes have been investigated, and their matrices of (dual) eigenvalues have been determined, for all non-degenerate quadrics and Hermitian varieties $\mq$ in $\pg(n,q)$, except for the case where $q$ is odd and $\mq$ is an elliptic or hyperbolic quadric.
Up to our knowledge, this has only been addressed in \cite[\S 12.2]{bcn} in the 3-dimensional case, see \cite[p.\ 335]{FisherPenttilaPraegerRoyle} for the intersection numbers in the elliptic case and \cite[\S 3]{adriaensen2022stability} for the matrix of eigenvalues in the hyperbolic case, and in \cite[\S 3.1.3]{brouwervanmaldeghem} for the case $q=3$.
In this paper, we complete the picture.
We prove that if $\mq$ is a non-degenerate elliptic or hyperbolic quadric in $\pg(n,q)$ with $q$ odd, then the scheme on the anisotropic points of $\mq$, where the relation containing a pair of distinct anisotropic points $(X,Y)$ depends on
\begin{itemize}
 \item how $\vspan{X,Y}$ intersects the quadric $\mq$,
 \item whether or not $X$ and $Y$ are of the same ``quadratic type'' (see \Cref{Df:QuadraticType}),
\end{itemize}
constitutes a 5-class association scheme (or a 4-class one in case $q=3$).
The (dual) eigenvalues of this scheme are presented in \Cref{Tab:Matrices}.
Note that the relations respect the equivalence relation defined by the quadratic type of the anisotropic points.
Hence, the association scheme is imprimitive.
We can restrict ourselves to the subscheme where we take anisotropic points of one quadratic type.
This yields a primitive 3-class association scheme, except when $q=3$, in which case we obtain the 2-class association scheme described in \cite[\S 3.1.3]{brouwervanmaldeghem}.

We use the eigenvalues we computed, together with the previously computed eigenvalues of similar association schemes, to compute the eigenvalues of the orthogonality graph on the anisotropic points of a non-degenerate quadric, for $q$ odd, or Hermitian variety embedded in $\pg(n,q)$ in \Cref{Sec:Ortho}.
As a byproduct, we obtain a 3-class association scheme on the anisotropic points of a Hermitian variety by splitting the complement of the $NU(n+1,q)$ graph (see \cite[\S 3.1.6]{bcn}) into two parts, see \Cref{Prop:FissionHerm}.

\paragraph{Overview.} \Cref{Sec:Prel} contains detailed preliminaries on finite classical polar spaces and association schemes.
\Cref{Sec:Known} contains an overview of the known association schemes on anisotropic points of the finite classical polar spaces.
In \Cref{Sec:NewScheme} we consider the scheme on the anisotropic points of the non-degenerate elliptic and hyperbolic quadrics embedded in $\pg(n,q)$ with $q$ odd, determine its eigenvalues, and give some combinatorial descriptions of the eigenspaces.
\Cref{Sec:Ortho} contains the eigenvalues of the aforementioned orthogonality graphs on anisotropic points of non-degenerate Hermitian varieties and, in case $q$ is odd, quadrics embedded in $\pg(n,q)$.

\section{Preliminaries}
 \label{Sec:Prel}

Throughout this article, $q$ will denote a prime power, and $\FF_q$ the finite field of order $q$.
If $q$ is odd, we denote the set of non-zero squares of $\FF_q$ as $S_q$, and the set of non-squares of $\FF_q$ as $\ns_q$.
For two sets $A,B \subseteq \FF_q$, we write $A \cdot B = \sett{ab}{a \in A, \, b \in B}$. %
It is generally known that $S_q \cdot \ns_q = \ns_q \cdot S_q = \ns_q$ and $S_q \cdot S_q = \ns_q \cdot \ns_q = S_q$.

The $n$-dimensional projective space over $\FF_q$ will be denoted as $\pg(n,q)$.
Its points correspond to the vector lines of $\FF^{n+1}_q$.
Recall that the number of points in $\pg(n,q)$ equals
\[
 \theta_n(q) \coloneqq \frac{q^{n+1}-1}{q-1} = q^n + q^{n-1} + \ldots + q + 1.
\]

\subsection{Classical polar spaces}

Polar spaces are a class of incidence geometries, which can be defined in several ways.
The definition due to Buekenhout and Shult \cite{buekenhoutshult} is as follows.
A \emph{point-line geometry} is a tuple $(\pp,\ml)$ of non-empty sets, where we call the elements of $\pp$ points, the elements of $\ml$ lines, and every line is a subset of the points.
We call two points \emph{collinear} if there is a line containing both of them.
A \emph{(singular) subspace} $\pi$ is a set of pairwise collinear points that fully contains any line intersecting it in at least two points.
Its \emph{rank} is the largest integer $r$ such that there exist subspaces $\emptyset \subsetneq \pi_1 \subsetneq \ldots \subsetneq \pi_r \subsetneq \pi$.
\begin{df}
 A \emph{polar space} of rank $r$ is a point-line geometry $(\pp,\ml)$ such that
 \begin{itemize}
  \item given a line $\ell$ and a point $P \notin \ell$, $P$ is collinear with either a unique point of $\ell$ or all points of $\ell$,
  \item the maximum rank of its subspaces is $r-1$,
  \item no point is collinear with all others,
  \item every line contains at least three points.
 \end{itemize}
 The subspaces of rank $r-1$ are called \emph{generators}.
\end{df}

We will now discuss the construction of the finite classical polar spaces.
These are polar spaces embedded in projective spaces over finite fields.
For an in-depth treaty of the subject, we refer the reader to \cite[\S 2]{brouwervanmaldeghem}, \cite[\S\S 1, 2, 5.1]{hirschfeldthas}, \cite[\S 7]{Shult11}, or \cite{BuekenhoutCameron}. 
The subspaces of finite classical polar spaces are projective subspaces and their rank corresponds to their projective dimension.
Most of the classical polar spaces arise from a polarity.

\begin{df}
 A \emph{polarity} of a projective space is an involution $\perp$ acting on the subspaces of the projective space that reverses inclusion, i.e.\ two subspaces $\pi$ and $\rho$ satisfy $\pi \subseteq \rho$ if and only if $\rho^\perp \subseteq \pi^\perp$.
 A subspace $\pi$ is called \emph{totally isotropic} with respect to the polarity $\perp$ if $\pi \subseteq \pi^\perp$.
\end{df}

Let $\pp$ and $\ml$ denote the sets of totally isotropic points and lines of a polarity, respectively.
If $\ml$ is non-empty and $\pp$ is not a hyperplane, then $(\pp,\ml)$ is a polar space.
Throughout the paper, if $\mq$ is a polar space embedded in a projective space $\pg(n,q)$, we call the points of $\mq$ the \emph{isotropic} points, and the points of $\pg(n,q)$ not lying in $\mq$ \emph{anisotropic}.

Let $\sigma$ be an involutary field automorphism of $\FF_q$.
Then $\sigma$ is either the identity, or $q$ is a square and $\sigma: \alpha \to \alpha^{\sqrt q}$.
A map $B: \FF_q^{n+1} \times \FF_q^{n+1} \to \FF_q$ is called \emph{sesquilinear} if it is additive in both components and 
$B(\alpha x, \beta y) = \alpha \beta^\sigma B(x,y)$ for all scalars $\alpha, \beta \in \FF_q$ and all vectors $x,y \in \FF_q^{n+1}$.
Moreover, $B$ is called \emph{reflexive} if there exists a scalar $\gamma$ such that $B(x,y) = \gamma B(y,x)^\sigma$ for all $x,y \in \FF_q^{n+1}$.
We call $B$ a(n)
\begin{itemize}
 \item \emph{symmetric (bilinear) form} if $\sigma = \id$ and $\gamma = 1$,
 \item \emph{alternating (bilinear) form} if $\sigma = \id$, $\gamma=-1$, and $B(x,x) = 0$ for all $x \in \FF_q^{n+1}$,
 \item \emph{Hermitian form} if $\sigma \neq \id$ and $\gamma=1$.
\end{itemize}
In addition, $B$ is \emph{non-degenerate} if for every non-zero vector $x \in \FF_q^{n+1}$ there exists a vector $y \in \FF_q^{n+1}$ such that $B(x,y) \neq 0$.
A non-degenerate reflexive form $B$ gives rise to the polarity $\perp$ on the subspaces of $\FF_q^{n+1}$ (which are the same as the subspaces of $\pg(n,q)$) defined by
\[
 W^\perp = \sett{ x \in \FF_q^{n+1} }{ \left(\forall y \in W \right) (B(x,y) = 0 ) }.
\]

In case the characteristic of the underlying field is odd, all polar spaces arise from a polarity in this way.
In case the characteristic is even, this is no longer true.
However, there is still a unified way to describe the different types of classical polar spaces in both odd and even characteristic.


\subsubsection{Quadrics}

We first discuss the polar spaces related to symmetric forms.
To incorporate fields of even characteristic, we need to take a slightly different approach.

Let $\kappa$ be a quadratic form on the vector space $\FF_q^{n+1}$.
The set $\mq$ of points $X$ in $\pg(n,q)$ whose coordinate vectors $x$ satisfy $\kappa(x) = 0$ is called a \emph{quadric}.
We call $\kappa$ \emph{degenerate} if it can be linearly transformed into a quadratic form depending on less than $n+1$ variables, and \emph{non-degenerate} otherwise.
The quadric $\mq$ is called (non-)degenerate accordingly.
If $\mq$ is non-degenerate, and fully contains some lines of $\pg(n,q)$, then the points of $\mq$ together with these lines constitute a polar space.

Associated to $\kappa$ is the bilinear form $B(x,y) = \kappa(x+y) - \kappa(x) - \kappa(y)$.
Despite the non-degeneracy of $\kappa$, this bilinear form can still be degenerate.
If $q$ is odd, $B$ is never degenerate, and $\mq$ consists of the isotropic points of the polarity arising from $B$.
If $q$ is even and $n$ is odd, $B$ is non-degenerate, but every point is isotropic with respect to the associated polarity.
If $q$ and $n$ are both even, then $B$ is degenerate.

If $q$ is odd, then for any projective point $X$ one of the following three options holds:
\begin{enumerate}
 \item every vector representative $x$ of $X$ satisfies $\kappa(x) \in S_q$, which we denote by $\kappa(X) = S_q$,
 \item every vector representative $x$ of $X$ satisfies $\kappa(x) \in \ns_q$, which we denote by $\kappa(X) = \ns_q$,
 \item every vector representative $x$ of $X$ satisfies $\kappa(x) = 0$, which we denote by $\kappa(X) = 0$, or equivalently $X \in \mq$.
\end{enumerate}

\begin{df}
 \label{Df:QuadraticType}
    Given a quadratic form $\kappa$ on $\FF_q^{n+1}$ and a point $X$ in $\pg(n,q)$, we call $\kappa(X)$ the \emph{(quadratic) type} of $X$.
\end{df}

There are three classes of non-degenerate quadrics.

\begin{itemize}
 \item If $n$ is even, $\pg(n,q)$ contains up to isomorphism a unique non-degenerate quadric, defined by the quadratic form
\[
 \kappa(x) = x_0 x_1 + \ldots + x_{n-2} x_{n-1} + x_n^2.
\]
This quadric is called \emph{parabolic}, and denoted $\mq(n,q)$.
It constitutes a polar space of rank $\frac n2$.
If $q$ is even, then the bilinear form $B(x,y) = \kappa(x+y) - \kappa(x) - \kappa(y)$ is degenerate.
Indeed, if $x=(0,\dots,0,1)$, then $B(x,y) = 0$ for all $y \in \FF_q^{n+1}$.
The projective point $X$ corresponding to $x=(0,\dots,0,1)$ is called the \emph{nucleus} of $\mq(n,q)$.

 \item If $n$ is odd, $\pg(n,q)$ contains up to isomorphism two non-degenerate quadrics.
They are both defined by a quadratic form
\[
 \kappa(x) = x_0 x_1 + \ldots + x_{n-3} x_{n-2} + f(x_{n-1},x_n),
\]
for some non-degenerate quadratic form $f$.
If $f$ is irreducible, the quadric is called \emph{elliptic}, and denoted as $\mq^-(n,q)$.
It is of rank $\frac {n-1}2$.
If $f$ is reducible, the quadric is called \emph{hyperbolic}, and denoted as $\mq^+(n,q)$.
It is of rank $\frac {n+1}2$.
\end{itemize}

We will often discuss the different quadrics simultaneously.
We will denote them as $\mq^\eps(n,q)$, with $\eps \in \{0, \pm 1\}$; the notations $\pm$ and $\pm 1$ will be used interchangeably.
The quadric is hyperbolic, parabolic, or elliptic when $\eps$ equals $1,0,-1$, respectively, where we always assume that $\eps \equiv n \pmod 2$.

We will give the formula for the number of points on $\mq^\eps(n,q)$.

\begin{res}[{\cite[Theorem 1.41]{hirschfeldthas}}]
 \label{ResQuadricSize}
 The number of points on $\mq^\eps(n,q)$ equals
 \[
  \frac 1 {q-1} \left( q^\frac{n+\eps}2 - 1 \right) \left( q^\frac{n-\eps}2 + 1 \right) = \theta_{n-1}(q) + \eps q^\frac{n-1}2.
 \]
 Therefore, the number of anisotropic points equals
 \[
  q^\frac{n-1}2 \left( q^\frac{n+1}2 - \eps \right).
 \]
\end{res}

Next we describe the intersection of quadrics with subspaces.

\begin{df}
 Let $\pi$ and $\rho$ be two disjoint subspaces in $\pg(n,q)$ and let $S$ be a subset of the points of $\pi$.
 The \emph{cone} $\rho S$ with \emph{vertex} $\rho$ and \emph{base} $S$ is the set of points
 \[
  \bigcup_{P \in S} \vspan{P,\rho}.
 \]
 By convention, $\rho S = S$ if $\rho = \emptyset$ and $\rho S = \rho$ if $S = \emptyset$.
\end{df}

A cone $\rho \mq$ with as base a quadric $\mq$ is again a quadric.
This quadric is degenerate unless $\rho = \emptyset$ and $\mq$ is non-degenerate.
If $\mq$ is a quadric in $\pg(n,q)$, then for every subspace $\pi$, $\mq \cap \pi$ is a quadric in $\pi$.
We will use the notation $\Pi_m \mq^\eps(n,q)$ to denote a quadric whose vertex is $m$-dimensional and whose base is a $\mq^\eps(n,q)$.

\begin{res}[{\cite[\S 1.7]{hirschfeldthas}}]
 Consider the non-degenerate quadric $\mq^\eps(n,q)$ in $\pg(n,q)$.
 Suppose that $n$ and $q$ are not both even, so that there exists a polarity $\perp$ associated to $\mq^\eps(n,q)$.
 Let $\pi$ be a subspace of $\pg(n,q)$.
 \begin{enumerate}
  \item If $\pi \subset \mq^\eps(n,q)$, then $\pi^\perp \cap \mq^\eps(n,q) \cong \pi \mq^\eps(n-2\dim(\pi)-2,q)$.
  \item If $\eps = \pm 1$ and $\pi \cap \mq^\eps(n,q) \cong \Pi_{m_1} \mq^\delta(m_2,q)$, then $\pi^\perp \cap \mq^\eps(n,q) \cong \Pi_{m_1} \mq^{\delta \eps}(n-2m_1 - m_2 - 3,q)$.
  \item If $\eps = 0$, then $\pi \cap \mq(n,q) \cong \Pi_{m_1} \mq(m_2,q)$ if and only if $\pi^\perp \cap \mq(n,q) \cong \Pi_{m_1} \mq^{\pm}(n-2m_1-m_2-3,q)$.
  \item If $q$ is odd, then $\pi$ intersects $\mq^\eps(n,q)$ in a cone with vertex $\pi \cap \pi^\perp$ and base a non-degenerate quadric.
 \end{enumerate}
\end{res}

A quadric $\mq$ intersects every line that is not totally isotropic in at most 2 points.
Lines that contain 0, 1, or 2 points of $\mq$ are called \emph{passant}, \emph{tangent}, or \emph{secant}, respectively.

\subsubsection{Hermitian varieties}

Consider the polarity $\perp$ associated to the non-degenerate Hermitian form
\[
 B(x,y) = x_0 y_0^q + \ldots + x_n y_n^q
\]
defined on $\FF_{q^2}^{n+1}$.
The set of isotropic points of this polarity is called the \emph{Hermitian variety} and denoted as $\mh(n,q^2)$.
This gives us an embedded polar space of rank $\ceil {\frac n2}$.
The related form $\kappa(x) = B(x,x)$ is called \emph{pseudo-quadratic}.

\begin{res}[{\cite[Theorem 2.8]{hirschfeldthas}}]\label{Res:HermitianAnisotropicPoints}
 The number of points on $\mh(n,q^2)$ equals
 \[
  \frac 1 {q^2-1} \left( q^{n+1} + (-1)^n \right) \left( q^n - (-1)^n \right) =  q^n \frac{q^n-(-1)^n}{q+1} + \theta_{n-1}(q^2).
 \]
 Therefore the number of anisotropic points equals
 \[
  q^n \frac{q^{n+1} + (-1)^n}{q+1}.
 \]
\end{res}

\subsubsection{Symplectic polar spaces}

Symplectic polar spaces arise from polarities associated to non-degenerate alternating forms.
With respect to these forms, every point is isotropic.
Since we are interested in the geometry of anisotropic points, we will not discuss the symplectic polar spaces.

\subsection{Circle geometries}

Circle geometries (sometimes also called \emph{Benz planes}) are a class of point-line geometries.
The lines of a circle geometry are often called \emph{circles}.
For a survey on circle geometries, see e.g.\ Hartmann \cite{hartmann} or Delandtsheer \cite[\S 5]{delandtsheer}.

We first introduce the concept of a \emph{parallel relation} for a point-line geometry.
This is an equivalence relation on the point set; its equivalence classes are called \emph{parallel classes}.
We call two points \emph{parallel} if they are in the same parallel class of some parallel relation.

\begin{df}
 A \emph{circle geometry} is a point-line geometry $(\pp,\ml)$ with at most 2 parallel relations, such that the following properties hold:
\begin{enumerate}
 \item Given 3 pairwise non-parallel points, there is a unique circle containing these three points.
 \item Given a circle $c$, a point $P \in c$, and a point $Q \notin c$, not parallel with $P$, there is a unique circle through $P$ and $Q$, which is \emph{tangent} to $c$, i.e.\ it intersects $c$ only in $P$.
 \item Every parallel class contains a unique point of each circle.
 \item Two parallel classes from different parallel relations intersect in a unique point.
 \item Each circle contains at least 3 points.
 \item There exists a circle and a point not on this circle.
\end{enumerate}
A circle geometry with 0, 1, or 2 parallel relations is called a \emph{Möbius plane} (or \emph{inversive plane}), \emph{Laguerre plane}, or a \emph{Minkowski plane}, respectively.
\end{df}

\begin{rmk}
 In a Möbius plane, no two points are parallel.
 In this case, (1) should be read as ``Given 3 points, there is a unique circle containing these three points.''
 Likewise, for Möbius planes, in (2), $Q$ is allowed to be any point not on $c$, and (3) is a vacuous statement.
 Property (4) is only relevant for Minkowski planes.
\end{rmk}

It is a classic result that every circle in a finite circle geometry has the same number of points.
If this number is $q+1$, then $q$ is called the \emph{order} of the circle geometry.
In particular, a Möbius plane of order $q$ is a $3-(q^2+1,q+1,1)$ design.
The classical construction of a finite circle geometry of order $q$ is taking a quadric $\mq$ in $\pg(3,q)$ which is either $\mq^{\pm}(3,q)$ or $\Pi_0 \mq(2,q)$, taking as $\pp$ the points of this quadric, excluding the vertex in case of a degenerate parabolic quadric, and as $\ml$ the non-degenerate plane sections of the quadric.
Circle geometries arising in this way are called \emph{miquelian} since they are characterised by an extra regularity condition known as the theorem of Miquel, see e.g.\ \cite[\S 5.8, 5.10]{delandtsheer}.

The three types of miquelian circle geometries all have interesting alternative representations.
\begin{enumerate}
 \item Let $\pp$ be the set of points of $\pg(1,q^2)$.
 The projective line $\pg(1,q)$ can be considered to be a subset of $\pg(1,q^2)$ in a canonical way, namely the set of the points of $\pg(1,q^2)$ having a coordinate vector in $\FF_q^2$.
 The images of $\pg(1,q)$ under the action of $\PGL(2,q^2)$ are called the \emph{Baer sublines} of $\pg(1,q^2)$.
 Let $\ml$ be the set of Baer sublines of $\pg(1,q^2)$.
 Then the point-line geometry $(\pp,\ml)$ is isomorphic to the miquelian Möbius plane of order $q$.
 \item For any function $f:A \to B$ define its \emph{graph} to be the set $\sett{(x,f(x))}{ x \in A} \subset A \times B$.
 Let $\pp$ be the set $(\FF_q \cup \{\infty\}) \times \FF_q$, and for every polynomial $f(X) = a X^2 + b X + c$ of degree at most 2 define $f(\infty) = a$.
 Let $\ml$ be the set of graphs of the polynomials of $\FF_q[X]$ of degree at most 2, seen as functions $\FF_q \cup \{\infty\} \to \FF_q$.
 Then $(\pp,\ml)$ is isomorphic to the miquelian Laguerre plane of order $q$.
 \item Let $\pp$ be $\pg(1,q) \times \pg(1,q)$ and let $\ml$ be the set of graphs of the elements of $\PGL(2,q)$.
 Then $(\pp,\ml)$ is isomorphic to the miquelian Minkowski plane of order $q$.
\end{enumerate}

\subsection{Association schemes}
 \label{SubSec:Assoc}

In the literature, there is some variation in the definition of an association scheme.
In this paper we will use the most restrictive definition; other authors might call this a symmetric association scheme.
We will review the basic properties of association schemes.
These can be found for instance in 
\cite[\S\S 2.1-2.2]{bcn} or \cite[\S 3]{godsilmeagher}.

Association schemes have a combinatorial and an algebraic definition, which are equivalent.
We start with the combinatorial definition.
First, let us introduce some convenient notation.

\begin{df}
 Suppose that $\pp$ is a set, $X \in \pp$, and $R \subseteq \pp \times \pp$.
 Then we use the following notation.
 \[
  R(X) = \sett{Y \in \pp}{(X,Y) \in R}.
 \]
\end{df}

\begin{df}
 Consider a set $\pp$ and a partition $R_0, \ldots, R_d$ of $\pp \times \pp$.
 We call this partition a \emph{$d$-class association scheme} if it satisfies the following properties:
 \begin{itemize}
  \item $R_0 = \sett{(X,X)}{X \in \pp}$,
  \item every relation $R_i$ is symmetric in the sense that $R_i = \sett{(Y,X)}{(X,Y) \in R_i}$,
  \item for all $i, j \in \{0,\ldots,d\}$, and for every $X,Y \in \pp$, $|R_i(X) \cap R_j(Y)|$ only depends on which relation $R_k$ contains $(X,Y)$.
  We denote $|R_i(X) \cap R_j(Y)|$ by $p^k_{i,j}$ if $(X,Y) \in R_k$, and call the integers $p^k_{i,j}$ the \emph{intersection numbers}.
 \end{itemize}
\end{df}

The relations $R_i$ can be equivalently expressed by their \emph{adjacency matrices}
\[
 A_i: \pp \times \pp \to \RR: (X,Y) \mapsto \begin{cases}
  1 & \text{if } (X,Y) \in R_i, \\
  0 & \text{otherwise.}
 \end{cases}
\]
By $I$ and $J$ we denote the identity matrix and the all-one matrix, respectively.
The dimensions should be clear from context.
Then the algebraic definition of an association scheme is as follows.

\begin{df}
 \label{DfAssocMatrices}
 Consider a set of non-zero $\{0,1\}$-matrices $A_0, \dots, A_d$ defined on $\pp \times \pp$.
 We call this set a \emph{d-class association scheme} if
 \begin{itemize}
  \item $A_0 + \ldots + A_d = J$,
  \item $A_0 = I$,
  \item all matrices $A_i$ are symmetric,
  \item there exist integers $p^k_{i,j}$ called the \emph{intersection numbers} such that for all $i,j$
  \[
   A_i A_j = \sum_{k=0}^d p^k_{i,j} A_k.
  \]
  \end{itemize}
\end{df}

If $\ma = \{A_0, \dots, A_d\}$ is an association scheme, then the subspace of $\RR^{\pp \times \pp}$ spanned by the elements of $\ma$, which we denote as $\RR[\ma]$, is closed under matrix multiplication.
Hence, it constitutes an algebra with respect to the ordinary matrix multiplication, called the \emph{Bose-Mesner algebra} of $\ma$.

\begin{res}
 Let $\ma = \{A_0, \dots, A_d\}$ be an association scheme on the set $\pp$.
 Then $\RR[\ma]$ admits a basis $E_0, \dots, E_d$ such that
 \begin{itemize}
  \item $E_0 = \frac 1 {|\pp|} J$,
  \item the $E_j$ matrices are idempotent and pairwise orthogonal, i.e.\ $E_i E_j = \delta_{i,j} E_i$,
  \item for all $i,j \in \{0,\dots,d\}$ there exists a number $p_i(j)$ such that $A_i E_j = p_i(j) E_j$.
 \end{itemize}
\end{res}

Note in particular that the column spaces of the $E_j$ matrices, which we denote by $V_j$, form an orthogonal decomposition of $\RR^{\pp}$ that diagonalises all $A_i$ matrices.
The matrix $\mathbf P$ defined by $\mathbf P(j,i) = p_i(j)$ is called the \emph{matrix of eigenvalues} of $\ma$.
Note that $\mathbf P$ is the transition matrix between the $A_i$-basis and the $E_j$-basis.
The matrix $\mathbf Q = |\pp| \mathbf P^{-1}$ is called the \emph{matrix of dual eigenvalues} of $\ma$.
It is well known that the first column of both $\mathbf P$ and $\mathbf Q$ is the all-one vector.
In particular, this implies the following result.

\begin{res}
 \label{Res:SumEigenvalues}
 For each $j > 0$, $\sum_i p_i(j) = 0$.
\end{res}

Define $n_i = p^0_{i,i}$ to be the \emph{valency} of the $R_i$-relation, and define $m_j = \rk(E_j)$, which is the dimension of $V_j$.

\begin{res}
 \label{ResIdentityEigenvalueMatrices}
 Let $\Delta_n$ and $\Delta_m$ be the diagonal matrices with diagonal $(n_0, \dots, n_d)$ and $(m_0, \dots, m_d)$ respectively.
 Then
 \[
  \Delta_m \mathbf P = \mathbf Q^\top \Delta_n.
 \]
\end{res}

Now we describe a way to calculate the eigenvalues of the association scheme.
Define for each $i \in \{0,\dots,d\}$ the $\{0,\dots,d\} \times \{0,\dots,d\}$ matrix $B_i$ given by $B_i(k,j) = p^k_{i,j}$.
This matrix is called the \emph{intersection matrix} of $R_i$.
There exists a basis $w_0, \dots, w_d$ of $\RR^{d+1}$, unique up to reordering and rescaling, that diagonalises all the intersection matrices.

\begin{res}\label{ResComputationEigenvaluesScheme}
 Let $w_0, \dots, w_d$ be the columns of $\mathbf Q$.
 Up to rescaling, $w_0, \dots, w_d$ is the unique basis of $\RR^{d+1}$ such that for all $i,j \in \{0,\dots,d\}$
 \[
  B_i w_j = p_i(j) w_j.
 \]
\end{res}

Hence, simultaneously diagonalising the intersection matrices yields the eigenvalues of $\ma$.

\bigskip

An association scheme $\ma = \{R_0, \dots, R_d\}$ on $\pp$ is called \emph{imprimitive} if the union of some relations of $\ma$ is an equivalence relation, and this union is not one of the trivial equivalence relations $R_0$ or $\pp \times \pp$; else it is called \emph{primitive}.
If $\pp' \subset \pp$ is one the equivalence classes of such a union in a primitive association scheme, then the relations $R_i \cap (\pp' \times \pp')$ on $\pp'$, after removing the empty relations, constitute an association scheme.
Such a scheme is called a \emph{subscheme} of $\ma$.

If $\ma$ and $\ma'$ are association schemes on $\pp$, and every relation of $\ma'$ is a union of relations of $\ma$, then we call $\ma'$ a \emph{fusion scheme} of $\ma$ and $\ma$ a \emph{fission scheme} of $\ma'$.

\bigskip

A particular class of association schemes that received a lot of attention are the 2-class association schemes, since these are equivalent to  \emph{strongly regular graphs}.
Note that if $A_0, A_1, A_2$ is an association scheme, then $A_0 = I$ and $A_2 = J - I  - A_1$.
Hence, all information is contained in the matrix $A_1$.
All parameters of the association scheme can be calculated from the parameters $n_1$, $p^1_{1,1}$, and $p^2_{1,1}$.
Therefore, instead of discussing the association scheme and its parameters, it is common to only refer to the graph induced by relation $R_1$, and denote this as a $\SRG(|\pp|,n_1,p^1_{1,1},p^2_{1,1})$.

\bigskip

It is often desirable to give a combinatorial interpretation of the eigenspaces of an association scheme.
We can relate sets to eigenspaces as follows.

\begin{df}
 Given a set $S \subseteq \pp$, define its \emph{characteristic vector} $\chi_S \in \RR^\pp$ as
 \[
  \chi_S(X) = \begin{cases}
   1 &\text{if } X \in S, \\
   0 &\text{otherwise.}
  \end{cases}
 \]
 Its \emph{dual degree set} is the set
 \[
  \dds(S) = \sett{j \in \{1,\dots,d\}}{ E_j \chi_S \neq \zero}.
 \]
\end{df}

In other words, $\dds(S)$ tells us in which eigenspaces $V_j$ the vector $\chi_S$ has a non-zero component (but note that $0 \notin \dds(S)$ by definition despite $E_0 \chi_S = \frac{|S|}{|\pp|} \one$).

Sets with small dual degree sets are especially interesting.
Such sets can be found e.g.\ using a weighted version Hoffman's ratio bound, see e.g. \cite[Theorem 2.4.2]{godsilmeagher}.
A matrix $A$ is \emph{compatible} with a graph $(\pp,R)$ if the rows and columns of $A$ are labelled by $\pp$ and $A(X,Y) \neq 0$ only if $(X,Y) \in R$.
A set $S$ is a \emph{clique}, or \emph{coclique}, in the graph $(\pp,R)$ if any pair, respectively no pair, of elements of $S$ occurs in $R$.

\begin{res}[Delsarte's ratio bound]
 \label{Res:Delsarte}
 Let $R_i$ be a single relation in an association scheme on $\pp$ with adjacency matrix $A_i$.
 Suppose that $A_i \one = n \one$ and that $\tau$ is the smallest eigenvalue of $A_i$.
 If $S$ is clique in $(\pp,R_i)$, then
 \[
  |S| \leq \frac n{-\tau} + 1.
 \]
 In case of equality, $\chi_S$ is orthogonal to the $\tau$-eigenspace of $A_i$.
\end{res}

\begin{res}[Weighted Hoffman's ratio bound]
 \label{Res:Hoffman}
 Let $(\pp,R)$ be a graph and $A$ a compatible symmetric matrix.
 Suppose that $A \one = n \one$ and that $\tau < n$ is the smallest eigenvalue of $A$.
 If $S$ is a coclique in $(\pp,R)$, then
 \[
  |S| \leq \frac{|\pp|}{\frac{n}{-\tau}+1}.
 \]
 In case of equality, $\chi_S$ is contained in the span of $\one$ and the $\tau$-eigenspace of $A$.
\end{res}

We call a clique or coclique attaining equality in the appropriate above bound a \emph{Delsarte clique} or \emph{weighted Hoffman coclique} respectively.
A weighted Hoffman coclique in case the matrix $A$ is just the adjacency matrix of $R$ is simply called a \emph{Hoffman coclique}.

\bigskip

An \emph{automorphism} of an association scheme is a permutation $g$ of the set $\pp$ such that $(X,Y) \in R_i \iff (X^g,Y^g) \in R_i$ for all $X,Y \in \pp$ and $i \in \{0,\dots,d\}$.
Automorphsims can be useful to find sets of characteristic vectors generating certain eigenspaces.

\begin{lm}
 \label{Lm:SpanningOrbit}
 Let $S$ be a non-empty subset of $\pp$.
 Let $G$ be a group of automorphisms such that for any $X,Y \in \pp$, the number $|\sett{g \in G}{X,Y \in S^g}|$ only depends on the relation containing $(X,Y)$.
 Then $\vspan{\chi_{S^g} \, \| \, g \in G} = \vspan{V_j \, \| \, j \in \dds(S) \cup \{0\}}$.
\end{lm}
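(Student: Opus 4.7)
The plan is to encode the entire $G$-orbit of $\chi_S$ in the single symmetric matrix
\[
 M = \sum_{g \in G} \chi_{S^g} \chi_{S^g}^\top,
\]
whose $(X,Y)$-entry equals $|\{g \in G : X,Y \in S^g\}|$. By hypothesis this number depends only on the relation containing $(X,Y)$, so $M$ lies in the Bose-Mesner algebra $\RR[\ma]$ and therefore decomposes as $M = \sum_{j=0}^d \lambda_j E_j$ for some scalars $\lambda_j$. I would then exploit $M$ in two complementary ways. On one hand, $M$ is positive semi-definite with $v^\top M v = \sum_{g} (\chi_{S^g}^\top v)^2$, so $\ker M$ is exactly the orthogonal complement of $\vspann{\chi_{S^g}}{g \in G}$, and hence the column space of $M$ equals $\vspann{\chi_{S^g}}{g \in G}$. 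On the other hand, since the $E_j$ are pairwise orthogonal symmetric idempotents with column spaces $V_j$, the column space of $M$ also equals $\bigoplus_{j \,:\, \lambda_j \neq 0} V_j$. So the whole lemma reduces to identifying the set $\{j \,:\, \lambda_j \neq 0\}$.

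To compute the $\lambda_j$ I would use $E_j M = \lambda_j E_j$ together with $E_j^2 = E_j$ and $E_j^\top = E_j$ to obtain
\[
 \lambda_j m_j = \operatorname{tr}(E_j M) = \sum_{g \in G} \chi_{S^g}^\top E_j \chi_{S^g} = \sum_{g \in G} \|E_j \chi_{S^g}\|^2.
\]
Now because every $g \in G$ is an automorphism of $\ma$, its permutation matrix $P_g$ commutes with each $A_i$, hence with every element of $\RR[\ma]$, and in particular with $E_j$. Combined with $P_g \chi_S = \chi_{S^g}$ and the orthogonality of $P_g$, this gives $\|E_j \chi_{S^g}\| = \|P_g E_j \chi_S\| = \|E_j \chi_S\|$ for every $g \in G$. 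Consequently $\lambda_j m_j = |G| \cdot \|E_j \chi_S\|^2$, which is non-zero precisely when $E_j \chi_S \neq \zero$. For $j \geq 1$ this matches the definition of $\dds(S)$, and for $j=0$ one has $E_0 \chi_S = \tfrac{|S|}{|\pp|}\one \neq \zero$ since $S$ is non-empty; hence $\{j \,:\, \lambda_j \neq 0\} = \dds(S) \cup \{0\}$, as required.

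The argument is essentially clean linear algebra inside $\RR[\ma]$, and I do not expect a real obstacle. The two points that require care are verifying that $M$ genuinely lies in $\RR[\ma]$ (which is exactly what the hypothesis on $|\{g : X,Y \in S^g\}|$ is designed to ensure) and remembering to treat $j = 0$ separately, because the convention in the definition of $\dds(S)$ excludes it even though $E_0 \chi_S$ is always non-zero for non-empty $S$.
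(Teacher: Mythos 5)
Your proof is correct and follows essentially the same route as the paper: both form the matrix with $(X,Y)$-entry $|\{g \in G : X,Y \in S^g\}|$ (your $M$ is exactly the paper's $MM^\top$), observe it lies in the Bose--Mesner algebra, and identify its column space simultaneously with $\vspan{\chi_{S^g} \, \| \, g \in G}$ and with the sum of the eigenspaces carrying nonzero coefficients. The only difference is cosmetic: where the paper tersely invokes that automorphisms preserve the dual degree set, you make this explicit via the trace computation $\lambda_j m_j = |G|\,\lVert E_j\chi_S\rVert^2$ and the commutation of the permutation matrices with the idempotents, which is a welcome expansion of the same idea.
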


\begin{proof}
 Consider the matrix $M \in \RR^{\pp \times G}$ where the column indexed by $g$ equals $\chi_{S^g}$.
 Then
 \[
  \vspan{\chi_{S^g} \, \| \, g \in G} = \ColSp(M) = \ColSp(M M^\top).
 \]
 Note that $M M^\top(X,Y) = |\sett{g \in G}{X,Y \in S^g}|$.
 By our assumptions, $M M^\top$ lies in the Bose-Mesner algebra of $\ma$.
 Therefore, there are coefficients $a_0, \dots, a_d$ with $M M^\top = \sum_j a_j E_j$ and $\ColSp(M M^\top) = \vspan{V_j \, \| \, a_j \neq 0}$.
 Since $G$ is a group of automorphisms, $S^g$ has the same dual degree set for all $g \in G$.
 It follows that $\sett{j}{a_j \neq 0} = \dds(S) \cup \{0\}$.
 %
\end{proof}

\section{The known association schemes on anisotropic points}
 \label{Sec:Known}

\subsection{Schurian schemes}
 \label{SubSecSchurianSchemes}

Let $G$ be a group acting on a finite set $\pp$.
Then $G$ naturally acts on $\pp \times \pp$, and the orbits of the latter group action are called the \emph{orbitals} of $G$ acting on $\pp$.
We say that $G$ acts \emph{generously transitively} on $\pp$ if $G$ acts transitively and the orbitals are symmetric.
It is well known that in this case the orbitals of the group action constitute an association scheme.
Such an association scheme is usually called \emph{Schurian}.

Given a (pseudo-)quadratic form $\kappa$ on a vector space $V$, an \emph{isometry} is a linear transformation $f \in \GL(V)$ such that $\kappa \circ f = \kappa$.
Isometries naturally act on the set of anisotropic points of a quadric or Hermitian variety.
This action is not necessarily transitive on the set of all anisotropic points.
Let $B$ denote the bilinear or sesquilinear form associated to $\kappa$.
If $q$ is odd and $\kappa$ is a quadratic form, then the action of the isometries has two orbits on the anisotropic points, namely the points with $\kappa(X) = S_q$ and the points with $\kappa(X) = \ns_q$.
If $q$ is even and $\kappa$ defines a parabolic quadric, then every isometry fixes the nucleus of the quadric, but acts transitively on the set of the other anisotropic points.
In the other cases, the isometries do act transitively on the anisotropic points.

The action of the isometries on one of its orbits of anisotropic points gives rise to an association scheme.
These association schemes were studied in \cite{bannaihaosong} for the non-degenerate quadrics, except for the case where $q$ and $n$ are even, and in \cite{wei83,bannaisonghaowei} for the non-degenerate Hermitian varieties.

Take one orbit of the action of the isometries on the anisotropic points.
Then the relation containing $(X,Y)$ depends on the value $\frac{B(x,y)B(y,x)}{\kappa(x) \kappa(y)}$, with $x$ and $y$ coordinate vectors of $X$ and $Y$ respectively.
The number of relations grows in $q$, which makes a general expression for the matrix of eigenvalues of such an association scheme quite involved.
Therefore, we will not include these matrices here.

\subsection{Geometrically defined schemes from quadrics in even characteristic}

Let $q$ be even and let $\pp$ be the set of anisotropic points of a quadric $\mq^\eps(n,q)$, excluding the nucleus $N$ in case $n$ is even.
Let $R_0$ as usual denote the identity relation.
If $X$ and $Y$ are distinct points of $\pp$, let them be in relation $R_1$, $R_2$, or $R_3$ if $|\vspan{X,Y} \cap \mq^\eps(n,q)|$ is 1, 2 or 0, respectively.
In case $n$ is even, split $R_1$ into two parts,
\begin{align*}
 R_{1a} = \sett{(X,Y) \in R_1}{ N \notin \vspan{X,Y} }, &&
 R_{1n} = \sett{(X,Y) \in R_1}{ N \in \vspan{X,Y} }.
\end{align*}
We note that in case $n=2$, relation $R_{1a}$ is empty.
In case $q=2$, lines contain 3 points, hence relations $R_{1n}$ and $R_2$ are empty.
\begin{itemize}
 \item If $n$ is odd, then the non-empty relations in $R_0,R_1,R_2,R_3$ constitute an association scheme.
 \item If $n$ is even, then the non-empty relations in $R_0,R_{1a},R_{1n},R_2,R_3$ constitute an association scheme.
\end{itemize}
This was first stated in \cite[Theorem 12.1.1]{bcn}, although the authors forgot to split relation $R_1$ into two parts in case $n>2$ is even.
This mistake was later corrected in \cite{vanhove20}. 
The matrices of eigenvalues and the dimension of the corresponding eigenspaces can be found in \cite{vanhove20} or \cite[\S 3.1.1]{brouwervanmaldeghem}.

In case $n=2m+1$ is odd, the matrices are given by

\begin{align*}
 \mathbf P & = \begin{pmatrix}
  1 & q^{2m}-1 & \frac12 q^m \left( q^m + \eps \right)(q-2) & \frac12 q^{m+1} \left(q^m- \eps \right) \\[0.5em]
  1 & \eps q^{m-1} - 1 & \frac12 \eps q^{m-1} (q+1)(q-2) & - \frac12 \eps q^m (q-1) \\[0.5em]
  1 & -\eps q^m - 1 & 0 & \eps q^m \\[0.5em]
  1 & \eps q^m -1 & -\eps q^m & 0
 \end{pmatrix} \\
 \mathbf Q &= \begin{pmatrix}
 1 & q^2 \frac{ q^{2m}-1 }{ q^2-1 } & \frac{q}{2(q+1)} \left( q^{m} - \eps \right) \left( q^{m+1} - \eps\right) & \frac{q-2}{2(q-1)}\left( q^m + \eps \right) \left( q^{m+1} - \eps \right) \\[0.5em]
 1 & \eps  q^2 \frac{q^{m-1} - \eps}{q^2-1} & -\frac12 \eps q \frac{q^{m+1} - \eps}{q+1} & \frac12 \eps (q-2) \frac{q^{m+1} - \eps}{q-1} \\[0.5em]
 1 & \eps q \frac{q^m - \eps}{q-1} & 0 & -\eps \frac{q^{m+1} - \eps}{q-1} \\[0.5em]
 1 & -\eps q \frac{q^m + \eps}{q+1} & \eps \frac{q^{m+1} - \eps}{q+1} & 0
 \end{pmatrix}
\end{align*}

\begin{rmk}
 The association schemes arising from $\mq^\eps(3,q)$ for $\eps = \pm 1$ have interesting alternative interpretations.
 Every plane in $\pg(3,q)$ intersects $\mq^\eps(3,q)$ either in a $\mq(2,q)$ or in a $\Pi_0 \mq^\eps(1,q)$.
 The polarity associated to the quadric maps the set of anisotropic points to the set of planes with non-degenerate intersections.
 Therefore, in an alternative interpretation of the association scheme, the vertices of the association scheme are the circles of the miquelian Möbius (if $\eps=-1$) or Minkowski (if $\eps=+1$) plane, and given a pair of circles $(c_1,c_2)$, $|c_1 \cap c_2|$ determines the relation containing $(c_1, c_2)$.
 In particular, this means that the association scheme arising from $\mq^-(3,q)$ can be seen as an association scheme on the Baer sublines of $\pg(1,q^2)$, and the association scheme arising from $\mq^+(3,q)$ can be seen as an association scheme on $\PGL(2,q)$.
 The eigenvalues of this association scheme on $\PGL(2,q)$ were also determined by Bannai \cite[p.\ 172]{bannai1991}.
\end{rmk}

In case $n=2m$ is even, the matrices are given by

\begin{align*}
 \mathbf P & = \begin{pmatrix}
  1 & q \left( q^{2m-2}-1 \right) & q-2 & \frac12 q^{2m-1}(q-2) & \frac 12 q^{2m} \\[0.5em]
  1 & -\left( q^{m-1} + 1 \right) (q-1) & q-2 & \frac12 q^{m-1}(q-2) & \frac12 q^m \\[0.5em]
  1 & \left( q^{m-1} - 1 \right) (q-1) & q-2 & -\frac12 q^{m-1}(q-2) & -\frac12 q^m \\[0.5em]
  1 & 0 & -1 & \frac12 q^m & -\frac12 q^m \\[0.5em]
  1 & 0 & -1 & -\frac12 q^m & \frac12 q^m
 \end{pmatrix} \\ \comments{
 \mathbf Q & = \begin{pmatrix}
  1 & \frac12 q \left( q^m + 1 \right) \frac{q^{m-1} - 1}{q-1} & \frac12 q \left( q^{m-1} + 1\right) \frac{q^m - 1}{q-1} & \frac 12 (q-2) \frac{q^{2m}-1}{q-1} & \frac 12 (q-2) \frac{q^{2m}-1}{q-1} \\[0.5em]
  1 & - \frac12 \left( q^m + 1 \right) & \frac12 \left( q^m - 1 \right) & 0 & 0 \\[0.5em]
  1 & \frac12 q \left( q^m + 1 \right) \frac{q^{m-1} - 1}{q-1} & \frac12 q \left(q^{m-1} + 1\right) \frac{q^m - 1}{q-1} & -\frac 12 \frac{q^{2m}-1}{q-1} & -\frac 12 \frac{q^{2m}-1}{q-1} \\[0.5em]
  1 & \frac 12 \frac{q^m + 1}{q^{m-1}} \frac{q^{m-1}-1}{q-1} & -\frac 12 \frac{q^{m-1}+1}{q^{m-1}} \frac{q^m - 1}{q-1} & \frac 1 {2q^{m-1}} \frac{q^{2m-1}-1}{q-1} & -\frac{1}{2q^{m-1}} \frac{q^{2m-1}-1}{q-1} \\[0.5em]
  1 & \frac12 \frac{q^m + 1}{q^{m-1}} \frac{q^{m-1} - 1}{q-1} & -\frac12 \frac{q^{m-1} + 1}{q^{m-1}} \frac{q^m - 1}{q-1} & -\frac 12 \frac{q-2}{q^m} \frac{q^{2m}-1}{q-1} & \frac{q-2}{2q^m} \frac{q^{2m}-1}{q-1}
 \end{pmatrix}
 \\ }
 \mathbf Q & =\begin{pmatrix}
  1 & \frac12 q \left( q^m + 1 \right) \theta_{m-2}(q) & \frac12 q \left( q^{m-1} + 1\right) \theta_{m-1}(q) & \frac 12 (q-2) \theta_{2m-1}(q) & \frac 12 (q-2) \theta_{2m-1}(q) \\[0.5em]
  1 & - \frac12 \left( q^m + 1 \right) & \frac12 \left( q^m - 1 \right) & 0 & 0 \\[0.5em]
  1 & \frac12 q \left( q^m + 1 \right) \theta_{m-2}(q) & \frac12 q \left(q^{m-1} + 1\right) \theta_{m-1}(q) & -\frac 12 \theta_{2m-1}(q) & -\frac 12 \theta_{2m-1}(q) \\[0.5em]
  1 & \frac 12 \frac{q^m + 1}{q^{m-1}} \theta_{m-2}(q) & -\frac 12 \frac{q^{m-1}+1}{q^{m-1}} \theta_{m-1}(q) & \frac 1 {2q^{m-1}} \theta_{2m-1}(q) & -\frac{1}{2q^{m-1}} \theta_{2m-1}(q) \\[0.5em]
  1 & \frac12 \frac{q^m + 1}{q^{m-1}} \theta_{m-2}(q) & -\frac12 \frac{q^{m-1} + 1}{q^{m-1}} \theta_{m-1}(q) & -\frac 12 \frac{q-2}{q^m} \theta_{2m-1}(q) & \frac12\frac{q-2}{q^m} \theta_{2m-1}(q)
 \end{pmatrix}
\end{align*}

\subsection{Geometrically defined schemes from quadrics in odd characteristic}
 \label{SubSecGeomSchemesOdd}

Let $q$ be odd, and let $\mq^\eps(n,q)$ be a non-degenerate quadric in $\pg(n,q)$ with quadratic form $\kappa$ and polarity $\perp$.
As discussed in \Cref{SubSecSchurianSchemes}, there are two classes of anisotropic points, depending on whether $\kappa(X) = S_q$ or $\kappa(X) = \ns_q$.
Let $\pp$ denote one of these classes, and say that two distinct points $X,Y$ of $\pp$ are in relation $R_1$, $R_2$, or $R_3$ when $| \vspan{X,Y} \cap \mq^\eps(n,q) |$ equals 1,2, or 0, respectively.


If $\eps = \pm 1$, the two classes of anisotropic points of $\mq^\eps(n,q)$ are interchangeable.
If $n=3$, then Brouwer, Cohen, and Neumaier \cite[Theorem 12.2.1]{bcn} proved that $(\pp,\{R_0,R_1,R_2,R_3\})$ constitutes an association scheme.
In case $\eps=-1$, the intersection matrices were determined by Fisher, Pentilla, Praeger, and Rolye \cite[p.\ 335]{FisherPenttilaPraegerRoyle}, and in case $\eps=+1$, the matrix of eigenvalues was determined by the first author \cite[\S 3]{adriaensen2022stability}.


If $\eps=0$, then $(\pp,\{R_0,R_1,R_2 \cup R_3\})$ constitutes an association scheme, as was observed by Wilbrink \cite[\S 7.D]{brouwervanlint}.
In this case, the two classes of anisotropic points are not interchangeable.
The class containing an anisotropic point $X$ depends on whether $X^\perp$ intersects $\mq(n,q)$ in a hyperbolic or elliptic quadric.
Suppose that $\pp$ consists of the points whose polar hyperplane intersects $\mq(n,q)$ in a $\mq^\eps(n-1,q)$.
Then $R_1$ determines a 
\[
 \SRG \left( \frac 12 q^\frac n2 \left( q^\frac n2 + \eps \right), \left( q^\frac n2 - \eps \right) \left( q^{\frac n2 - 1} + \eps \right), 2(q^{n-2} - 1) + \eps q^{\frac n2 - 1}(q-1), 2 q^{\frac n2 - 1} \left( q^{\frac n2 - 1} + \eps \right) \right).
\]
The spectrum of this graph is given in \Cref{Tab:SpecSRGParabolic}. 
\begin{table}[h!]
 \centering
 \begin{tabular}{c || c | c | c}
  Eigenvalue & $\left( q^\frac n2 - \eps \right) \left( q^{\frac n2 - 1} + \eps \right)$ & $-\eps q^{\frac n2 -1} - 1$ & $\eps (q-2)q^{\frac n2 - 1} - 1$ \\ \hline
  Multiplicity & 1 & $\frac {q-2}2 \theta_{n-1}(q)$ & $\frac q 2  \left( q^\frac n2 -\eps \right) \frac{q^{\frac n2 - 1}+\eps}{q-1}$
 \end{tabular}
 \caption{Spectrum of a strongly regular graph related to the parabolic quadrics.}
 \label{Tab:SpecSRGParabolic}
\end{table}

By applying the polarity, we can also interpret this association scheme as being defined on the hyperplanes intersecting $\mq(n,q)$ in a $\mq^\eps(n-1,q)$, where two distinct hyperplanes $\pi$ and $\rho$ are in relation $R_1$ if and only if $\pi \cap \rho \cap \mq(n,q)$ is a degenerate quadric.
This also yields a strongly regular graph in case $q$ is even, where the same formulae for the parameters apply.
More information about these graphs can be found in \cite[\S 3.1.4]{brouwervanmaldeghem}.

\subsection{Geometrically defined schemes from Hermitian varieties}
 \label{Sec:NU}

Every line of $\pg(n,q^2)$ is either contained in the Hermitian variety $\mh(n,q^2)$ or intersects it in 1 or $q+1$ points.
We say that two distinct anisotropic points $X$ and $Y$ are in relation $R_1$ or $R_2$ if $\vspan{X,Y}$ intersects $\mh(n,q^2)$ in 1 or $q+1$ points, respectively.
These relations, together with the identity, define an association scheme.
The strongly regular graph defined by $R_1$ is denoted as $NU(n+1,q^2)$.
This scheme was first described by Chakravarti \cite{chakravarti71} for $n = 2,3$.
The construction in general dimension can be found e.g.\ in \cite[\S 3.1.6]{brouwervanmaldeghem}.
Define $\eps = (-1)^n$.
The matrices of this scheme are given by
\begin{align*}
 \mathbf P &= \begin{pmatrix}
  1 & \left( q^n - \eps \right) \left( q^{n-1} + \eps \right) & q^{n-1} \frac{q^n - \eps}{q+1} (q^2-q-1) \\[0.5em]
  1 & - \eps q^{n-1} - 1 & \eps q^{n-1} \\[0.5em]
  1 & \eps q^{n-2} (q^2-q-1) - 1 & -\eps q^{n-2} (q^2-q-1)
 \end{pmatrix} \\
 \mathbf Q & = \begin{pmatrix}
  1 & \frac{q^2-q-1}{(q+1)(q^2-1)}\left( q^{n+1} + \eps \right) \left( q^n - \eps \right) & \frac{q^3}{(q+1)(q^2-1)} \left( q^{n-1} + \eps \right)\left( q^n - \eps \right) \\[0.5em]
  1 & -\eps \frac{q^2-q-1}{(q+1)(q^2-1)}\left( q^{n+1} + \eps \right) & \eps \frac{q^2-q-1}{(q+1)(q^2-1)}\left( q^{n+1} + \eps \right) - 1 \\[0.5em]
  1 & \eps q^2 \frac{q^{n-1} + \eps }{q^2-1} - 1 & - \eps q^2 \frac{q^{n-1} + \eps }{q^2-1}
 \end{pmatrix}
\end{align*}

If $X$ and $Y$ are anisotropic points and $X \perp Y$, then $(X,Y) \in R_2$.
This allows us to split $R_2$ into the relations $R_{2\perp} = \sett{(X,Y) \in R_2}{X \perp Y}$ and $R_{2\not\perp} = R_2 \setminus R_{2\perp}$.
Gordon and Levingston \cite[p.\ 264]{gordonlevingston} observed that in case $n=2$, $R_0,R_1,R_{2\perp},R_{2\not\perp}$ constitutes an association scheme.
We will prove in Section \ref{SecHerm} that this holds in general dimension.
In particular this forms a fission scheme of the association scheme related to the stongly regular graph $NU(n+1,q^2)$.

\begin{prop} \label{Prop:FissionHerm}
 For general values of $n \geq 2$, the above defined relations $R_0, R_1, R_{2\perp}, R_{2 \not\perp}$ constitute an association scheme on the anisotropic points of $\mh(n,q^2)$.
\end{prop}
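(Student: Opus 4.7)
The plan is to verify the algebraic definition of an association scheme (\Cref{DfAssocMatrices}) for the four symmetric $\{0,1\}$-matrices $I$, $A_1$, $D$, $A_2-D$, where $A_1,A_2$ are the adjacency matrices of the $3$-class scheme of \Cref{Sec:NU} and $D$ is the adjacency matrix of $R_{2\perp}$. They are all non-zero (for $n\geq 2$ and $q\geq 3$), $I+A_1+D+(A_2-D)=J$, and $I$ is present; the crux is therefore to show that the linear span of these four matrices is closed under matrix multiplication, or equivalently that each $|R_i(X)\cap R_j(Y)|$ is constant on each $R_k$ of the $4$-class partition.

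The key preliminary remark is that $D$ commutes with $A_1$ and $A_2$. Indeed, all three matrices lie in the Bose--Mesner algebra of the larger Schurian scheme from \Cref{SubSecSchurianSchemes}, whose orbitals are indexed by the isometry-invariant
\[
 \gamma(X,Y)=\frac{B(x,y)B(y,x)}{\kappa(x)\kappa(y)}\in \FF_q,
\]
where $R_1$ is the relation $\gamma=1$, $R_{2\perp}$ is the relation $\gamma=0$, and $R_{2\not\perp}$ is the union of the remaining Schurian orbitals corresponding to $\gamma\in\FF_q\setminus\{0,1\}$; this Bose--Mesner algebra is commutative because $\gamma$ is symmetric in $X,Y$. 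Granted commutativity and the closure of the $3$-class scheme, closure of the $4$-class span reduces to showing that $D^2$ and $A_1D$ are linear combinations of $I,A_1,D,A_2-D$: the remaining products can then be obtained algebraically, e.g.\ $DA_2=dJ-D-A_1D$ using the regularity $D\one=d\one$.

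For $D^2$: direct counting gives $(D^2)(X,Y)=|\vspan{X,Y}^\perp\cap\pp|$ for $X\neq Y$, and the isomorphism class of $\vspan{X,Y}^\perp\cap\mh(n,q^2)$ depends only on the rank of the induced Hermitian form on $\vspan{X,Y}$, hence only on whether the line is tangent ($R_1$) or secant ($R_2$). Thus $D^2$ already lies in the $3$-class Bose--Mesner algebra. For $A_1D$: parametrize the tangent lines through $X$ by their tangent points $T\in X^\perp\cap\mh(n,q^2)$, write an anisotropic $Z\in\vspan{X,T}$ as $Z=\alpha X+\beta T$ with $\alpha\neq 0$, and impose $Z\perp Y$, giving $\alpha B(x,y)+\beta B(t,y)=0$. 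If $(X,Y)\in R_1\cup R_{2\not\perp}$, so $B(x,y)\neq 0$, then each $T\notin Y^\perp$ yields a unique admissible $Z$; if $(X,Y)\in R_{2\perp}$, so $B(x,y)=0$, then only $T\in Y^\perp$ contribute, and each such $T$ yields $q^2-1$ valid anisotropic $Z$. In every case the count is expressible purely in terms of $|\mh(n-1,q^2)|$ and $|\vspan{X,Y}^\perp\cap\mh(n,q^2)|$, which depend only on the $4$-class relation containing $(X,Y)$.

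The main obstacle I anticipate is the \emph{a priori} worry that the $A_1D$ count on $R_{2\not\perp}$ might vary across the $q-2$ Schurian orbitals it bundles together, since no obvious group action fuses those orbitals. What saves the day is a Witt-type rigidity: for any $(X,Y)\in R_2$ the restricted form on $\vspan{X,Y}$ is non-degenerate, hence $\vspan{X,Y}^\perp$ carries a non-degenerate Hermitian form of fixed dimension, and over $\FF_{q^2}$ this determines $\vspan{X,Y}^\perp\cap\mh(n,q^2)$ up to isomorphism; in particular its cardinality is always $|\mh(n-2,q^2)|$ regardless of $\gamma(X,Y)$. Once this is noted, all the intersection numbers reduce to the explicit Hermitian counting formulas of \Cref{Res:HermitianAnisotropicPoints} applied to lower-dimensional polar spaces, and the closure computation goes through cleanly.
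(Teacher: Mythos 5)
Your proposal is correct, but it proves the closure in a genuinely different way from the paper. The paper (\Cref{SecHerm}) works with the single matrix $A_{2\perp}=D$: it computes $D^2$ explicitly as a combination of $I,A_1,A_2$, notes $J$ lies in the algebra generated by $D$ (regular connected graph), so all four matrices $I,A_1,D,A_2-D$ lie in that algebra, and then shows this algebra is exactly $4$-dimensional by determining the spectrum of $D$ --- the five candidate eigenvalues $\pm q^{n-1},\pm q^{n-2}$ and the valency are pinned down via $\Tr(D)=0$ and a triangle count for $\Tr(D^3)$, and one multiplicity turns out to be $0$. You instead verify closure of the $4$-dimensional span directly: you observe that $A_1,D,A_2$ all sit in the (commutative) Bose--Mesner algebra of the Schurian scheme of \Cref{SubSecSchurianSchemes}, reduce closure to the two products $D^2$ and $A_1D$, and compute both geometrically, with the key point (same as the paper's for $D^2$) that $\left|\vspan{X,Y}^\perp\cap\mh(n,q^2)\right|$ and $\left|\vspan{X,Y}^\perp\cap\pp\right|$ depend only on whether $\vspan{X,Y}$ is tangent or secant; your tangent-point parametrisation of $A_1D$ is sound, including the case split on $B(x,y)=0$. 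Two small remarks: the commutativity you invoke is justified because the isometry group acts generously transitively (by Witt's theorem the orbitals are exactly the level sets of the symmetric invariant $\gamma$), but you do not really need it --- once $A_1D$ is shown to be constant on the four relations it is a combination of symmetric matrices, hence symmetric, so $DA_1=(A_1D)^\top=A_1D$ comes for free; and your restriction to $q\ge 3$ matches the paper's \Cref{Rmk:HermOrtho}, since $R_{2\not\perp}=\emptyset$ for $q=2$. What each route buys: yours is more elementary and purely combinatorial, avoiding any eigenvalue or multiplicity computation, at the cost of the extra count $A_1D$; the paper's spectral route is what simultaneously produces the exact spectrum of the orthogonality graph, which is the content of \Cref{Thm:EigOrtho}~(\ref{Thm:EigOrtho:Herm}) and is needed later, so the proposition there falls out as a byproduct.
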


\begin{rmk}
 \label{Rmk:HermOrtho}
 \begin{enumerate}
  \item \label{Rmk:HermOrtho:Q=2} One should be cautious in case $q=2$.
 In that case $R_{2 \not \perp} = \emptyset$.
 In case $n = 2$, the graph corresponding to relation $R_{2\perp}$ is not connected, but consists of 4 disjoint triangles.
  \item \label{Rmak:HermOrtho:Brouwer} Ferdinand Ihringer privately communicated to us that Andries Brouwer independently proved \Cref{Prop:FissionHerm} in a work in progress.
 \end{enumerate}
\end{rmk}

We postpone the proof of this proposition to \Cref{SecHerm}.

\section{Geometrically defined schemes from quadrics in odd dimension and odd characteristic}
 \label{Sec:NewScheme}

Let $q$ be an odd prime power.
Let $n$ be odd, and let $\mq$ denote $\mq^\eps(n,q)$, with $\eps \in \set{\pm 1}$.
Let $B$, $\kappa$, and $\perp$ denote respectively the bilinear form, quadratic form, and polarity associated to $\mq$.
Let $\pp$ denote the set of anisotropic points.
We partition $\pp \times \pp$ into relations $R_i$, where $R_0 = \sett{(X,X)}{X \in \pp}$ is the identity relation, and the relation between two distinct points $X$ and $Y$ is based on how $\vspan{X,Y}$ intersects $\mq$, and on $\kappa(X) \cdot \kappa(Y)$.
The definition of the relations can be found in Table \ref{tab:rel}.

\begin{table}[!ht]
    \centering
    \begin{tabular}{c||c|c}
      & $| \vspan{X,Y} \cap \mq |$ & $\kappa(X) \cdot \kappa(Y)$ \\ \hline \hline
     $R_1$ & 1 & $S_q$ \\
     $R_2$ & 2 & $S_q$ \\
     $R_3$ & 0 & $S_q$ \\ [.2em] \hline & &\\ [-1em]
     $R_4$ & 2 & $\ns_q$ \\
     $R_5$ & 0 & $\ns_q$
    \end{tabular}
    \caption{The relations of the association scheme.}
    \label{tab:rel}
\end{table}

Note that the relation given by $|\vspan{X,Y} \cap \mq| = 1$ and $\kappa(X) \times \kappa(Y) = \ns_q$ is missing.
This is due to the fact that this relation is always empty, or in other words due to the fact that 
\(
 |\vspan{X,Y} \cap \mq| = 1 \implies \kappa(X) \cdot \kappa(Y) = S_q
\). Note that these relations are symmetric.

Our goal is to prove that the relations from Table \ref{tab:rel} together with $R_0$ constitute an association scheme on $\pp$, and to find the matrices of eigenvalues and dual eigenvalues.

Recall that
for $i \in \{0, \ldots, 5\}$ and $X \in \pp$
\[
 R_i(X) = \sett{Y \in \pp}{(X,Y) \in R_i}.
\]

\begin{df}
    Given $(X,Y)\in R_{k}\subseteq\pp\times\pp$ we denote the size of $R_i(X) \cap R_j(Y)$ by $p^{k}_{i,j}(X,Y)$.
\end{df}

In a series of lemmas we will derive the values of all these $p^{k}_{i,j}(X,Y)$. It will be shown that they are independent from the chosen pair $(X,Y)$, so we can omit $(X,Y)$ in the notation $p^{k}_{i,j}(X,Y)$. To make to lemmas and their proofs less notation-heavy, we will already use $p^{k}_{i,j}$. Since the relations $R_{k}$ are symmetric we know that $p^{k}_{i,j}(X,Y) = p^{k}_{j,i}(Y,X)$.

\begin{rmk}\label{Rem:OrthSchemeForq=3}
 If $q=3$, then relation $R_2$ is empty. For this case the proofs of this section show that $\{R_0,R_1,R_3,R_4,R_5\}$ forms a 4-class association scheme. Note that in particular all values $p^{k}_{i,j}$ with $2\in\{i,j\}$ and $k\neq2$ have a factor $q-3$. Also, the dimension of the eigenspace $V_4$ for the 5-class association scheme has a factor $q-3$. So, the $\mathbf{P}$ and $\mathbf{Q}$ matrix for the 4-class association scheme that we find for $q=3$, are found by removing the column and row corresponding to the relation $R_2$ and the eigenspace $V_4$ in the matrices that we obtain in \Cref{SecPQmatrix}.
\end{rmk}

\subsection{Some general counting arguments}

\begin{df}
 For each $X \in \pp$, define $n_i(X) = |R_i(X)|$.
\end{df}

The number $n_i(X)$ is independent of $X$.
This follows for example from the fact that the projective orthogonal group stabilising $\mq$ acts transitively on $\pp$ and respects the relations $R_i$.
Hence, we denote these numbers just as $n_i$.

\begin{lm}
 \label{LmValencies}
 \begin{align*}
  n_1 &=  q^{n-1}-1\\
  n_2 &=  \frac 14 q^{\frac{n-1}2} \left(q^{\frac{n-1}2} + \eps \right) (q-3)\\
  n_3 &=  \frac 14 q^{\frac{n-1}2} \left(q^{\frac{n-1}2} - \eps \right) (q-1)\\
  n_4 &=  \frac 14 q^{\frac{n-1}2} \left(q^{\frac{n-1}2} + \eps \right) (q-1)\\
  n_5 &=  \frac 14 q^{\frac{n-1}2} \left(q^{\frac{n-1}2} - \eps \right) (q+1)
 \end{align*}
\end{lm}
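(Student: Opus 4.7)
The plan is to fix an anisotropic point $X$, classify the $\theta_{n-1}(q)$ lines through $X$ as tangent, secant, or passant with respect to $\mq$, and then on each line count the anisotropic points according to their quadratic type. Since $X$ is anisotropic, the restriction of $\kappa$ to the $(n-1)$-dimensional polar hyperplane $X^\perp$ remains non-degenerate, so $X^\perp \cap \mq$ is a non-degenerate quadric in $X^\perp$; as its ambient projective dimension $n-1$ is even, this must be the parabolic quadric $\mq(n-1,q)$, of size $\theta_{n-2}(q)$. A line $\langle X,P\rangle$ with $P \in \mq$ is tangent precisely when $B(X,P)=0$, i.e.\ $P \in X^\perp\cap\mq$, so there are $\theta_{n-2}(q)$ tangents through $X$. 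Since each point of $\mq$ lies on a unique line through $X$, the number of secants through $X$ equals $\frac12\bigl(|\mq| - |X^\perp\cap\mq|\bigr) = \frac12 q^{(n-1)/2}\bigl(q^{(n-1)/2}+\eps\bigr)$ by \Cref{ResQuadricSize}, and the number of passants is $\frac12 q^{(n-1)/2}\bigl(q^{(n-1)/2}-\eps\bigr)$.

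Next, on a tangent line through $X$ at $P$, the $q-1$ anisotropic points have the form $X + tP$ for $t \in \FF_q^*$, and $\kappa(X+tP) = \kappa(X) + tB(X,P) + t^2\kappa(P) = \kappa(X)$ since $P \in X^\perp \cap \mq$; hence each such point has the same quadratic type as $X$, giving $n_1 = q^{n-1}-1$. On a secant with $\ell \cap \mq = \{P_1,P_2\}$, the $q-1$ anisotropic points can be written as $P_1 + tP_2$ with $t \in \FF_q^*$, and $\kappa(P_1 + tP_2) = tB(P_1,P_2)$ where $B(P_1,P_2) \neq 0$; as $t$ varies, the type $S_q$ and the type $\ns_q$ each occur $(q-1)/2$ times. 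Excluding $X$ leaves $(q-3)/2$ points sharing $X$'s type and $(q-1)/2$ of the opposite type per secant, which multiplied by the number of secants gives $n_2$ and $n_4$.

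Finally, on a passant line $\ell$ the restriction of $\kappa$ to the underlying $2$-dimensional vector subspace is an anisotropic binary quadratic form, hence proportional to the norm form of $\FF_{q^2}/\FF_q$. The norm map $\FF_{q^2}^* \to \FF_q^*$ is surjective with fibres of size $q+1$, and rescaling a vector representative by $c \in \FF_q^*$ multiplies $\kappa$ by $c^2$, so a straightforward fibre count shows that the $q+1$ projective points of $\ell$ split evenly as $(q+1)/2$ of type $S_q$ and $(q+1)/2$ of type $\ns_q$. Excluding $X$ leaves $(q-1)/2$ points of $X$'s type and $(q+1)/2$ of the opposite type on each passant, which multiplied by the number of passants yields $n_3$ and $n_5$. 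The main technical input is this even split on passant lines; once this and the parabolic structure of $X^\perp \cap \mq$ are in hand, the remainder of the argument reduces to routine bookkeeping with \Cref{ResQuadricSize}.
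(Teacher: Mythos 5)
Your proof is correct and follows essentially the same route as the paper: classify the $\theta_{n-1}(q)$ lines through $X$ as tangent, secant, or passant using the parabolic section $X^\perp\cap\mq$, determine how the anisotropic points on each line split by quadratic type, and multiply. The only difference is that you verify explicitly (via $\kappa(x+tp)$ expansions and the norm-form argument) the type distributions on tangent, secant, and passant lines, which the paper simply asserts; this is a welcome but not structurally different addition.
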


\begin{proof}
 Take a point $X \in \pp$.
 A line through $X$ is tangent if and only if it intersects $X^\perp$ in a point of $\mq$.
 Since $\mq$ intersects $X^\perp$ in a non-singular parabolic quadric, there are $|\mq(n-1,q)| = \theta_{n-2}(q)$ tangent lines through $X$.
 The number of secant lines through $X$ then equals
 \[
  \frac 12 \left(|\mq| - \theta_{n-2}(q)\right) = \frac 12 \left(q^{n-1} + \eps q^{\frac{n-1}2}\right),
 \]
 where we used Result \ref{ResQuadricSize}.
 The number of passant lines through $X$ is therefore
 \[
  \theta_{n-1}(q) - \theta_{n-2}(q) - \frac 12 \left(q^{n-1} + \eps q^{\frac{n-1}2} \right)
  = \frac 12 \left(q^{n-1} - \eps q^{\frac{n-1}2} \right).
 \]
 
 Now take a line $\ell$ through $X$.
 If $\ell$ is tangent, then it contains $q-1$ points of $\pp$ in relation $R_1$ with respect to $X$.
 If $\ell$ is secant, then it contains $\frac{q-1}2$ points of $\pp$ of both types, hence $\frac{q-3}2$ points in relation $R_2$, and $\frac{q-1}2$ points in relation $R_4$ with respect to $X$.
 If $\ell$ is a passant line, then in contains $\frac{q+1}2$ points of each type, hence, $\frac{q-1}2$ points in relation $R_3$ with respect to $X$ and $\frac{q+1}2$ points in relation $R_5$ with respect to $X$.
 
 It is now easy to calculate the $n_i$ numbers.
\end{proof}

This is especially useful since the following identity holds. Note that is identity is well-known for association schemes, but we yet have to show that $\{R_0,\dots,R_5\}$ constitutes an association scheme.

\begin{lm}
 \label{LmSumValency}
 Take $(X,Y) \in R_k$, then for any $i$
 \[
  n_i = \sum_{j=0}^5 p_{i,j}^k(X,Y) = \sum_{j=0}^5 p_{j,i}^k(X,Y).
 \]
\end{lm}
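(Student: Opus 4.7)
The statement is essentially a bookkeeping identity, so the plan is short. The key observation is that the relations $R_0, R_1, \ldots, R_5$ partition $\pp \times \pp$, and each $R_j$ is symmetric. Consequently, for any fixed $Y \in \pp$, the sets $R_0(Y), R_1(Y), \ldots, R_5(Y)$ form a partition of $\pp$: every $Z \in \pp$ satisfies $(Y,Z) \in R_j$ for exactly one index $j$, and symmetry ensures $(Y,Z) \in R_j \iff Z \in R_j(Y)$. The same holds with $X$ in place of $Y$.

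With this in hand, the first equality follows from a single application of the partition property. Namely, for $(X,Y) \in R_k$,
\[
 \sum_{j=0}^5 p^{k}_{i,j}(X,Y) \;=\; \sum_{j=0}^5 |R_i(X) \cap R_j(Y)| \;=\; \left| R_i(X) \cap \bigcup_{j=0}^5 R_j(Y) \right| \;=\; |R_i(X) \cap \pp| \;=\; n_i,
\]
where the penultimate equality uses that the $R_j(Y)$ partition $\pp$, and the last equality uses that $n_i$ is independent of the base point (as noted right after \Cref{LmValencies}, using the transitivity of the stabiliser of $\mq$ on $\pp$ that respects the $R_j$).

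For the second equality, we instead partition $\pp$ via the sets $R_j(X)$:
\[
 \sum_{j=0}^5 p^{k}_{j,i}(X,Y) \;=\; \sum_{j=0}^5 |R_j(X) \cap R_i(Y)| \;=\; |R_i(Y)| \;=\; n_i,
\]
by exactly the same reasoning. There is no real obstacle here: the only thing to make sure is that the argument does not secretly use the association scheme property (since that is what we are working towards). It does not—both equalities rely only on (i) the $R_j$ partition $\pp \times \pp$, (ii) each $R_j$ is symmetric, and (iii) $|R_i(X)|$ depends only on $i$, all of which have been established independently in the preceding text.
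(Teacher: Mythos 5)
Your proof is correct and is essentially the paper's own argument: write $n_i = |R_i(X)|$ (resp.\ $|R_i(Y)|$), decompose over the partition of $\pp$ into the sets $R_j(Y)$ (resp.\ $R_j(X)$), and use that $n_i$ is independent of the base point. (The appeal to symmetry of the relations is superfluous here, since $Z \in R_j(Y)$ means $(Y,Z) \in R_j$ by definition, but this does not affect correctness.)
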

\begin{proof}
 We prove the first equality, the second one is completely analogous.
 \[
  n_i = |R_i(X)|
  = \left| \bigcup_{j=0}^5 R_i(X) \cap R_j(Y) \right|
  = \sum_{j=0}^5 p_{i,j}^k (X,Y).
  \qedhere
 \]
\end{proof}

A non-isotropic plane can intersect $\mq$ in 5 possible ways.
We will introduce some terminology which will be convenient in the arguments below.

\begin{df}
 We say that a plane $\pi$ in $\pg(n,q)$ is of type $t_i$ if the plane intersects $\mq$ exactly in a point $Q \in \mq$ and $i$ lines through $Q$, $i \in \{0,1,2\}$.
 If $\pi$ intersects $\mq$ in a conic $C$, then the external points in $\pi$ to $C$ (i.e.~the anisotropic points that are on tangent lines, as opposed to the internal points, which are not on tangent lines) are either the points of square type, in which case we say that $\pi$ is of type $t_{S_q}$, or the points of non-square type, in which case we say that $\pi$ is of type $t_{\ns_q}$.
\end{df}

\begin{lm}
 \label{LmPlanesThroughLines}
 Suppose that $\ell$ is a line, which isn't totally isotropic.
 In case that $\ell$ is tangent to $\mq$ suppose that the anisotropic points of $\ell$ are of type $s \in \{S_q, \ns_q\}$.
 \Cref{TabPlanesThroughLines} shows the number of planes of each type through $\ell$.
 \begin{table}[ht!]
  \small
  \centering
  \begin{tabular}{c|| c c c c}
   & $|\ell \cap \mq| = 1$ & $|\ell \cap \mq| = 1$ & $|\ell \cap \mq| = 2$ & $|\ell \cap \mq| = 0$\\
   & $s=S_q$ & $s=\ns_q$ \\ \hline \hline \\ [-1em]
   $t_0$ & $\frac{1}{2}\left(q^{n-3}-\varepsilon q^{\frac{n-3}{2}}\right)$ & $\frac{1}{2}\left(q^{n-3}-\varepsilon q^{\frac{n-3}{2}}\right)$ & 0 & $\frac 1 {q-1} \left( q^{\frac{n-1}2} + \eps \right)\left( q^{\frac{n-3}2} - \eps \right)$ \\
   $t_1$ & $\theta_{n-4}(q)$ & $\theta_{n-4}(q)$ & 0 & 0 \\
   $t_2$ & $\frac{1}{2}\left(q^{n-3}+\varepsilon q^{\frac{n-3}{2}}\right)$ & $\frac{1}{2}\left(q^{n-3}+\varepsilon q^{\frac{n-3}{2}}\right)$ & $\frac 1 {q-1} \left( q^{\frac{n-1}2} - \eps \right)\left( q^{\frac{n-3}2} + \eps \right)$ & 0 \\ [1em] \hline \\ [-1em]
   $t_{S_q}$ & $q^{n-2}$ & 0 & $\frac{1}{2}\left(q^{n-2}-\eps q^{\frac{n-3}{2}}\right)$ & $\frac{1}{2}\left(q^{n-2}+\eps q^{\frac{n-3}{2}}\right)$ \\
   $t_{\ns_q}$ & 0 & $q^{n-2}$ & $\frac{1}{2}\left(q^{n-2}-\eps q^{\frac{n-3}{2}}\right)$ & $\frac{1}{2}\left(q^{n-2}+\eps q^{\frac{n-3}{2}}\right)$
  \end{tabular}
  \caption{The number of planes of a certain type through a non-isotropic line}
  \label{TabPlanesThroughLines}
 \end{table}
\end{lm}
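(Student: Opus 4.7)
The plan is to split the argument by the type of the non-isotropic line $\ell$ (tangent with a given anisotropic type $s$, secant, or passant), and in each case to compute the plane counts via combinatorial reductions to lower-dimensional quadrics obtained from the polarity $\perp$, together with a subtraction from the total number $\theta_{n-2}(q)$ of planes through $\ell$.

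For a tangent line $\ell$ at $T$ with anisotropic points of type $s$, I would first partition the planes through $\ell$ into those contained in $T^\perp$ and those not. Any $\pi \not\subseteq T^\perp$ contains no line of $\mq$ through $T$ (else $\pi$ would lie in $T^\perp$), so $\pi \cap \mq$ must be a smooth conic, to which $\ell$ is tangent at $T$; the anisotropic points of $\ell$ are then external, so $\pi$ is of type $t_s$ while the opposite type contributes $0$. There are $\theta_{n-2}(q) - \theta_{n-3}(q) = q^{n-2}$ such planes. For $\pi \subseteq T^\perp$, passing to the quotient $T^\perp/T \cong \pg(n-2, q)$, which carries $\mq^\eps(n-2, q)$ by part~(1) of the cited Hirschfeld--Thas result, the plane $\pi$ becomes a line through the image $\bar\ell$ of $\ell$, an anisotropic point; the types $t_0, t_1, t_2$ correspond respectively to passant, tangent, and secant lines of $\mq^\eps(n-2,q)$ through $\bar\ell$. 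The tangent count is $|\bar\ell^\perp \cap \mq^\eps(n-2, q)| = \theta_{n-4}(q)$, and the secant/passant split then follows from $|\mq^\eps(n-2, q)| = \theta_{n-3}(q) + \eps q^{(n-3)/2}$ by a standard double count.

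For a secant line $\ell$ with $\ell \cap \mq = \{T_1, T_2\}$, the types $t_0$ and $t_1$ clearly vanish, and a type-$t_2$ plane has the form $\vspan{\ell, Q}$ with $Q \in T_1^\perp \cap T_2^\perp \cap \mq = \ell^\perp \cap \mq$; by part~(2) of the same result this is $\mq^\eps(n-2, q)$. For a passant $\ell$, the types $t_1$ and $t_2$ vanish, and a type-$t_0$ plane has the form $\vspan{\ell, Q}$ with $Q$ the singular point of $\pi \cap \mq$, forcing $\pi \subseteq Q^\perp$ and hence $Q \in \ell^\perp \cap \mq$; here a short Witt-index computation on the orthogonal decomposition $V = V_\ell \perp V_{\ell^\perp}$ (using that $V_\ell \perp V_\ell$ is hyperbolic over $\FF_q$ for any anisotropic binary $V_\ell$) identifies $\ell^\perp \cap \mq$ as $\mq^{-\eps}(n-2, q)$.

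The expected main obstacle is the equal split between $t_{S_q}$ and $t_{\ns_q}$ for secant and passant lines. I would resolve this by a symmetry argument: since $\dim V = n+1$ is even, for every non-zero scalar $\lambda$ there exists a similitude $g$ of $\kappa$ with multiplier $\lambda$, and by composing with suitable isometries one can arrange $g(\ell) = \ell$ setwise. Taking $\lambda$ non-square, $g$ interchanges $S_q$- and $\ns_q$-type anisotropic points, hence interchanges the $t_{S_q}$-planes and $t_{\ns_q}$-planes through $\ell$; their numbers are therefore equal, and the common value follows by subtracting the previously computed $t_0, t_1, t_2$ contributions from $\theta_{n-2}(q)$ and halving. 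Should this symmetry argument prove delicate to carry out directly, an alternative is a double count of incidences between anisotropic points of a given type on $\ell$ and tangent lines in the conics $\pi \cap \mq$.
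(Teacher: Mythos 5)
Your proposal is correct, and its skeleton coincides with the paper's: for a tangent line you separate the planes inside $P^\perp$ (counted via tangent/secant/passant lines through an anisotropic point of a $\mq^\eps(n-2,q)$ — you do this in the quotient $P^\perp/P$, the paper in a hyperplane of $P^\perp$ avoiding $P$, which is the same computation) from the conic planes, all of type $t_s$ and $q^{n-2}$ in number; for secant and passant lines you identify the singular planes with the points of $\ell^\perp\cap\mq\cong\mq^{\pm\eps}(n-2,q)$, exactly as the paper does. The one step where you genuinely diverge is the equal split between $t_{S_q}$- and $t_{\ns_q}$-planes. The paper pins down the type of each conic plane individually: with $T=\pi\cap\ell^\perp$ anisotropic, the tangent lines to the conic through $T$ are the lines meeting $\ell$ in $\mq$, so $\pi$ has type $t_{\kappa(T)}$ (secant case) resp.\ $t_{\ns_q\cdot\kappa(T)}$ (passant case), and the split is equal because $\ell^\perp$ carries a non-degenerate quadric and hence has equally many anisotropic points of each quadratic type. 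You instead use a similitude with non-square multiplier stabilising $\ell$ (existence via surjectivity of the multiplier map in even vector-space dimension, adjusted by Witt transitivity on secant resp.\ passant lines), which swaps the two conic types of planes through $\ell$. Both are sound; the paper's argument is more elementary and yields finer information (the type of each plane as a function of $\kappa(T)$, in the same spirit as the rest of the section), while yours is quicker once the standard facts about similitudes and Witt's theorem are granted. One small imprecision in your tangent case: ``$\pi$ contains no line of $\mq$ through $T$'' does not by itself exclude a $t_0$-section, since a single-point section contains no lines; but such a section would be $\{T\}$ with $T$ its radical, again forcing $\pi\subseteq T^\perp$ — this is exactly how the paper phrases it ($T$ is a singular point of any singular section), so the conclusion stands after this one-line patch.
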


\begin{proof}
 First suppose that $\ell$ intersects $\mq$ in a unique point $P$.
 Then $\ell \subseteq P^\perp$.
 If a plane $\pi$ through $\ell$ intersects $\mq$ in a singular plane section, then $P$ must be a singular point of this plane section, since it lies on a tangent line.
 This happens if and only if $\pi \subseteq P^\perp$.
 Since there are $\theta_{n-2}(q)$ planes through $\ell$, of which $\theta_{n-3}(q)$ lie in $P^\perp$, there are $\theta_{n-2}(q)-\theta_{n-3}(q) = q^{n-2}$ planes $\pi$ through $\ell$ intersecting $\mq$ in a conic.
 Since the non-isotropic points of $\ell$ lie on a tangent line in such a plane $\pi$, the type of $\pi$ is $t_s$.
 
 Now let $\sigma$ be a hyperplane in $P^\perp$, not through $P$.
 Then $\sigma$ intersects $\mq$ in a quadric isomorphic to $\mq^\eps(n-2,q)$.
 Let $T$ denote $\ell \cap \sigma$.
 Then there is a one-to-one correspondence between the planes of type $t_i$ through $\ell$ and the lines in $\sigma$ through $T$ that intersect $\mq$ in $i$ points.
 As in the proof of \Cref{LmValencies}, one can calculate that $\sigma$ contains $\theta_{n-4}(q)$ tangent lines, $\frac12 \left( q^{n-3} + \eps q^{\frac{n-3}2} \right)$ secant lines, and $\frac 12 \left( q^{n-3} - \eps q^{\frac{n-3}2} \right)$ passant lines through $T$.
 
 Now suppose that $\ell$ intersects $\mq$ in $\delta+1$ points, with $\delta \in \{\pm 1\}$.
 Then $\ell \cap \mq$ is isomorphic to $\mq^\delta(1,q)$.
 Hence, $\ell^\perp \cap \mq$ is isomorphic to $\mq^{\eps \delta}(n-2,q)$.
 Moreover, every plane $\pi$ through $\ell$ intersects $\ell^\perp$ in a unique point $T$.
 The intersection of $\pi$ and $\mq$ is singular if and only if $T \in \mq \cap \ell^\perp$.
 Hence, there are $|\mq^{\eps \delta}(n-2,q)| =\theta_{n-3}(q)+\eps\delta q^{\frac{n-3}{2}}$ singular planes through $\ell$, necessarily of type $t_{\delta+1}$. 
 Lastly, consider the case where $T \notin \mq$.
 Since $T \in \ell^\perp$, we know that $\ell \subset T^\perp$.
 Moreover, $T \notin T^\perp$ since $T \notin \mq$, thus $\ell = T^\perp \cap \pi$.
 Therefore, the tangent lines through $T$ in $\pi$ are exactly the lines through $T$ in $\pi$ that intersect $\ell$ in a point of $\mq$.
 Thus, $\pi$ is of type $t_{\kappa(T)}$ if $\delta=1$ and $t_{\ns_q \cdot \kappa(T)}$ if $\delta=-1$.
 Since $\ell^\perp$ intersects $\mq$ in a quadric isomorphic to $\mq^{\eps \delta}(n-2,q)$, it contains equally many non-isotropic points of both quadratic types.
 Therefore the number of planes of type $t_{S_q}$ through $\ell$ equals the number of planes of type $t_{\ns_q}$ through $\ell$.
 Since there are $\theta_{n-2}(q)$ planes through $\ell$, this number equals
 \[
  \frac12 \left( \theta_{n-2}(q) - \left( \theta_{n-3}(q)+\eps\delta q^{\frac{n-3}{2}}\right) \right) = \frac 12 \left( q^{n-2} - \eps \delta q^{\frac{n-3}2} \right). \qedhere
 \]
\end{proof}

Next we prove that $p^k_{i,j}(X,Y) = p^k_{j,i}(X,Y)$ whenever $k \geq 2$.
This follows from the next lemma.

\begin{lm}
 \label{lmHulp}
 Suppose that $X$ and $Y$ are different points outside of $\mq$, such that $\vspan{X,Y}$ is not a tangent line.
 Then there exists a collineation stabilising $\mq$ that swaps $X$ and $Y$.
\end{lm}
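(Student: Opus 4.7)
The plan is to construct a similitude of $\kappa$ on $V=\FF_q^{n+1}$ that swaps $X$ and $Y$ projectively; any such similitude preserves the zero set of $\kappa$ and therefore induces a collineation of $\pg(n,q)$ stabilising $\mq$. The crucial structural input is the orthogonal decomposition $V=\ell\oplus\ell^\perp$, where $\ell=\vspan{X,Y}$. Since $\ell$ is not tangent, $\kappa|_\ell$ is non-degenerate; this justifies the decomposition and ensures that $\kappa|_{\ell^\perp}$ is non-degenerate as well. I will build the desired similitude as $f=f_1\oplus f_2$, where $f_1$ is a similitude of $\kappa|_\ell$ swapping $X$ and $Y$, and $f_2$ is a similitude of $\kappa|_{\ell^\perp}$ with the same similitude factor.

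For $f_1$, I fix vector representatives $x,y\in V$ of $X,Y$ (which form a basis of $\ell$ since $X\neq Y$) and set $f_1(x)=y$ and $f_1(y)=(\kappa(y)/\kappa(x))\,x$. A direct expansion using the identity $\kappa(\alpha x+\beta y)=\alpha^2\kappa(x)+\alpha\beta B(x,y)+\beta^2\kappa(y)$ together with the defining formula of $f_1$ yields $\kappa(f_1(v))=(\kappa(y)/\kappa(x))\,\kappa(v)$ for all $v\in\ell$. Hence $f_1$ is a similitude of $\kappa|_\ell$ with factor $\lambda=\kappa(y)/\kappa(x)\in\FF_q^*$, and it manifestly swaps $X$ and $Y$ projectively. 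A pleasant feature is that this formula handles the secant and passant cases in a single stroke.

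For $f_2$, one must realise the factor $\lambda$ as a similitude factor of $\kappa|_{\ell^\perp}$. When $n=1$ there is nothing to do. When $n\geq 3$, $\ell^\perp$ has vector dimension $n-1\geq 2$ and is non-degenerate, so it contains a non-degenerate $2$-dimensional subspace $W$. On $W$, every element of $\FF_q^*$ occurs as a similitude factor: in hyperbolic coordinates via $(u,v)\mapsto(\lambda u,v)$, and in anisotropic coordinates by identifying $W$ with $(\FF_{q^2}, N_{\FF_{q^2}/\FF_q})$ and multiplying by an element $c$ with $N(c)=\lambda$, which exists because the norm map $\FF_{q^2}^*\to\FF_q^*$ is surjective. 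Extending by the identity on the orthogonal complement of $W$ in $\ell^\perp$ furnishes $f_2$, after which $f=f_1\oplus f_2$ is a similitude of $\kappa$ on $V$ by orthogonality, giving the required collineation. The main obstacle is precisely this similitude-factor step: when $\kappa(x)\kappa(y)$ is a non-square (i.e.\ the relation is $R_4$ or $R_5$), no isometry of $\kappa$ can swap $X$ and $Y$, so one genuinely needs to work with similitudes, and it is the surjectivity of the norm that makes every non-square factor achievable on $\ell^\perp$.
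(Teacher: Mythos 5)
Your overall strategy is the same as the paper's: decompose $V=\ell\oplus\ell^\perp$ (valid since $\ell$ is non-tangent), swap $X$ and $Y$ by a similitude of $\kappa|_\ell$ with factor $\lambda=\kappa(y)/\kappa(x)$, and complete it by a similitude of $\kappa|_{\ell^\perp}$ with the same factor. Your $f_1$ is a clean coordinate-free version of the paper's explicit $\varphi$, and the verification $\kappa(f_1(v))=\lambda\kappa(v)$ on $\ell$ is correct. The gap is in $f_2$. A map that multiplies $\kappa$ by $\lambda$ on a single non-degenerate plane $W\subseteq\ell^\perp$ and is the identity on the orthogonal complement of $W$ inside $\ell^\perp$ is \emph{not} a similitude of $\kappa|_{\ell^\perp}$: it rescales the form by $\lambda$ on $W$ but by $1$ on $W^\perp\cap\ell^\perp$. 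Hence $f=f_1\oplus f_2$ satisfies $\kappa\circ f=\mu\kappa$ for no single $\mu$, and it need not stabilise $\mq$. Concretely, if $n\geq 5$ and $\lambda\neq 1$, choose $w_1\in W$ with $\kappa(w_1)\neq 0$ and $w_2\in W^\perp\cap\ell^\perp$ with $\kappa(w_2)=-\kappa(w_1)$ (possible because $W^\perp\cap\ell^\perp$ is non-degenerate of dimension $n-3\geq 2$, so it represents every nonzero scalar); then $v=w_1+w_2$ is isotropic, but $\kappa(f(v))=\lambda\kappa(w_1)+\kappa(w_2)=(\lambda-1)\kappa(w_1)\neq 0$, so $f$ maps a point of $\mq$ off the quadric. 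Since, as you note yourself, the essential case is $\kappa(x)\kappa(y)\in\ns_q$, where $\lambda$ cannot be normalised to a square and no isometry works, your argument as written only covers $n\leq 3$.

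The repair is short and brings you back to the paper's argument: $\ell^\perp$ has even vector dimension $n-1$, so it splits as an orthogonal direct sum of non-degenerate planes; apply your factor-$\lambda$ similitude (the hyperbolic scaling $(u,v)\mapsto(\lambda u,v)$, or multiplication by a norm-$\lambda$ element of $\FF_{q^2}$ on an anisotropic plane) on \emph{every} plane of this decomposition simultaneously, not on one plane only. Equivalently, invoke the standard fact — which is exactly what the paper uses at this point, also without proof — that every $\alpha\in\FF_q^*$ is a similitude factor of a non-degenerate quadratic form in an even number of variables over $\FF_q$, i.e.\ there is $\psi\in\GL(n-1,q)$ with $(\kappa|_{\ell^\perp})\circ\psi=\alpha\,\kappa|_{\ell^\perp}$.
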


\begin{proof}
 Since the line $\ell = \vspan{X,Y}$ is non-singular, $\ell$ is disjoint to $\ell^\perp$.
 Let $e_0, \dots, e_n$ denote the standard basis of $\FF_q^{n+1}$.
 We may suppose that $\ell = \vspan{e_0, e_1}$ and $\ell^\perp = \vspan{e_2, \dots, e_n}$.
 Thus, the quadratic form $\kappa$ defining $\mq$ is of the form $\kappa(X) = f(X_0,X_1) + g(X_2, \dots, X_n)$.
 It suffices to prove that there exists a $\varphi \in \GL(2,q)$ such that $f \circ \varphi = \alpha f$ for some $\alpha \in \FF_q^*$ and such that the collineation on $\ell$ induced by $\varphi$ swaps $X$ and $Y$.
 Indeed, since for each constant $\alpha \in \FF_q^*$ there exists a $\psi \in \GL(n-1,q)$ such that $g \circ \psi = \alpha g$, the collineation induced by $(X_0, \dots, X_n) \mapsto (\varphi(X_0,X_1), \psi(X_2, \dots, X_n))$ then satisfies the properties of the lemma.
 
 We may assume that $f(X_0,X_1) = X_0^2 - \nu X_1^2$ for some $\nu \in \FF_q^*$, with $\nu$ a square if and only if $\mq$ is hyperbolic, and $X = \vspan{e_0}$.
 If $Y = \vspan{e_1}$, then we can use $\varphi(X_0,X_1) = \left( X_1, \frac{X_0}\nu \right)$.
 Otherwise, $Y = \vspan{e_0 + \alpha e_1}$ for some $\alpha \in \FF_q^*$.
 Then we can use $\varphi(X_0,X_1) = (X_0 - \alpha \nu X_1, \alpha X_0 - X_1)$.
 Note that in this case $\varphi$ being invertible is equivalent to $Y = \vspan{e_0 + \alpha e_1} \notin \mq$.
\end{proof}

\subsection{Computation of the intersection numbers}

Throughout this subsection, assume that $X$ and $Y$ are distinct points of $\pp$ and let $\ell$ denote the line joining $X$ and $Y$.

\begin{lm}
 \label{LmIntersectionNumbersEqualToZero}
    For $(X,Y)\in R_{i}$ we have the following equalities:
    \begin{align*}
        0&=p^{i}_{1,4}=p^{i}_{1,5}=p^{i}_{2,4}=p^{i}_{2,5}=p^{i}_{3,4}=p^{i}_{3,5}&&\text{ for } i=1,2,3\quad \text{and}\\
        0&=p^{i}_{1,1}=p^{i}_{1,2}=p^{i}_{1,3}=p^{i}_{2,2}=p^{i}_{2,3}=p^{i}_{3,3}=p^{i}_{4,4}=p^{i}_{4,5}=p^{i}_{5,5}\quad&&\text{ for } i=4,5.
    \end{align*}
\end{lm}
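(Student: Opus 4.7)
The proof plan rests on a single structural observation: the quadratic type function $\kappa$ partitions $\pp$ into the two classes $\pp_{S_q}=\sett{X\in\pp}{\kappa(X)=S_q}$ and $\pp_{\ns_q}=\sett{X\in\pp}{\kappa(X)=\ns_q}$, and by the very definition in \Cref{tab:rel}, the condition $\kappa(X)\cdot\kappa(Y)\in S_q$ versus $\kappa(X)\cdot\kappa(Y)\in\ns_q$ tells us exactly whether $X$ and $Y$ sit in the same class or in different classes. Consequently $R_1,R_2,R_3$ are contained in $(\pp_{S_q}\times\pp_{S_q})\cup(\pp_{\ns_q}\times\pp_{\ns_q})$, while $R_4,R_5$ are contained in $(\pp_{S_q}\times\pp_{\ns_q})\cup(\pp_{\ns_q}\times\pp_{S_q})$. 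I would make this observation explicit at the start of the proof and then simply apply it to each asserted equality.

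For the first set of equalities, suppose $(X,Y)\in R_k$ with $k\in\{1,2,3\}$, so $X$ and $Y$ lie in the same class, say $\pp_s$ with $s\in\{S_q,\ns_q\}$. To show $p^k_{i,j}(X,Y)=0$ for the listed pairs with $i\in\{1,2,3\}$ and $j\in\{4,5\}$, take any hypothetical $Z\in R_i(X)\cap R_j(Y)$. The condition $(X,Z)\in R_i$ with $i\in\{1,2,3\}$ forces $Z\in\pp_s$, while $(Z,Y)\in R_j$ with $j\in\{4,5\}$ forces $Z\in\pp_{s'}$ for the opposite $s'\neq s$. This contradiction shows $R_i(X)\cap R_j(Y)=\emptyset$.

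For the second set of equalities, suppose $(X,Y)\in R_k$ with $k\in\{4,5\}$, so $X$ and $Y$ lie in different classes. If $i,j\in\{1,2,3\}$, then any $Z\in R_i(X)\cap R_j(Y)$ would have to share the class of $X$ (from $(X,Z)\in R_i$) and simultaneously the class of $Y$ (from $(Z,Y)\in R_j$), which is impossible. Symmetrically, if $i,j\in\{4,5\}$, then $Z$ would have to be in the class opposite to $X$ and simultaneously in the class opposite to $Y$, again impossible. This settles all the remaining cases. No counting is required; the lemma is purely a parity statement about the partition of $\pp$ by quadratic type, and in particular the values obtained are uniform in $(X,Y)\in R_k$, which is of course needed to unambiguously write $p^k_{i,j}$ without the $(X,Y)$ argument. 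There is no real obstacle here—the substantive computations are deferred to the subsequent lemmas that handle the nonzero intersection numbers.
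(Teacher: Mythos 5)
Your proof is correct and is essentially the paper's own argument: the paper condenses the same parity reasoning into the identity $\kappa(X)\cdot\kappa(Y)=(\kappa(X)\cdot\kappa(Z))\cdot(\kappa(Y)\cdot\kappa(Z))$ together with $S_q\cdot S_q=\ns_q\cdot\ns_q=S_q$ and $S_q\cdot\ns_q=\ns_q$, which is precisely your observation that $R_1,R_2,R_3$ join points of the same quadratic type and $R_4,R_5$ join points of opposite types. No gap; your version just makes the partition of $\pp$ explicit.
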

\begin{proof}
    We know that $\kappa(X)\cdot\kappa(Y)=(\kappa(X)\cdot \kappa(Z))\cdot(\kappa(Y)\cdot\kappa(Z))$. The statements then immediately follow from the observations $S_q \cdot S_q = \ns_q \cdot \ns_q = S_q$, and $S_q \cdot \ns_q = \ns_q$.
\end{proof}

\begin{lm}[{\cite[\S 12.2, p.\ 380]{bcn}}]
 \label{lem:p122}
    Let $C$ be a conic in $\pg(2,q)$ with corresponding quadratic form $\kappa$, and let $X,Y\notin C$ be distinct points in $\pg(2,q)$ such that $\vspan{X,Y}$ is a tangent to $C$. The number of points $Z$ such that both $\vspan{X,Z}$ and $\vspan{Y,Z}$ are secant lines and $\kappa(X)\cdot\kappa(Z)=S_{q}=\kappa(Y)\cdot\kappa(Z)$ equals $\frac{(q-3)(q-5)}{8}$.
\end{lm}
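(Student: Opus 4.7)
The plan is to pick coordinates that turn the three conditions on $Z$ into three polynomial conditions in two variables, and then reduce the count to counting ordered pairs in $\FF_q$ by means of a short character-sum argument. I take $\kappa(x_0, x_1, x_2) = x_1^2 - x_0 x_2$, so that the tangent line $\vspan{X, Y}$ at $T = \vspan{(1, 0, 0)}$ has equation $X_2 = 0$, and choose $X = \vspan{(0, 1, 0)}$ and $Y = \vspan{(1, 1, 0)}$, both of quadratic type $S_q$. Computing the discriminants of $\kappa$ restricted to $\vspan{X, Z}$ and to $\vspan{Y, Z}$ shows that, writing $Z = \vspan{(z_0, z_1, z_2)}$, the line $\vspan{X, Z}$ is a secant iff $z_0 z_2 \in S_q$, and $\vspan{Y, Z}$ is a secant iff $z_2(4 z_0 - 4 z_1 + z_2) \in S_q$. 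All three conditions of the lemma force $z_2 \neq 0$, so after normalising $z_2 = 1$ one must count $(z_0, z_1) \in \FF_q^2$ satisfying
\[
 z_0 \in S_q, \qquad 4 z_0 - 4 z_1 + 1 \in S_q, \qquad z_1^2 - z_0 \in S_q.
\]

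Next I substitute $A = 4 z_0$ and $w = 2 z_1$, turning the three conditions into $A,\, A - 2w + 1,\, w^2 - A \in S_q$. Writing the last one as $w^2 - A = D^2$ with $D \in \FF_q^*$ (so exactly two values of $D$ per valid $(A, w)$) and applying the linear bijection $(s, t) = (w - D, w + D)$ from $\sett{(w, D)}{D \neq 0}$ onto $\sett{(s, t)}{s \neq t}$, one computes $A = st$ and $A - 2w + 1 = (s - 1)(t - 1)$. Writing $b$ for the count in the lemma, this gives
\[
 2 b \;=\; \#\sett{(s, t) \in \FF_q^2}{s \neq t, \ s t \in S_q, \ (s - 1)(t - 1) \in S_q}.
\]

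Let $\chi$ denote the quadratic character of $\FF_q$. The conditions $st \in S_q$ and $(s - 1)(t - 1) \in S_q$ amount to $s, t \in \FF_q \setminus \set{0, 1}$ with $\chi(s) = \chi(t)$ and $\chi(s - 1) = \chi(t - 1)$. Partitioning $\FF_q \setminus \set{0, 1}$ into four classes of sizes $N_\sigma = \#\sett{s}{(\chi(s), \chi(s - 1)) = \sigma}$, indexed by $\sigma \in \set{\pm 1}^2$, thus gives $2 b = \sum_\sigma N_\sigma(N_\sigma - 1)$. A short character-sum calculation (using $\sum_{s \in \FF_q} \chi(s) = 0$ and $\sum_{s \in \FF_q} \chi(s(s - 1)) = -1$) gives $N_\sigma = \tfrac{1}{4}(q - 2 - \sigma_1 - \sigma_2 \chi(-1) - \sigma_1 \sigma_2)$; summing the $N_\sigma^2$ over $\sigma$ kills all the $\chi(-1)$ contributions by orthogonality, leaving $\sum_\sigma N_\sigma^2 = \tfrac{(q - 2)^2 + 3}{4}$ and hence $2 b = \tfrac{(q - 2)^2 - 4(q - 2) + 3}{4} = \tfrac{(q - 3)(q - 5)}{4}$. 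The only real subtlety in this route is the double substitution $(z_0, z_1) \to (A, w) \to (s, t)$, which must simultaneously symmetrise all three original conditions; after that, the character-sum computation is entirely elementary.
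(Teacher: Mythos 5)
Your computation is correct: I checked the discriminants (for $\kappa=x_1^2-x_0x_2$, $X=\vspan{(0,1,0)}$, $Y=\vspan{(1,1,0)}$ the secancy conditions are indeed $z_0z_2\in S_q$ and $z_2(4z_0-4z_1+z_2)\in S_q$), both substitutions ($st=w^2-D^2=A$ and $(s-1)(t-1)=A-2w+1$ since $s+t=2w$), the class sizes $N_\sigma=\frac14\left(q-2-\sigma_1-\sigma_2\chi(-1)-\sigma_1\sigma_2\right)$, and the final identity $2b=\sum_\sigma N_\sigma(N_\sigma-1)=\frac{(q-3)(q-5)}{4}$; the result also agrees with direct enumeration for $q=5$ and $q=7$. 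This is a genuinely different route from the paper, which does not prove the lemma at all but quotes it from \cite[\S 12.2, p.\ 380]{bcn}, where the number arises inside the analysis of the schemes on anisotropic points of $\mq^{\pm}(3,q)$. The citation keeps the paper short; your argument makes the statement self-contained and verifiable line by line, and the same character-sum template (normalise, turn all three conditions into quadratic-residue conditions, symmetrise via $(s,t)=(w-D,w+D)$ so both remaining conditions become products, then use $\sum_s\chi(s)=0$ and $\sum_s\chi(s(s-1))=-1$) would with trivial changes also produce the companion counts such as $|R_4(X)\cap R_4(Y)|$ directly rather than by complementation as in the proof of \Cref{lem:R1relations}. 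The one step you should make explicit is the opening normalisation: the lemma is stated for an arbitrary conic, tangent line, and pair $X,Y$, so you should note that the count is invariant under any collineation $g$ stabilising $C$ (such a $g$ satisfies $\kappa\circ g=\mu\kappa$, and the conditions only involve products of two $\kappa$-values, hence only square classes of the form $\mu^2\cdot(\text{something})$), that this stabiliser is transitive on tangent lines and acts on the $q$ anisotropic points of a fixed tangent line as the $2$-transitive affine group (in your coordinates $s\mapsto as+2b$), and that rescaling $\kappa$ by a non-square likewise leaves the conditions unchanged, so assuming $X,Y$ of type $S_q$ is harmless. This is routine and does not affect correctness, but a blind reader needs the sentence.
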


\begin{lm}
 \label{lem:R1relations}
    For $(X,Y)\in R_{1}$ we have the following equalities.
    {\allowdisplaybreaks \begin{align*}
        p^{1}_{1,1}&=2(q^{n-2}-1)\\
        p^{1}_{1,2}&=\frac{1}{2}q^{n-2}(q-3)\\
        p^{1}_{1,3}&=\frac{1}{2}q^{n-2}(q-1)\\
        p^{1}_{2,2}&=\frac{1}{8}q^{\frac{n-1}{2}}\left(q^{\frac{n-1}{2}}-3q^{\frac{n-3}{2}}+2\eps\right)(q-3)\\
        p^{1}_{2,3}&=\frac{1}{8}q^{n-2}(q-1)(q-3)\\
        p^{1}_{3,3}&=\frac{1}{8}q^{\frac{n-1}{2}}\left(q^{\frac{n-1}{2}}-q^{\frac{n-3}{2}}-2\eps\right)(q-1)\\
        p^{1}_{4,4}&=\frac{1}{8}q^{\frac{n-1}{2}}\left(q^{\frac{n-1}{2}}-q^{\frac{n-3}{2}}+2\eps\right)(q-1)\\
        p^{1}_{4,5}&=\frac{1}{8}q^{n-2}(q+1)(q-1)\\
        p^{1}_{5,5}&=\frac{1}{8}q^{\frac{n-1}{2}}\left(q^{\frac{n-1}{2}}+q^{\frac{n-3}{2}}-2\eps\right)(q+1)
    \end{align*}}
\end{lm}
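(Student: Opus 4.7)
The plan is to count, for each target intersection number $p^1_{i,j}$, the points $Z \in \pp$ with $(X,Z) \in R_i$ and $(Y,Z) \in R_j$, by conditioning on the position of $Z$ relative to $\ell = \vspan{X,Y}$. Fix $(X,Y) \in R_1$ and let $P$ be the tangency point $\ell \cap \mq$. Points $Z \in \ell$ are handled directly: parametrising $\ell$ as $\{sX + tP\}$, one computes $\kappa(sX + tP) = s^2 \kappa(X)$, so all $q-1$ anisotropic points of $\ell$ share a single quadratic type and any two of them span $\ell$ itself. Hence only the $q-3$ anisotropic points of $\ell \setminus \{X, Y\}$ contribute, and they contribute only to $p^1_{1,1}$. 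For $Z \notin \ell$, the plane $\pi = \vspan{\ell, Z}$ falls into one of the five types of \Cref{LmPlanesThroughLines}, and the relations $(X, Z), (Y, Z)$ are determined by the planar configuration. For each plane type I would count the anisotropic $Z \in \pi \setminus \ell$ in each relation combination, multiply by the number of planes of that type through $\ell$ (given again by \Cref{LmPlanesThroughLines}), and sum to obtain $p^1_{i,j}$.

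The per-plane enumeration breaks into five cases. In type $t_0$, every line of $\pi$ not through $P$ is passant, so $XZ$ and $YZ$ are both passant for every anisotropic $Z \in \pi \setminus \ell$; passing to the quotient $\pi/\vspan{P}$, the induced anisotropic binary form splits the $q^2$ such points as $q(q-1)/2$ of the same quadratic type as $X$ and $q(q+1)/2$ of the opposite type, feeding into $p^1_{3,3}$ and $p^1_{5,5}$. In type $t_1$, every line of $\pi$ other than the unique line of $\mq$ is tangent, so every anisotropic $Z \in \pi \setminus \ell$ yields $(X, Z), (Y, Z) \in R_1$. In type $t_2$, the two lines of $\mq$ in $\pi$ both pass through $P$, so every line through $X$ or $Y$ in $\pi$ other than $\ell$ is secant and the contribution is entirely to $p^1_{2,2}$ and $p^1_{4,4}$. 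In the conic types $t_{S_q}$ and $t_{\ns_q}$, $\pi \cap \mq$ is a smooth conic $C$ tangent to $\ell$ at $P$, with $X$ and $Y$ external or internal according to whether the plane type matches $\kappa(X)$, and each such plane feeds several $p^1_{i,j}$ simultaneously, sorted out by classifying lines through $X$ and through $Y$ as tangent, secant, or passant to $C$ and by tracking the quadratic type of $Z$.

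The main technical obstacle is the conic-plane enumeration, and specifically the same-type secant-secant count from $t_{S_q}$ planes needed for $p^1_{2,2}$: one must count anisotropic $Z \in \pi$ of type $S_q$ lying simultaneously on a secant from $X$ and a secant from $Y$, when $XY$ is tangent to $C$. This is precisely the quantity supplied by \Cref{lem:p122}. Once this is in hand, the remaining planar combinations reduce to direct line-counts via the standard facts that through an external point of a conic there pass $2$ tangents, $(q-1)/2$ secants, and $(q-1)/2$ passants to $C$, through an internal point $0$ tangents together with $(q+1)/2$ secants and $(q+1)/2$ passants, and that along a secant the $q-1$ anisotropic points split evenly between the two quadratic types, along a passant the $q+1$ anisotropic points split evenly, while along any tangent to $\mq$ all $q-1$ anisotropic points share a single quadratic type. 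A sanity check is provided by the identity $\sum_j p^1_{i,j} = n_i$ from \Cref{LmSumValency} combined with the valencies of \Cref{LmValencies}.
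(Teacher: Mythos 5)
Your approach is essentially the paper's: fix $\ell=\vspan{X,Y}$ with tangency point $P$, decompose the ambient space into the planes through $\ell$ counted by \Cref{LmPlanesThroughLines}, handle the conic planes via the line-pencil structure at the external points $X,Y$ with \Cref{lem:p122} supplying the only genuinely hard count (same-type secant--secant points), and let \Cref{LmValencies} and \Cref{LmSumValency} absorb the rest. The only organisational difference is that the paper computes just $p^1_{1,1},p^1_{1,2},p^1_{2,2},p^1_{4,4}$ directly and derives every other entry from \Cref{LmSumValency} together with \Cref{LmIntersectionNumbersEqualToZero}, whereas you propose to enumerate all combinations plane by plane. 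Be aware that your ``standard facts'' about how a single secant or passant splits between the two quadratic types do not by themselves give the joint splits in a conic plane (e.g.\ the passant--passant split by type, needed for a direct computation of $p^1_{3,3}$ and $p^1_{5,5}$); those require either a further joint count in the spirit of \Cref{lem:p122} or a subtraction argument, so the sum-rule identity has to do real work rather than serve only as a sanity check. Also, since $X$ and $Y$ lie on the tangent $\ell$, they are external points in every conic plane through $\ell$; the internal case never occurs here, consistent with the zero entry in \Cref{TabPlanesThroughLines}.

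One concrete slip: a tangent line to $\mq$ has $q+1$ points of which exactly one is isotropic, hence $q$ anisotropic points, not $q-1$; in your own parametrisation $\vspan{X+tP}$ the parameter $t$ runs over all of $\FF_q$. Consequently $\ell$ contributes $q-2$ points to $p^1_{1,1}$, not $q-3$, and with $q-3$ your total would come out as $2q^{n-2}-3$ instead of the stated $2(q^{n-2}-1)$. The same miscount recurs in your list of standard facts (``along any tangent \dots all $q-1$ anisotropic points''), so take care that it does not propagate into the type splits inside the $t_2$ planes or along the tangents $\ell_X,\ell_Y$ in the conic planes.
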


\begin{proof}
 From \Cref{LmPlanesThroughLines}, we know the number of planes of each type through $\ell$.
 We will compute how much each of these planes contributes to the numbers $p^1_{1,1}$, $p^1_{1,2}$, $p^1_{2,2}$, and $p^1_{4,4}$.
 Then we can compute the remaining intersection numbers $p^1_{i,j}$ using \Cref{LmValencies} and \Cref{LmSumValency}.
 First note that $\ell$ contains $q-2$ anisotropic points distinct from $X$ and $Y$, all of which are in relation $R_1$ with respect to $X$ and $Y$.
 Let $P$ denote $\ell \cap \mq$.
 Take a plane $\pi$ through $\ell$.
 \begin{itemize}
  \item If $\pi$ is of type $t_0$, then any point of $\pi \setminus \ell$ is in relation $R_3$ or $R_5$ with respect to $X$ and $Y$.
  \item If $\pi$ is of type $t_1$, then there are $q(q-1)$ anisotropic points in $\pi \setminus \ell$, all in relation $R_1$ with respect to $X$ and $Y$.
  \item If $\pi$ is of type $t_2$, all anisotropic points of $\pi \setminus \ell$ lie on a secant line through $X$ and on a secant line through $Y$.
  Take a secant line $\ell'$ in $\pi$.
  Then $\ell'$ contains $\frac{q-1}2$ points of each quadratic type.
  Since each anisotropic point of $\pi$ lies on a tangent line through $P$ that intersects $\ell'$ in an anisotropic point, and all anisotropic points on a tangent line are of the same type, $\pi$ contains $q \frac{q-1}2$ anisotropic points of each type.
  Hence, there are $q\frac{q-3}2$ points in $R_2(X) \cap R_2(Y)$, and $q \frac{q-1}2$ points in $R_4(X) \cap R_4(Y)$.
  \item If $\pi$ is a non-singular plane, then it intersects $\mq$ in a conic, and there are unique tangent lines $\ell_X, \ell_Y \neq \ell$ going through $X$ and $Y$, respectively.
  There is only point of $\pi \setminus \ell$ in $R_1(X) \cap R_1(Y)$, namely $\ell_X \cap \ell_Y$.
  
  Now we count the number of points of $\pi$ in $R_1(X) \cap R_2(Y)$.
  These points lie necessarily on $\ell_X$.
  There are $\frac{q-1}2$ secant lines through $Y$ in $\pi$.
  Of these secant lines, $\vspan{Y,\ell_X \cap \mq}$ is the unique one intersecting $\ell_X$ in an isotropic point.
  Thus $\pi$ contains $\frac{q-3}2$ points of $R_1(X) \cap R_2(Y)$.
  
  Since $\pi$ contains $\frac{q-1}2$ secant lines through $X$ and equally many through $Y$, $\pi$ contains $\left(\frac{q-1}2 \right)^2$ points lying on a secant line through $X$ and a secant line through $Y$.
  Note that the conic $\pi \cap \mq$ contains $3$ points lying on a tangent line through $X$ or $Y$, so $q-2$ points lying on both a secant line through $X$ and $Y$.
  Therefore, $\pi$ contains $\left(\frac{q-1}2 \right)^2 - (q-2)$ points of $(R_2(X) \cap R_2(Y)) \cup (R_4(X) \cap R_4(Y))$.
  By \Cref{lem:p122}, $\frac{(q-3)(q-5)}8$ of these are in $R_2(X) \cap R_2(Y)$, which leaves
  \[
   \left(\frac{q-1}2 \right)^2 - (q-2) - \frac{(q-3)(q-5)}8 = \frac{(q-1)(q-3)}8
  \]
  points in $R_4(X) \cap R_4(Y)$.
 \end{itemize}
 
 This yields the following equalities.
 \begin{align*}
        p^{1}_{1,1}&=q-2+(q^{2}-q)\theta_{n-4}(q)+q^{n-2}\:,\\
        p^{1}_{1,2}&=q^{n-2}\left(\frac{q-3}{2}\right)\:,\\
        p^{1}_{2,2}&=\frac{1}{2}\left(q^{n-3}+\varepsilon q^{\frac{n-3}{2}}\right)\frac{q(q-3)}{2}+q^{n-2}\frac{(q-3)(q-5)}{8}\:,\\
        p^1_{4,4} &= \frac{1}{2}\left(q^{n-3}+\varepsilon q^{\frac{n-3}{2}}\right) \frac{q(q-1)}2 + q^{n-2} \frac{(q-1)(q-3)}8\:.
 \end{align*}
 Using \Cref{LmValencies}, \Cref{LmSumValency} and \Cref{LmIntersectionNumbersEqualToZero}, we can compute the remaining intersection numbers.
\end{proof}

The next lemma generalises \cite[Theorem 12.2.1]{bcn}.

\begin{lm}\label{lem:p222}
    For $(X,Y)\in R_{2}$ we have
    \begin{align*}
        p^{2}_{2,2}&=\frac{1}{8}q^{\frac{n-3}{2}}\left(\left(q^{\frac{n-1}{2}}-\eps\right)(q-3)^{2}+4\eps q(q-5)\right)\:,
    \end{align*}
    and for $(X,Y)\in R_{3}$ we have
    \begin{align*}
        p^{3}_{2,2}&=\frac{1}{8}q^{\frac{n-3}{2}}\left(q^{\frac{n-1}{2}}+\eps\right)(q-3)^{2}\:.
    \end{align*}
\end{lm}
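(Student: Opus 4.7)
The plan is to compute $p^{k}_{2,2}$ for $k\in\{2,3\}$ by partitioning $R_2(X)\cap R_2(Y)$ according to the plane $\pi=\vspan{X,Y,Z}$ containing the line $\ell\coloneqq\vspan{X,Y}$, following the same strategy used for $p^{1}_{i,j}$ in \Cref{lem:R1relations}. Every $Z\in R_2(X)\cap R_2(Y)$ outside $\ell$ determines a unique such plane, hence
\[
 p^{k}_{2,2} \;=\; \sum_{\tau}N_\tau(\ell)\,c(\tau),
\]
where $\tau$ ranges over plane types, $N_\tau(\ell)$ is provided by \Cref{LmPlanesThroughLines}, and $c(\tau)$ counts the anisotropic $Z\in\pi\setminus\ell$ in a plane of type $\tau$ with $\vspan{X,Z}$ and $\vspan{Y,Z}$ both secant and $\kappa(Z)\cdot\kappa(X)\in S_q$. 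When $\ell$ is secant (the case of $p^{2}_{2,2}$) the relevant types are $t_2$, $t_{S_q}$, $t_{\ns_q}$; when $\ell$ is passant (the case of $p^{3}_{2,2}$) they are $t_0$, $t_{S_q}$, $t_{\ns_q}$.

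I would first dispose of the singular plane types. A plane of type $t_0$ meets $\mq$ in a single point and therefore contains no secants, giving $c(t_0)=0$; consequently only the non-singular plane types contribute to $p^{3}_{2,2}$. For a plane $\pi$ of type $t_2$ write $\pi\cap\mq=\ell_1\cup\ell_2$ with $\ell_1\cap\ell_2=\{Q\}$. The unique tangents through $X$ and $Y$ in $\pi$ are $\vspan{X,Q}$ and $\vspan{Y,Q}$, and all $q-1$ anisotropic points on each of these tangents carry the common quadratic type of $X$ and $Y$. Since the anisotropic points of $\pi$ split evenly into two classes of size $q(q-1)/2$, a direct count of those of type $\kappa(X)$ lying outside $\ell\cup\vspan{X,Q}\cup\vspan{Y,Q}$ yields $c(t_2)=\tfrac12(q-1)(q-5)$.

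The non-singular plane types form the heart of the computation. Let $\pi$ be of type $t_{S_q}$ or $t_{\ns_q}$, so $C=\pi\cap\mq$ is a non-degenerate conic. The hypothesis $\kappa(X)\cdot\kappa(Y)\in S_q$ forces $X$ and $Y$ to share the same role with respect to $C$: both external (on some tangent of $C$) or both internal. I would prove a $\pg(2,q)$-level auxiliary lemma analogous to \Cref{lem:p122}, computing the number of $Z\in\pi$ with $\vspan{X,Z}$ and $\vspan{Y,Z}$ both secant and $\kappa(Z)\cdot\kappa(X)\in S_q$, treating the four sub-cases according to whether $\ell$ is secant or passant and whether $X,Y$ are both external or both internal. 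Each sub-case reduces to enumerating pairs of secants to $C$ through $X$ and through $Y$, subtracting the pairs that intersect on $C$, and then splitting the remaining anisotropic points between the two quadratic types using the standard $(q-1)/2$-$(q-1)/2$ partition of a secant line of $C$.

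The main obstacle is precisely this two-dimensional case analysis: the individual values $c(t_{S_q})$ and $c(t_{\ns_q})$ depend on the sub-case, and one has to keep careful track of the quadratic types to avoid double counting with the tangent-line structure already handled in the singular planes. Once those plane-level contributions are extracted, I expect $c(t_{S_q})+c(t_{\ns_q})=\tfrac14(q-3)^2$ in the passant case, which combined with $N_{t_{S_q}}(\ell)=N_{t_{\ns_q}}(\ell)=\tfrac12\bigl(q^{n-2}+\eps q^{\frac{n-3}{2}}\bigr)$ directly produces the stated formula for $p^{3}_{2,2}$; in the secant case the analogous sum combined with $c(t_2)\,N_{t_2}(\ell)$ will produce the additional $4\eps q(q-5)$ term visible in $p^{2}_{2,2}$.
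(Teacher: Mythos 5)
Your route is genuinely different from the paper's: you localise the count in the planes through $\ell=\vspan{X,Y}$ and feed in \Cref{LmPlanesThroughLines}, mirroring the paper's proofs of \Cref{lem:R1relations}--\Cref{lem:R3relations}, whereas the paper computes $p^2_{2,2}$ and $p^3_{2,2}$ algebraically, writing a representative of $Z$ as $v_1+v_2$ along $\ell\oplus\ell^\perp$ and counting solutions of the quadratic conditions (the sets $T_\alpha$ with $|T_\alpha|=\tfrac14(q-3)^2$, plus the correction from $\kappa(v_1)=a$ which only occurs in the secant case). Your treatment of the singular planes is fine ($c(t_0)=0$, and $c(t_2)=\tfrac12(q-1)(q-5)$ checks out), but there are two concrete gaps. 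First, your master identity $p^{k}_{2,2}=\sum_\tau N_\tau(\ell)c(\tau)$ is wrong for $k=2$: when $\ell$ is secant it carries $\tfrac{q-5}{2}$ anisotropic points $Z\neq X,Y$ of the same type as $X$, and for these $\vspan{X,Z}=\vspan{Y,Z}=\ell$ is secant, so they belong to $R_2(X)\cap R_2(Y)$ but to no plane-type bucket. Omitting them is not absorbed elsewhere: with them included the conic-plane contributions must sum to $\tfrac14(q-3)^2-(q-5)$ per pair of plane types, while without them no value of $c(t_{S_q})+c(t_{\ns_q})$ independent of $n$ is consistent with the stated formula.

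Second, and more seriously, the conic-plane counts $c(t_{S_q})$, $c(t_{\ns_q})$ are exactly the nontrivial content of the lemma, and the mechanism you propose for them does not work. The points lying on a secant through $X$ and a secant through $Y$ do \emph{not} split evenly between the two quadratic types, so ``splitting the remaining anisotropic points using the standard $(q-1)/2$--$(q-1)/2$ partition of a secant line'' gives the wrong answer; already in the tangent case this recipe would predict $\tfrac18(q-3)^2$ same-type points, whereas \Cref{lem:p122} gives $\tfrac{(q-3)(q-5)}{8}$ (the opposite-type count being $\tfrac{(q-1)(q-3)}{8}$). To make your approach work you would have to prove new planar analogues of \Cref{lem:p122} for a secant and for a passant base line, in the sub-cases where $X,Y$ are both external or both internal, and in addition justify that these refined planar counts are constants (this is not automatic: for a conic only the fused relations of \Cref{SubSecGeomSchemesOdd} are known to form a scheme, so constancy of the secant/passant refinement needs an argument). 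These planar lemmas require a genuine computation of the same flavour as the one in \cite[\S 12.2]{bcn}; the paper's $V_1\oplus V_2$ count is precisely the device that avoids having to prove them. As it stands, the proposal has the skeleton right but leaves the essential quantities unproved and replaces them with a counting heuristic that provably fails.
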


\begin{proof}
    Let $\ell$ be the line $\vspan{X,Y}$, and let $\eps'$ be such that $\ell\cap\mq$ is a $\mq^{\eps'}(1,q)$. Then we know that $\ell^{\perp}$ is disjoint to $\ell$ and that $\ell^{\perp}\cap\mq$ is a quadric $\mq^{\eps\eps'}(n-2,q)$. Let $W$, $V_{1}$ and $V_{2}$ be the underlying vector spaces of $\pg(n,q)$, $\ell$ and $\ell^{\perp}$, respectively. We know $W=V_{1}\oplus V_{2}$.
    \par Let $B$ be the bilinear form introduced in the introduction, connected to $\kappa$, and let $x$ and $y$ be vector representatives of the points $X$ and $Y$, respectively.
    For a point $Z$ with $z$ as a vector representative, the lines $\vspan{X,Z}$ and $\vspan{Y,Z}$ are secant lines if and only if $(B(x,z))^{2}-\kappa(x)\kappa(z)\in S_{q}$ and $(B(y,z))^{2}-\kappa(y)\kappa(z)\in S_{q}$. The vector $z$ can uniquely be written as $v_{1}+v_{2}$ with $v_{1}\in V_{1}$ and $v_{2}\in V_{2}$. Since $V_{1}=V_{2}^{\perp}$, we have that $B(x,z)=B(x,v_{1})$ and $B(y,z)=B(y,v_{1})$. Also, note that $\kappa(z)=\kappa(v_{1})+\kappa(v_{2})$.
    \par Now we define the following set:
    \begin{align*}
        \mathcal{S}&=\{(v_{1},v_{2})\in V_{1}\times V_{2}\, \| \, \forall w\in\{x,y\}:(B(w,v_{1}))^{2}-\kappa(w)(\kappa(v_{1})+\kappa(v_{2}))\in S_{q}\text{ and }\\&\qquad\qquad\qquad\qquad\qquad\qquad \kappa(x)(\kappa(v_{1})+\kappa(v_{2}))\in S_{q}\}\:.
    \end{align*}
    Note that $\kappa(x)(\kappa(v_{1})+\kappa(v_{2}))\in S_{q}$ if and only if $\kappa(y)(\kappa(v_{1})+\kappa(v_{2}))\in S_{q}$. Since $(0,0)\notin\mathcal{S}$ we know that $|\mathcal{S}|=(q-1)p^{\theta}_{2,2}$ with $\theta=\frac{5-\eps'}{2}$. We will determine $|\mathcal{S}|$.
    Define the following set for all $\alpha\in\FF_{q}^{*}$:
    \begin{align*}
        T_{\alpha}=\{v_{1}\in V_{1}\, \| \, \forall w\in\{x,y\}:(B(w,v_{1}))^{2}-\alpha\kappa(w)\in S_{q}\}\:.
    \end{align*}
    Using this notation, we find that
    { \allowdisplaybreaks \begin{align*}
        |\mathcal{S}|&=\sum_{a\in\FF_{q}}|\{(v_{1},v_{2})\in V_{1}\times V_{2}\, \| \,\forall w\in\{x,y\}:(B(w,v_{1}))^{2}-a\kappa(w)\in S_{q},\ a\kappa(x)\in S_{q}\text{ and }\\&\qquad\qquad\qquad\qquad\qquad\qquad a=\kappa(v_{1})+\kappa(v_{2})\}|\\
        &=\sum_{a\in \kappa(X)}|\{(v_{1},v_{2})\in V_{1}\times V_{2}\, \| \,\forall w\in\{x,y\}:(B(w,v_{1}))^{2}-a\kappa(w)\in S_{q}\text{ and }
        a=\kappa(v_{1})+\kappa(v_{2})\}|\\
        &=\sum_{a\in \kappa(X)}\sum_{v_{1}\in T_{a}}|\{v_{2}\in  V_{2}\, \| \,
        a=\kappa(v_{1})+\kappa(v_{2})\}|\\
        &=\sum_{a\in \kappa(X)}\left(\sum_{\stackrel{v_{1}\in T_{a}}{\kappa(v_{1})=a}}|\{v_{2}\in  V_{2}\, \| \,
        \kappa(v_{2})=0\}|+\sum_{\stackrel{v_{1}\in T_{a}}{\kappa(v_{1})\neq a}}|\{v_{2}\in  V_{2}\, \| \,
        a=\kappa(v_{1})+\kappa(v_{2})\}|\right)\\
        &=\sum_{a\in \kappa(X)}\left(\sum_{\stackrel{v_{1}\in T_{a}}{\kappa(v_{1})=a}}\left(\left(q^{\frac{n-1}{2}}-\eps\eps'\right)\left(q^{\frac{n-3}{2}}+\eps\eps'\right)+1\right)+\sum_{\stackrel{v_{1}\in T_{a}}{\kappa(v_{1})\neq a}}\left(q^{n-2}-\eps\eps' q^{\frac{n-3}{2}}\right)\right)\\
        &=q^{\frac{n-3}{2}}\left(q^{\frac{n-1}{2}}-\eps\eps'\right)\sum_{a\in \kappa(X)}\sum_{v_{1}\in T_{a}}1+\eps\eps'q^{\frac{n-1}{2}}\sum_{a\in \kappa(X)}\sum_{\stackrel{v_{1}\in T_{a}}{\kappa(v_{1})=a}}1\\
        &=q^{\frac{n-3}{2}}\left(q^{\frac{n-1}{2}}-\eps\eps'\right)\sum_{a\in \kappa(X)}|T_{a}|+\eps\eps'q^{\frac{n-1}{2}}\sum_{a\in \kappa(X)}|\{v_{1}\in T_{a}\, \| \,\kappa(v_{1})=a\}|\:.
    \end{align*}}
    If $\alpha\in \kappa(X)$, then $\alpha\kappa(x)\in S_{q}$ and hence there are $\frac{q-3}{2}$ elements $\nu\in\FF_q$ such that $\nu^{2}-\alpha\kappa(x)\in S_{q}$ (this is a classic result in algebra, also reflected in the parameters of the Paley graph). 
    Furthermore, $v_{1}$ is uniquely determined by $B(x,v_{1})$ and $B(y,v_{1})$ since $\{x,y\}$ is a basis of $V_{1}\cong\FF^{2}_{q}$ and $B$ is anisotropic on $V_{1}$. Hence, $|T_{\alpha}|=\frac{1}{4}(q-3)^{2}$ if $\alpha\in \kappa(X)$. For $\alpha\in \kappa(X)$, we also find that
    \begin{align*}
        \left\{v_{1}\in T_{\alpha}\, \| \,\kappa(v_{1})=\alpha\right\}&=\left\{v_{1}\in V_{1}\, \| \, \forall w\in\{x,y\}:(B(w,v_{1}))^{2}-\kappa(v_{1})\kappa(w)\in S_{q},\text{ and }\kappa(v_{1})=\alpha\right\}\\
        &=\left\{v_{1}\in V_{1}\, \| \, \vspan{X,V}\text{ and }\vspan{Y,V}\text{ are secant lines, }V=\pg(v_{1}),\text{ and }\kappa(v_{1})=\alpha\right\}\\
        &=\left\{v_{1}\in V_{1}\, \| \, v_{1}\notin\{\vspan{x},\vspan{y}\},\ \ell\text{ is a secant line, and }\kappa(v_{1})=\alpha\right\}\\
        &=\begin{cases}
        \emptyset&\eps'=-1\\
        \left\{v_{1}\in V_{1}\, \| \, v_{1}\notin\{\vspan{x},\vspan{y}\} \text{ and }\kappa(v_{1})=\alpha\right\}&\eps'=1
        \end{cases}
        \:.
    \end{align*}
    Thus, if $\eps'=1$ and $\alpha\in \kappa(X)$, we find that $\left|\left\{v_{1}\in T_{\alpha}\, \| \,\kappa(v_{1})=\alpha\right\}\right|$ equals $2\cdot \frac{q-5}{2}$ since there are $\frac{q-1}{2}-2$ vector lines where $\kappa$ takes values from $\kappa(X)$, and on each vector line where $\kappa$ takes values from $\kappa(X)$ it takes the value $\alpha$ twice. Consequently,
    \begin{align*}
        |\mathcal{S}|&=q^{\frac{n-3}{2}}\left(q^{\frac{n-1}{2}}-\eps\eps'\right)\frac{1}{4}(q-3)^{2}\left(\frac{q-1}{2}\right)+\eps\eps'q^{\frac{n-1}{2}}\left(\frac{1+\eps'}{2}\right)(q-5)\left(\frac{q-1}{2}\right)\\
        &=\frac{1}{2}(q-1)q^{\frac{n-3}{2}}\left(\frac{1}{4}\left(q^{\frac{n-1}{2}}-\eps\eps'\right)(q-3)^{2}+\frac{1}{2}\eps\left(1+\eps'\right)q(q-5)\right)\\
        &=\frac{1}{8}(q-1)q^{\frac{n-3}{2}}\left(\left(q^{\frac{n-1}{2}}-\eps\eps'\right)(q-3)^{2}+2\eps\left(1+\eps'\right)q(q-5)\right)\:.
    \end{align*}
    The result follows.
\end{proof}

\begin{lm}
 \label{lem:R2relations}
    For $(X,Y)\in R_{2}$ we have the following equalities.
    {\allowdisplaybreaks \begin{align*}
        p^{2}_{1,1}&=2q^{\frac{n-3}{2}}\left(q^{\frac{n-1}{2}}-\eps\right)\\
        p^{2}_{1,2}&=\frac{1}{2}\left(q^{\frac{n-1}{2}}-\eps\right)\left(q^{\frac{n-3}{2}}\left(q-3\right)+2\eps\right)\\
        p^{2}_{1,3}&=\frac{1}{2}q^{\frac{n-3}{2}}\left(q^{\frac{n-1}{2}}-\eps \right)\left(q-1\right)\\
        p^{2}_{2,2}&=\frac{1}{8}q^{\frac{n-3}{2}}\left(\left(q^{\frac{n-1}{2}}-\eps\right)(q-3)^{2}+4\eps q(q-5)\right)\\
        p^{2}_{2,3}&=\frac{1}{8}q^{\frac{n-3}{2}}\left(q^{\frac{n-1}{2}}-\eps\right)\left(q-1\right)\left(q-3\right)\\
        p^{2}_{3,3}&=\frac{1}{8}q^{\frac{n-3}{2}}\left(q^{\frac{n-1}{2}}-\eps\right)\left(q-1\right)^{2}\\
        p^{2}_{4,4}&=\frac{1}{8}q^{\frac{n-3}{2}}\left(\left(q^{\frac{n-1}{2}}-\eps\right)(q-1)+4\eps q\right)\left(q-1\right)\\
        p^{2}_{4,5}&=\frac{1}{8}q^{\frac{n-3}{2}}\left(q^{\frac{n-1}{2}}-\eps \right)\left(q+1\right)\left(q-1\right)\\
        p^{2}_{5,5}&=\frac{1}{8}q^{\frac{n-3}{2}}\left(q^{\frac{n-1}{2}}-\eps \right)\left(q+1\right)^{2}
    \end{align*}}
\end{lm}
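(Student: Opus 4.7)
The plan is to mirror the proof strategy of \Cref{lem:R1relations}, but now with $\ell=\vspan{X,Y}$ a secant line rather than a tangent. First, I would fix coordinates so that $\kappa(X),\kappa(Y)\in S_q$ (WLOG, since the $R_2$ relation is symmetric in the two square classes), and record the contribution of $\ell$ itself to the intersection numbers: of the $q-3$ anisotropic points on $\ell\setminus\{X,Y\}$, exactly $\frac{q-5}{2}$ have $\kappa$-value in $S_q$ and $\frac{q-1}{2}$ in $\ns_q$, so they contribute $\frac{q-5}{2}$ to $p^2_{2,2}$ and $\frac{q-1}{2}$ to $p^2_{4,4}$ and nothing to the other intersection numbers. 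By \Cref{LmPlanesThroughLines}, the planes through $\ell$ are exactly of types $t_2$, $t_{S_q}$, and $t_{\ns_q}$, with known multiplicities.

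Next, I would compute the contribution of each plane type to $p^2_{1,1}$, $p^2_{1,2}$, and $p^2_{4,4}$ directly; the value of $p^2_{2,2}$ is already handled by \Cref{lem:p222}. For a $t_2$ plane $\pi$ with $\pi\cap\mq=m_1\cup m_2$ meeting at a point $Q$, the only tangent line through $X$ in $\pi$ is $\vspan{X,Q}$, and similarly for $Y$. One checks that $\vspan{X,Q}\setminus\ell$ has $q-1$ anisotropic points, all of type $\kappa(X)\in S_q$, all in $R_1(X)\cap R_2(Y)$, and symmetrically for $Y$; the two lines meet only at $Q\in\mq$, so no contribution to $p^2_{1,1}$. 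Counting anisotropic points by tangent lines through $Q$ then gives $\frac{(q-1)^2}{2}$ points of type $\ns_q$ in $\pi\setminus\ell$, all in $R_4(X)\cap R_4(Y)$. For a non-singular plane of type $t_{S_q}$, both $X$ and $Y$ are external to the conic; they each admit two tangent lines, and the four intersection points of an $X$-tangent with a $Y$-tangent (none on $\ell$, since $\ell$ is secant) contribute to $R_1(X)\cap R_1(Y)$. For a $t_{\ns_q}$ plane, $X$ and $Y$ are internal, so there are no tangent lines through them in $\pi$ and the plane contributes nothing to any $p^2_{1,j}$. The remaining point-counts inside each non-singular plane are then a bookkeeping exercise using that there are $\frac{q(q+1)}{2}$ external and $\frac{q(q-1)}{2}$ internal anisotropic points in such a plane, cross-referenced with the numbers of secants through $X$ and $Y$.

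Finally, once $p^2_{1,1}$, $p^2_{1,2}$, $p^2_{2,2}$, and $p^2_{4,4}$ are in hand, the remaining five values $p^2_{1,3}, p^2_{2,3}, p^2_{3,3}, p^2_{4,5}, p^2_{5,5}$ can be extracted purely algebraically from \Cref{LmValencies}, \Cref{LmSumValency}, and \Cref{LmIntersectionNumbersEqualToZero}: the row sums of the intersection array for row $i$ equal $n_i$, and the vanishing pattern of \Cref{LmIntersectionNumbersEqualToZero} eliminates enough entries that the linear system in each row becomes triangular.

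The main obstacle will be the accounting inside the non-singular planes $t_{S_q}$ and $t_{\ns_q}$. Unlike in \Cref{lem:R1relations}, where $\ell$ is tangent and the analogue of \Cref{lem:p122} resolves the tricky double-secant count, here one must simultaneously keep track of (i) whether $X$ (resp.\ $Y$) is internal or external to the plane conic, (ii) how many secants and passants through $X$ and $Y$ coincide with $\ell$, and (iii) the square classes of the points on those secants and tangents. It is precisely the subtlety in (iii) that makes a direct computation of $p^2_{4,4}$ plausible but of, say, $p^2_{3,3}$ painful; this is why the plan routes the more delicate values through the row-sum identities of \Cref{LmSumValency} rather than attempting them plane-by-plane.
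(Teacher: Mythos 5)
Your overall architecture (contribution of $\ell$ itself, plane-by-plane counts through $\ell$ via \Cref{LmPlanesThroughLines}, then linear algebra from \Cref{LmValencies}, \Cref{LmSumValency} and \Cref{LmIntersectionNumbersEqualToZero}) is the paper's strategy, and several pieces of your plan check out: the contribution of $\ell$ ($\tfrac{q-5}2$ to $p^2_{2,2}$, $\tfrac{q-1}2$ to $p^2_{4,4}$), the full $t_2$-plane analysis, the value of $p^2_{1,1}$, and the count behind $p^2_{1,2}$ (which comes to $q-1$ per $t_2$ plane plus $q-5$ per plane of type $t_{\kappa(X)}$, and indeed reproduces the stated formula). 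One caveat on the last step: with only row sums and the vanishing pattern, extracting e.g.\ $p^2_{2,3}$ from row $2$ requires $p^2_{2,1}=p^2_{1,2}$, i.e.\ the symmetry $p^2_{i,j}=p^2_{j,i}$; this is exactly what \Cref{lmHulp} supplies (since $\ell$ is not tangent) and must be invoked.

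The genuine gap is the direct computation of $p^2_{4,4}$ in the non-singular planes, which you dismiss as bookkeeping. In a plane $\pi$ of type $t_{\kappa(X)}$ you need the number of internal points $Z\in\pi\setminus\ell$ with $\vspan{X,Z}$ and $\vspan{Y,Z}$ both secant, and the data you propose to use (the totals of external and internal points, and the numbers of secants and passants through $X$ and $Y$) do not determine it. Writing $a,b,c,d$ for the numbers of internal points off $\ell$ whose lines to $X$ and $Y$ are respectively (secant, secant), (secant, passant), (passant, secant), (passant, passant), the available counts give only $a+b=a+c=\frac{(q-1)(q-3)}{4}$ and $c+d=b+d=\frac{q^2-1}{4}$, a rank-three system in four unknowns, so $a$ is undetermined. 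Splitting the secant--secant intersection points by quadratic type is precisely the character-sum subtlety that \Cref{lem:p122} settles in the tangent case, and your plan contains no analogue for a secant base line; the same issue recurs in the planes of type $t_{\ns_q\cdot\kappa(X)}$. Two repairs are available: either count per plane only the union $(R_2(X)\cap R_2(Y))\cup(R_4(X)\cap R_4(Y))$ (an elementary count) and obtain $p^2_{4,4}$ globally by subtracting the value of $p^2_{2,2}$ from \Cref{lem:p222}; or do what the paper does and compute $p^2_{1,3}$ and the union $p^2_{3,3}+p^2_{5,5}$ instead (tangent--passant and passant--passant points, which need no type-splitting because anisotropic points on a tangent through $X$ automatically have type $\kappa(X)$), after which the row sums, \Cref{lem:p222} and the symmetry from \Cref{lmHulp} separate all remaining entries.
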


\begin{proof}
 We follow the same strategy as in the proof of \Cref{lem:R1relations}.
 The intersection number $p^2_{2,2}$ was already computed in \Cref{lem:p222}.
 If we compute $p^2_{1,1}$, $p^2_{1,3}$, and $p^2_{3,3} + p^2_{5,5}$, we can apply \Cref{LmValencies}, \Cref{LmSumValency}, and \Cref{LmIntersectionNumbersEqualToZero} to find all intersection numbers.
 
 From \Cref{LmPlanesThroughLines} we know the number of planes of each type through $\ell$.
 Take a plane $\pi$ through $\ell$.
 \begin{itemize}
  \item If $\pi$ is of type $t_2$, then any anisotropic point of $\pi$ lies on a secant line through $X$ or a secant line through $Y$.
  \item If $\pi$ is a plane of type $t_{\kappa(X)}$, then it contains two tangent lines through $X$ and two tangent lines through $Y$.
  Since distinct tangent lines intersect in an anisotropic point, $\pi$ contains 4 points of $R_1(X) \cap R_1(Y)$.
  Likewise, the intersection point of any line and a passant line is an anisotropic point.
  Since $X$ and $Y$ lie on $\frac{q-1}2$ passant lines in $\pi$, this implies that $\pi$ contains $2 \frac{q-1}2$ points of $R_1(X) \cap R_3(Y)$, and $\left( \frac{q-1}2\right)^2$ points of $(R_3(X) \cap R_3(Y)) \cup (R_5(X) \cap R_5(Y))$.
  \item If $\pi$ is of type $t_{\ns_q \cdot \kappa(X)}$, $X$ and $Y$ both lie on zero tangent lines and $\frac{q+1}2$ passant lines in $\pi$.
  Therefore, $\pi$ contains no points of $R_1(X)$ and $\left( \frac{q+1}2 \right )^2$ points of $(R_3(X) \cap R_3(Y)) \cup (R_5(X) \cap R_5(Y))$.
 \end{itemize}
 This yields the following equalities.
 \begin{align*}
  p^2_{1,1} &=\frac{1}{2}\left(q^{n-2}-\eps q^{\frac{n-3}{2}}\right)\cdot4\:,\\
  p^2_{1,3}&=\frac{1}{2}\left( q^{n-2}-\eps q^{\frac{n-3}{2}} \right ) \cdot 2 \left( \frac{q-1}{2} \right )\:,\\
  p^2_{3,3}+p^2_{5,5}&=\frac{1}{2}\left(q^{n-2}-\eps q^{\frac{n-3}{2}}\right)\left(\frac{q-1}{2}\right)^{2}+\frac{1}{2}\left(q^{n-2}-\eps q^{\frac{n-3}{2}}\right)\left(\frac{q+1}{2}\right)^{2}\:.
 \end{align*}
 As stated above, the rest of the calculations follow from \Cref{LmValencies} \Cref{LmSumValency} and \Cref{LmIntersectionNumbersEqualToZero}.
 Note that $p^{2}_{0,2}=1$.
\end{proof}

\begin{lm}
 \label{lem:R3relations}
    For $(X,Y)\in R_{3}$ we have the following equalities.
    {\allowdisplaybreaks \begin{align*}
        p^{3}_{1,1}&=2q^{\frac{n-3}{2}}\left(q^{\frac{n-1}{2}}+\eps \right)\\
        p^{3}_{1,2}&=\frac{1}{2}q^{\frac{n-3}{2}}\left(q^{\frac{n-1}{2}}+\eps \right)\left(q-3\right)\\
        p^{3}_{1,3}&=\frac{1}{2}\left(q^{\frac{n-1}{2}}+\eps\right)\left(q^{\frac{n-1}{2}}-q^{\frac{n-3}{2}}-2\eps\right)\\
        p^{3}_{2,2}&=\frac{1}{8}q^{\frac{n-3}{2}}\left(q^{\frac{n-1}{2}}+\eps\right)\left(q-3\right)^{2}\\
        p^{3}_{2,3}&=\frac{1}{8}q^{\frac{n-3}{2}}\left(q^{\frac{n-1}{2}}+\eps \right)\left(q-1\right)\left(q-3\right)\\
        p^{3}_{3,3} &= \frac{1}{8}q^{\frac{n-3}{2}}\left( \left(q^{\frac{n-1}{2}}+\eps\right)(q-1)^{2}-4\eps q(q-3) \right)\\
        p^{3}_{4,4}&=\frac{1}{8}q^{\frac{n-3}{2}}\left(q^{\frac{n-1}{2}}+\eps \right)\left(q-1\right)^2\\
        p^{3}_{4,5}&=\frac{1}{8}q^{\frac{n-3}{2}}\left(q^{\frac{n-1}{2}}+\eps \right)\left(q-1\right)(q+1)\\
        p^{3}_{5,5} &= \frac{1}{8}q^{\frac{n-3}{2}}\left( \left(q^{\frac{n-1}{2}}+\eps\right)(q+1)-4\eps q \right)\left(q+1\right)
    \end{align*}}
\end{lm}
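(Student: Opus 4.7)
The plan is to follow the same template used in the proofs of Lemmas \ref{lem:R1relations} and \ref{lem:R2relations}. Fix $(X,Y) \in R_3$ and set $\ell = \vspan{X,Y}$, which is a passant line to $\mq$. First, I would read off from the passant-line column of Lemma \ref{LmPlanesThroughLines} the number of planes of each type $t_0$, $t_2$, $t_{S_q}$, $t_{\ns_q}$ through $\ell$; no plane of type $t_1$ occurs in this case.

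For each plane type I would then determine the pattern of tangent, secant and passant lines through $X$ and $Y$ in that plane, and thereby the contribution of the anisotropic points of $\pi\setminus\ell$ to each intersection $R_i(X)\cap R_j(Y)$. The key local observations are: (i) in a plane of type $t_0$ with unique isotropic point $Q$, the only tangents through $X$ and $Y$ in $\pi$ are $\vspan{X,Q}$ and $\vspan{Y,Q}$, which meet precisely at the isotropic point $Q$, so no anisotropic point of $R_1(X)\cap R_1(Y)$ arises from such a plane; (ii) planes of type $t_2$ contain no tangent through $X$ or $Y$, so they contribute only to intersections with both indices in $\{2,4\}$; (iii) in a plane of type $t_{S_q}$, both $X$ and $Y$ are external to the conic $\pi\cap\mq$, each lying on $2$ tangents, $\tfrac{q-1}{2}$ secants and $\tfrac{q-1}{2}$ passants in $\pi$, whereas in a plane of type $t_{\ns_q}$ both $X$ and $Y$ are internal, with $0$ tangents, $\tfrac{q+1}{2}$ secants and $\tfrac{q+1}{2}$ passants through each.

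Using these local data, I would compute directly only the three intersection numbers $p^3_{1,1}$, $p^3_{1,3}$ and $p^3_{3,3}+p^3_{5,5}$ by summing contributions over all planes through $\ell$; the value of $p^3_{2,2}$ is already furnished by Lemma \ref{lem:p222}. All remaining $p^3_{i,j}$ then follow mechanically from Lemma \ref{LmIntersectionNumbersEqualToZero} together with the row-sum identities $n_i = \sum_{j}p^3_{i,j}$ of Lemma \ref{LmSumValency} and the valencies of Lemma \ref{LmValencies}, exactly as in the closing step of the proof of Lemma \ref{lem:R2relations}.

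The main obstacle is accurately tracking quadratic types, i.e.\ distinguishing the secant-secant, secant-passant and passant-passant intersections inside each non-singular plane through $\ell$ into their $S_q$- and $\ns_q$-parts so that $R_2$ is separated from $R_4$ and $R_3$ from $R_5$. Lemma \ref{lem:p122} provides the required split of the secant-secant count on each conic section through $\ell$, and once this is in place the remainder is a routine, if tedious, linear combination closely parallel to the $R_2$ case.
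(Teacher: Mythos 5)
Your framework is the same as the paper's: split the anisotropic points off $\ell=\vspan{X,Y}$ according to the planes through $\ell$ via \Cref{LmPlanesThroughLines}, import $p^3_{2,2}$ from \Cref{lem:p222}, and recover everything else from \Cref{LmIntersectionNumbersEqualToZero}, \Cref{LmSumValency} and \Cref{LmValencies}. Your choice of directly computed quantities ($p^3_{1,1}$, $p^3_{1,3}$, $p^3_{3,3}+p^3_{5,5}$) differs from the paper's ($p^3_{1,1}$, $p^3_{1,2}$, $p^3_{2,2}+p^3_{4,4}$), and one can check that it does close the linear system, so the plan is viable in principle. However, as written it contains concrete slips that would produce wrong numbers or rest on a false step.

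The main one: by summing only contributions of points of $\pi\setminus\ell$ over the planes $\pi$ through $\ell$, you omit the $q-1$ anisotropic points of $\ell\setminus\{X,Y\}$ themselves. Since $\ell$ is passant, each of these lies in $(R_3(X)\cap R_3(Y))\cup(R_5(X)\cap R_5(Y))$, so your $p^3_{3,3}+p^3_{5,5}$ comes out short by $q-1$, and the error propagates through the row sums to $p^3_{3,3}$, $p^3_{5,5}$, $p^3_{4,5}$, $p^3_{4,4}$. (This is precisely why the paper computes $p^3_{1,1}$, $p^3_{1,2}$ and $p^3_{2,2}+p^3_{4,4}$, which receive no contribution from $\ell$; if you keep your choice you must add the on-line points and also remember that inside each conic plane $\ell$ is itself one of the passant lines through $X$ and through $Y$, so the relevant counts of \emph{other} passants are $\tfrac{q-3}{2}$ in the external case and $\tfrac{q-1}{2}$ in the internal case.) Second, \Cref{lem:p122} is stated for $X,Y$ spanning a \emph{tangent} to the conic, so it cannot supply a per-plane split of the secant--secant count when $\ell$ is passant; fortunately no per-plane split is needed, since the separation of $R_2$ from $R_4$ and $R_3$ from $R_5$ enters only through the global value $p^3_{2,2}$ of \Cref{lem:p222} together with the row sums. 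Finally, your observation (ii) is false (in a plane of type $t_2$ every anisotropic point lies on the tangent joining it to the singular point $Q$), but it is also vacuous here: by \Cref{LmPlanesThroughLines} a passant line lies on no planes of type $t_1$ \emph{or} $t_2$, so only types $t_0$, $t_{S_q}$, $t_{\ns_q}$ occur; note too that the $t_0$ planes are not negligible for your chosen targets, as each contributes $q-1$ points to $R_1(X)\cap R_3(Y)$ and $(q-1)^2$ passant--passant points.
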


\begin{proof}
 We repeat the procedure from above.
 Recall that we computed $p^3_{2,2}$ in \Cref{lem:p222}.
 Now we compute $p^3_{1,1}$, $p^3_{1,2}$ and $p^3_{2,2} + p^3_{4,4}$.
 Take a plane $\pi$ through $\ell$.
 \begin{itemize}
  \item If $\pi$ is of type $t_0$, then any anisotropic point of $\pi$ lies on a passant line through $X$ or a passant line through $Y$.
  \item If $\pi$ is a plane of type $t_{\kappa(X)}$, then it contains two tangent lines through $X$ and two tangent lines through $Y$.
  Since distinct tangent lines intersect in an anisotropic point, $\pi$ contains 4 points of $R_1(X) \cap R_1(Y)$.
  Now take a tangent line $\ell_X$ through $X$.
  There are $\frac{q-1}2$ secant lines through $Y$, one of which intersects $\ell_X$ in a point of $\mq$.
  Hence, $\pi$ contains $2\left( \frac{q-1}2 - 1 \right)$ points of $R_1(X) \cap R_2(Y)$.
  Lastly, $X$ and $Y$ each lie on $\frac{q-1}2$ secant lines in $\pi$.
  Thus, $\pi$ contains $\left( \frac{q-1}2 \right)^2$ points lying on a secant line through $X$ and a secant line through $Y$.
  Recall that the conic $\mq \cap \pi$ contains 4 points lying on a tangent line through $X$ or $Y$, hence $q-3$ points lying on secant lines through $X$ and $Y$.
  Therefore, $\pi$ contains $\left( \frac{q-1}2 \right)^2 - (q-3)$ points of $(R_2(X) \cap R_2(Y)) \cup (R_4(X) \cap R_4(Y))$.
  \item If $\pi$ is of type $t_{\ns_q \cdot \kappa(X)}$, $X$ and $Y$ both lie on zero tangent lines and $\frac{q+1}2$ secant lines in $\pi$.
  Every point of $\mq \cap \pi$ lies on secant lines through $X$ and $Y$.
  Therefore, $\pi$ contains no points of $R_1(X)$ and $\left( \frac{q+1}2 \right )^2 - (q+1)$ points of $(R_2(X) \cap R_2(Y)) \cup (R_4(X) \cap R_4(Y))$.
 \end{itemize}
 Using \Cref{LmPlanesThroughLines}, we obtain the following equalities.
 \begin{align*}
  p^{3}_{1,1}&=\frac{1}{2}\left(q^{n-2}+\eps q^{\frac{n-3}{2}}\right)\cdot 4\:,\\
  p^{3}_{1,2}&=\frac{1}{2}\left(q^{n-2}+\eps q^{\frac{n-3}{2}}\right)\cdot 2\left(\frac{q-3}{2}\right)\:,\\
  p^{3}_{2,2}+p^{3}_{4,4}&=\frac{1}{2}\left(q^{n-2}+\eps q^{\frac{n-3}{2}}\right)\left(\left(\frac{q-1}{2}\right)^{2}-(q-3)\right)+\frac{1}{2}\left(q^{n-2}+\eps q^{\frac{n-3}{2}}\right)\left(\left(\frac{q+1}{2}\right)^{2}-(q+1)\right)\:.
 \end{align*}
 The remaining intersection numbers can be calculated using \Cref{LmValencies}, \Cref{LmSumValency}, and \Cref{LmIntersectionNumbersEqualToZero}. Recall that $p^{3}_{0,3}=1$.
\end{proof}

\begin{lm}
    For $(X,Y)\in R_{4}$ we have the following equalities.
    {\allowdisplaybreaks \begin{align*}
        p^{4}_{1,4}&=\frac{1}{2}\left(q^{\frac{n-1}{2}}-\eps\right)\left(q^{\frac{n-1}{2}}-q^{\frac{n-3}{2}}+2\eps\right)\\
        p^{4}_{1,5}&=\frac{1}{2}q^{\frac{n-3}{2}}\left(q^{\frac{n-1}{2}}-\eps \right)\left(q+1\right)\\
        p^{4}_{2,4} &= \frac{1}{8}q^{\frac{n-3}{2}}\left( \left(q^{\frac{n-1}{2}}-\eps\right)(q-1) + 4\eps q\right)\left(q-3\right)\\
        p^{4}_{2,5}&=\frac{1}{8}q^{\frac{n-3}{2}}\left(q^{\frac{n-1}{2}}-\eps \right)\left(q+1\right)\left(q-3\right)\\
        p^{4}_{3,4}&=\frac{1}{8}q^{\frac{n-3}{2}}\left(q^{\frac{n-1}{2}}-\eps \right)\left(q-1\right)^{2}\\
        p^{4}_{3,5}&=\frac{1}{8}q^{\frac{n-3}{2}}\left(q^{\frac{n-1}{2}}-\eps \right)\left(q+1\right)\left(q-1\right)
    \end{align*}}
\end{lm}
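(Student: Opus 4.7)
My plan is to follow the strategy of Lemmas \ref{lem:R1relations}--\ref{lem:R3relations}: writing $\ell = \vspan{X,Y}$, every anisotropic point $Z \neq X,Y$ either lies on $\ell$ or determines a unique plane $\pi = \vspan{X,Y,Z}$ through $\ell$, so I can sum contributions from $\ell$ and from each plane type through $\ell$ using the plane counts in Lemma \ref{LmPlanesThroughLines}. Since $(X,Y) \in R_4$ makes $\ell$ a secant, Lemma \ref{lmHulp} provides an automorphism of $\mq$ swapping $X$ and $Y$, hence $p^4_{i,j} = p^4_{j,i}$; together with Lemma \ref{LmIntersectionNumbersEqualToZero} this leaves only the six listed intersection numbers as potentially nonzero. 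Without loss of generality I take $\kappa(X) = S_q$ and $\kappa(Y) = \ns_q$; the relevant plane types through $\ell$ are then $t_2$, $t_{S_q}$, and $t_{\ns_q}$.

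I will compute $p^4_{1,4}$ and $p^4_{1,5}$ directly by tracking the tangent lines through $X$ in each plane type. In a $t_2$ plane with $\mq \cap \pi = m_1 \cup m_2$ meeting at $P$, the unique tangent $\vspan{X,P}$ contributes its $q-1$ type-$S_q$ anisotropic points to $R_1(X) \cap R_4(Y)$, since every non-tangent line through $Y$ in a $t_2$ plane is secant. In a $t_{S_q}$ plane, $X$ is external (two tangents through $X$) and $Y$ is internal (no tangents through $Y$, $(q+1)/2$ secants, $(q+1)/2$ passants). For each tangent $\vspan{X,P_i}$ through $X$, the $q+1$ points on it biject with the pencil of lines through $Y$; the values $Z = X$ and $Z = P_i$ correspond to the secants $\ell$ and $\vspan{Y,P_i}$ respectively (the latter being secant because $Y$ has no tangents), leaving $(q-3)/2$ anisotropic $Z$ with $\vspan{Y,Z}$ secant and $(q+1)/2$ with $\vspan{Y,Z}$ passant. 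In a $t_{\ns_q}$ plane $X$ is internal, so there is no tangent through $X$ and no contribution. Multiplying by the plane counts from Lemma \ref{LmPlanesThroughLines} and simplifying gives the claimed formulas for $p^4_{1,4}$ and $p^4_{1,5}$.

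The five row-sum equations supplied by Lemma \ref{LmSumValency}, after applying Lemma \ref{LmIntersectionNumbersEqualToZero}, have rank four on the six unknowns, so one further direct computation is needed. I will obtain $p^4_{2,4}$, counting type-$S_q$ points $Z$ with both $\vspan{X,Z}$ and $\vspan{Y,Z}$ secant. The line $\ell$ contributes $(q-3)/2$ (the type-$S_q$ anisotropic points on $\ell \setminus \{X,Y\}$); each $t_2$ plane contributes $(q-1)(q-3)/2$, obtained by subtracting from the $(q-1)^2/2$ type-$S_q$ anisotropic points in $\pi \setminus \ell$ the $q-1$ lying on $\vspan{X,P}$ (all remaining points lie on a secant from both $X$ and $Y$, since $\vspan{Y,P}$ contains no type-$S_q$ anisotropic); and the $t_{S_q}$ and $t_{\ns_q}$ plane contributions are computed by the same pencil-parametrisation argument used for $p^4_{1,4}$, now applied to each non-$\ell$ secant through $X$. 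Once $p^4_{2,4}$ is known, Lemmas \ref{LmValencies} and \ref{LmSumValency} determine $p^4_{2,5}$, $p^4_{3,4}$, and $p^4_{3,5}$ uniquely.

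The main obstacle lies in the bookkeeping for the $t_{S_q}$ and $t_{\ns_q}$ contributions to $p^4_{2,4}$, where one of $X,Y$ is external and the other internal, so the roles of tangent/secant/passant pencils through $X$ and $Y$ are no longer symmetric. The decisive tool is the bijection between the $q+1$ points of a line $m$ and the pencil of $q+1$ lines through a point off $m$; together with the isolation of the exceptional values $Z = X$ and $Z$ isotropic (lying on a tangent from $Y$ in $t_{\ns_q}$, on a secant from $Y$ in $t_{S_q}$), this keeps the case analysis fully parallel to those in Lemmas \ref{lem:R2relations} and \ref{lem:R3relations}.
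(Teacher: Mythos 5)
Your setup is fine: the use of \Cref{lmHulp} for $p^4_{i,j}=p^4_{j,i}$, the vanishing pattern from \Cref{LmIntersectionNumbersEqualToZero}, the rank-four count for the row-sum system, and your direct computations of $p^4_{1,4}$ and $p^4_{1,5}$ via \Cref{LmPlanesThroughLines} are all correct (the paper computes $p^4_{1,5}$ in essentially the same way and recovers $p^4_{1,4}$ from the row sums). The genuine gap is in your third direct count, namely the contributions of the non-singular planes of types $t_{S_q}$ and $t_{\ns_q}$ to $p^4_{2,4}$. There you must count, inside a conic plane, the points $Z$ \emph{of square type} such that both $\vspan{X,Z}$ and $\vspan{Y,Z}$ are secant. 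The pencil bijection between the points of a secant line $m$ through $X$ and the lines through $Y$ only tells you how many points of $m$ have secant join to $Y$; it gives no information about how many of those are of type $S_q$, because on a secant line the two quadratic types are mixed and the pencil argument is blind to them. This typed split inside a conic plane is precisely the hard step of the whole paper: wherever it is needed (for $p^1_{2,2}$, $p^2_{2,2}$, $p^3_{2,2}$) it is supplied by \Cref{lem:p122} or by the algebraic computation of \Cref{lem:p222}, not by a pencil count. So the claim that your bookkeeping is ``fully parallel'' to \Cref{lem:R2relations} and \Cref{lem:R3relations} is not accurate: in those lemmas the per-plane pencil arguments only ever produce unions over \emph{both} types, and the split is imported from a separate lemma.

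The repair is available with a tool you already invoked but did not exploit: since $\ell$ is secant, \Cref{lmHulp} gives $p^4_{2,4}=p^4_{4,2}$, so instead of the typed count you can count the type-free union, i.e.\ all anisotropic $Z\neq X,Y$ with $\vspan{X,Z}$ and $\vspan{Y,Z}$ both secant (this set is exactly $(R_2(X)\cap R_4(Y))\cup(R_4(X)\cap R_2(Y))$), and then halve. In that version the pencil arguments do suffice, but your $\ell$- and $t_2$-contributions must also be taken over both types ($q-3$ and $(q-1)(q-3)$ per object, rather than $(q-3)/2$ and $(q-1)(q-3)/2$). This is exactly the paper's strategy for $R_4$ and $R_5$: it computes the mixed unions $p^4_{3,5}+p^4_{5,3}$ (respectively $p^5_{2,4}+p^5_{4,2}$), which are type-blind per plane, and halves them via \Cref{lmHulp}, thereby avoiding any analogue of \Cref{lem:p222} in these two cases.
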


\begin{proof}
 We will start by computing $p^4_{1,5}$ and $p^4_{3,5} + p^4_{5,3}$.
 Take a plane $\pi$ through $\ell$.
 \begin{itemize}
  \item If $\pi$ is of type $t_2$, then it contains no passant lines.
  \item If $\pi$ is of type $t_{\kappa(X)}$, then it contains two tangent lines through $X$, $\frac{q-1}2$ passant lines through $X$, and $\frac{q+1}2$ passant lines through $Y$.
  Since each point of a passant line is anisotropic, $\pi$ contains $2 \frac{q+1}2$ points of $R_1(X) \cap R_5(Y)$ and $\frac{q-1}2 \cdot\frac{q+1}2$ points of $(R_3(X) \cap R_5(Y)) \cup (R_5(X) \cap R_3(Y))$.
  \item If $\pi$ is of type $t_{\ns_q \cdot \kappa(X)}$, then it contains no tangent lines through $X$, $\frac{q+1}2$ passant lines through $X$, and $\frac{q-1}2$ passant lines through $Y$.
  This again yields $\frac{q-1}2 \cdot\frac{q+1}2$ points of $(R_3(X) \cap R_5(Y)) \cup (R_5(X) \cap R_3(Y))$.
 \end{itemize}
 By \Cref{lmHulp}, $p^4_{3,5} = p^4_{5,3}$.
 Using \Cref{LmPlanesThroughLines} this implies that
 \begin{align*}
  p^{4}_{1,5} &= \frac{1}{2}\left( q^{n-2}-\eps q^{\frac{n-3}{2}} \right) \cdot2 \left( \frac{q+1}{2} \right )\:,\\
  2p^{4}_{3,5} &= \left( q^{n-2}-\eps q^{\frac{n-3}{2}} \right) \left( \frac{q-1}{2} \right) \left( \frac{q+1}{2} \right) \:.
 \end{align*}
 The remaining intersection numbers can be calculated from \Cref{LmValencies}, \Cref{LmSumValency}, and \Cref{LmIntersectionNumbersEqualToZero}.
\end{proof}

\begin{lm}
    For $(X,Y)\in R_{5}$ we have the following equalities.
    \begin{align*}
        p^{5}_{1,4}&=\frac{1}{2}q^{\frac{n-3}{2}}\left(q^{\frac{n-1}{2}}+\eps \right)\left(q-1\right)\\
        p^{5}_{1,5}&=\frac{1}{2}\left(q^{\frac{n-1}{2}}+\eps\right)\left(q^{\frac{n-1}{2}}+q^{\frac{n-3}{2}}-2\eps\right)\\
        p^{5}_{2,4}&=\frac{1}{8}q^{\frac{n-3}{2}}\left(q^{\frac{n-1}{2}}+\eps \right)\left(q-1\right)\left(q-3\right)\\
        p^{5}_{2,5}&=\frac{1}{8}q^{\frac{n-3}{2}}\left(q^{\frac{n-1}{2}}+\eps \right)(q+1)(q-3)\\
        p^{5}_{3,4}&=\frac{1}{8}q^{\frac{n-3}{2}}\left(q^{\frac{n-1}{2}}+\eps \right)\left(q-1\right)^2\\
        p^{5}_{3,5}&=\frac{1}{8}q^{\frac{n-3}{2}}\left(\left( q^{\frac{n-1}2}+\eps\right)(q+1) - 4\eps q \right)\left(q-1\right)
    \end{align*}
\end{lm}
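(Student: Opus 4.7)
The argument follows the template of the preceding lemmas: use \Cref{LmPlanesThroughLines} to enumerate the planes through $\ell = \langle X,Y \rangle$, compute a few intersection numbers directly from the plane-by-plane analysis, and deduce the remaining ones via \Cref{LmValencies}, \Cref{LmSumValency}, and \Cref{LmIntersectionNumbersEqualToZero}. Since $(X,Y)\in R_5$, the line $\ell$ is passant, so the only plane types through $\ell$ are $t_0$, $t_{S_q}$, and $t_{\ns_q}$. Because $\ell$ is not tangent, \Cref{lmHulp} provides a collineation stabilising $\mq$ and swapping $X$ with $Y$, whence $p^5_{i,j} = p^5_{j,i}$. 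Together with \Cref{LmIntersectionNumbersEqualToZero}, only the six intersection numbers listed in the statement can be non-zero. Assume without loss of generality that $\kappa(X)=S_q$ and $\kappa(Y)=\ns_q$.

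I would first compute $p^5_{1,5}$ by counting, over all planes $\pi$ through $\ell$, the anisotropic $Z\in\pi\setminus\ell$ with $\langle X,Z\rangle$ tangent and $\langle Y,Z\rangle$ passant. A plane of type $t_0$ with vertex $V$ contributes the $q-1$ points of $\langle V,X\rangle\setminus(\ell\cup\{V\})$: the line $\langle V,X\rangle$ is the unique tangent through $X$ in the plane, and the line from $Y$ to any such $Z$ necessarily avoids $V$ and hence is a passant. A $t_{S_q}$ plane contributes $q-1$ as well: $X$ is external with two tangents, on each of which $(q-1)/2$ of the $q-1$ non-tangency anisotropic points lie on passants through the internal point $Y$ (the $q$ non-$\ell$ lines through $Y$ split as $(q+1)/2$ secants and $(q-1)/2$ passants, with exactly one secant $\langle Y,P\rangle$ through the tangency point $P$ omitted by the bijection $Z\mapsto\langle Y,Z\rangle$). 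A $t_{\ns_q}$ plane contributes $0$ because $X$ is internal and has no tangent lines in it. Substituting the multiplicities from \Cref{LmPlanesThroughLines} and simplifying recovers the stated value of $p^5_{1,5}$.

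For $p^5_{3,5}$ I would exploit the symmetry $p^5_{3,5}=p^5_{5,3}$ to count instead the total number of anisotropic $Z$ with both $\langle X,Z\rangle$ and $\langle Y,Z\rangle$ passant, which equals $2p^5_{3,5}$. Crucially this total includes the $q-1$ anisotropic points of $\ell\setminus\{X,Y\}$, for which both lines collapse to the passant $\ell$ itself. A plane of type $t_0$ contributes a further $(q-1)^2$ points, namely the anisotropic points of $\pi\setminus\ell$ that lie neither in $\langle V,X\rangle$ nor in $\langle V,Y\rangle$. In each non-singular plane, the $Z\in\pi\setminus\ell$ with both lines passant are the intersection points of the $(q-3)/2$ or $(q-1)/2$ non-$\ell$ passants through $X$ with the $(q-1)/2$ or $(q-3)/2$ non-$\ell$ passants through $Y$; either way the product is $(q-1)(q-3)/4$. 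Summing with \Cref{LmPlanesThroughLines} and halving gives the stated value of $p^5_{3,5}$.

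The remaining four intersection numbers are then uniquely determined by the sum identities of \Cref{LmSumValency}: $p^5_{1,4}$ and $p^5_{3,4}$ from the rows $i=1$ and $i=3$, then $p^5_{2,4}$ and $p^5_{2,5}$ from the rows $i=4$ and $i=5$, with the row $i=2$ providing a consistency check. The main pitfall is the contribution of $Z\in\ell\setminus\{X,Y\}$ to $p^5_{3,5}+p^5_{5,3}$, which is easy to overlook when one restricts attention to $\pi\setminus\ell$ in each plane; a small check at $q=5$, $n=3$ confirms that omitting it produces a formula off by exactly the additive constant $(q-1)/2$ inside $p^5_{3,5}$.
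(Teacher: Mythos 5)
Your proposal is correct and follows essentially the paper's own method: plane-by-plane counting through $\ell$ via \Cref{LmPlanesThroughLines}, the symmetry $p^5_{i,j}=p^5_{j,i}$ from \Cref{lmHulp}, and the linear relations of \Cref{LmValencies}, \Cref{LmSumValency}, and \Cref{LmIntersectionNumbersEqualToZero}. The only difference is the choice of seed entries — the paper computes $p^5_{1,4}$ and $p^5_{2,4}+p^5_{4,2}$ (secant-based counts, which automatically exclude points of $\ell$), whereas you compute $p^5_{1,5}$ and $p^5_{3,5}+p^5_{5,3}$ (passant-based, requiring the $q-1$ points of $\ell\setminus\{X,Y\}$, which you correctly include); both seeds determine the remaining numbers by the same sum identities and yield the stated values.
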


\begin{proof}
 We begin by calculating $p^5_{1,4}$ and $p^5_{2,4} + p^5_{4,2}$.
 Take a plane $\pi$ through $\ell$.
 \begin{itemize}
  \item If $\pi$ is of type $t_0$, it contains no secant lines.
  \item If $\pi$ is of type $t_{\kappa(X)}$, then it contains two tangent lines through $X$, $\frac{q-1}2$ secant lines through $X$, and $\frac{q+1}2$ secant lines through $Y$.
  Take a tangent line $\ell_X$ through $X$.
  Then there is a unique secant line through $Y$ intersecting $\ell_X$ in a point of $\mq$, namely $\vspan{Y, \ell_X \cap \mq}$.
  Hence, $\pi$ contains $2\left( \frac{q+1}2 - 1 \right)$ points of $R_1(X) \cap R_4(Y)$.
  There are $\frac{q-1}2\cdot \frac{q+1}2$ points of $\pi$ lying on secant lines through $X$ and $Y$.
  These include all the points of the conic $\mq \cap \pi$ except for the two points lying on a tangent line through $X$.
  Thus, $\pi$ contains $\frac{q-1}2 \cdot\frac{q+1}2 - (q-1)$ points of $(R_2(X) \cap R_4(Y)) \cup (R_4(X) \cap R_2(Y))$.
  \item If $\pi$ is of type $t_{\ns_q \cdot \kappa(X)}$, then it contains no tangent lines through $X$, $\frac{q+1}2$ passant lines through $X$, and $\frac{q-1}2$ passant lines through $Y$.
  Similarly as in the previous point $\pi$ contains $\frac{q-1}2 \cdot\frac{q+1}2 - (q-1)$ points of $(R_2(X) \cap R_4(Y)) \cup (R_4(X) \cap R_2(Y))$.
 \end{itemize}
 By \Cref{lmHulp}, $p^5_{2,4} = p^5_{4,2}$.
 Using \Cref{LmPlanesThroughLines} this yields
 \begin{align*}
  p^{5}_{1,4} &= \frac{1}{2} \left( q^{n-2} + \eps q^{\frac{n-3}{2}} \right) \cdot 2 \left( \frac{q-1}{2} \right) \:, \\
  2p^{5}_{2,4} &= \frac{1}{4} \left( q^{n-2}+\eps q^{\frac{n-3}{2}} \right) (q-1) (q-3) \: .
 \end{align*}
 The remaining intersection numbers once again follow from \Cref{LmValencies}, \Cref{LmSumValency}, and \Cref{LmIntersectionNumbersEqualToZero}.
\end{proof}

\subsection{The matrices of eigenvalues and dual eigenvalues}\label{SecPQmatrix}

We will describe how to compute the eigenvalues of the association scheme in an efficient way.
We will use the notation from \Cref{SubSec:Assoc}.
In addition, the following notation will also be helpful.

\begin{df}
 Given a vector $v \in \RR^\pp$, let $v^{S_q}$ denote the vector defined by
 \[
  v^{S_q} (X) = \begin{cases}
   v(X) & \text{if } \kappa(X) = S_q, \\
   -v(X) & \text{if } \kappa(X) = \ns_q.
  \end{cases}
 \]
\end{df}

First of all, the first eigenspace $V_0$ of the scheme is spanned by the all-one vector $\one$, and $A_i \one = n_i \one$.
Since the scheme is imprimitive, respecting an equivalence relation with two classes, the vector $\one^{S_q}$ spans another 1-dimensional eigenspace of the scheme, say $V_1$.
Note that $A_i \one^{S_q}$ equals $n_i \one^{S_q}$ if $i \leq 3$ and $-n_i \one^{S_q}$ if $i \geq 4$.

Consider the intersection matrices $B_1, \dots, B_5$ and recall that the columns of $\mathbf Q$ form the (up to reordering and rescaling) unique basis that simultaneously diagonalises all intersection matrices.
Note that for $i \leq 3$,
\(
 B_i = \begin{pmatrix}
  C_i & O \\ O & D_i
 \end{pmatrix}
\)
for some $C_i \in \RR^{4\times 4}$ and $D_i \in \RR^{2\times 2}$.
Similarly, for $i \geq 4$,
\(
 B_i = \begin{pmatrix}
  O & C_i \\ D_i & O
 \end{pmatrix}
\)
for some $C_i \in \RR^{4 \times 2}$ and $D_i \in \RR^{2 \times 4}$.

The spectrum of $B_4$ is symmetric around zero.
Indeed, for $v \in \RR^4$ and $w \in \RR^2$, it holds that 
\[
B_4 \begin{pmatrix} v \\ w \end{pmatrix} = \lambda \begin{pmatrix} v \\ w \end{pmatrix} 
 \iff 
B_4 \begin{pmatrix} v \\ -w \end{pmatrix} = -\lambda \begin{pmatrix} v \\ -w \end{pmatrix}.
\]
Moreover, $\lambda$ is an eigenvalue of $B_4$ if and only if $\lambda^2$ is an eigenvalue of $B_4^2 = \begin{pmatrix}
 C_4 D_4 & O \\ O & D_4 C_4
\end{pmatrix}$, and $C_4 D_4$ has the same non-zero eigenvalues as $D_4 C_4$ with the same multiplicities.
Since $C_4$ and $D_4$ are both non-constant matrices with constant row sums, they must have rank 2.
We conclude that any basis that diagonalises $B_4$, after reordering, must be of the form
\begin{align*}
 v_0 = \begin{pmatrix} \one \\ \one \end{pmatrix}, &&
 v_1 = \begin{pmatrix} \one \\ -\one \end{pmatrix}, &&
 v_2 = \begin{pmatrix} u_2 \\ w_2 \end{pmatrix}, &&
 v_3 = \begin{pmatrix} u_2 \\ -w_2 \end{pmatrix}, &&
 v_4 = \begin{pmatrix} u_4 \\ \zero \end{pmatrix}, &&
 v_5 = \begin{pmatrix} u_5 \\ \zero \end{pmatrix}, &&
\end{align*}
where the top and bottom parts of these vectors lie in $\RR^4$ and $\RR^2$ respectively, where $v_2$ has a positive eigenvalue and $u_2, w_2 \neq \zero$, and where $u_4, u_5 \in \ker(D_4)$.

We can now compute the missing eigenvalues of $B_4$.
Let $\Tr$ denote the trace function on square matrices, i.e.\ the sum of the elements on the main diagonal.
If a matrix $A$ has eigenvalues $\lambda_i$ with respective multiplicities $m_i$, then
\begin{align} \label{Eq:Trace}
 \Tr(A^k) = \sum_i m_i \lambda_i^k,
\end{align}
for every non-negative integer $k$.
We find that
\[
 \Tr(C_4 D_4) = \Tr(D_4 C_4) = \sum_{i=0}^3 \sum_{j=4}^5 p^i_{4,j} p^j_{4,i} = n_4^2 + \left( \frac 14 q^\frac{n-3}2 (q^2-1) \right)^2.
\]
Thus, the eigenvalues of $v_2$ and $v_3$ are $\pm \frac 14 q^\frac{n-3}2 (q^2-1)$.

The eigenvalues of $B_5$ are easy to compute from the eigenvalues of $B_4$ using the fact that $A_5 = \begin{pmatrix} O & J \\ J & O \end{pmatrix} - A_4$.

We now turn our attention to the matrices $B_i$ with $i \leq 3$, and sketch how to compute their eigenvalues.
Since $v_2$ is an eigenvector of $B_i$ for some eigenvalue $\lambda$, $u_2$ and $w_2$ are $\lambda$-eigenvectors of $C_i$ and $D_i$ respectively.
In particular, every eigenvalue of $D_i$ is an eigenvalue of $C_i$, and $v_2$ and $v_3$ have the same eigenvalue with respect to $B_i$.
This eigenvalue is easy to compute as it equals $\Tr(D_i) - n_i$.
It remains to determine the eigenvalues of $C_i$ for $u_4$ and $u_5$.
We have already computed two eigenvalues of $C_i$.
The other two eigenvalues can be computed from $\Tr(C_i)$ and $\det(C_i)$ since these equal the sum and product of the eigenvalues of $C_i$ respectively.
This allows us to find the sets $\{p_i(4), p_i(5)\}$ for $i \leq 3$.
It remains to determine which of these eigenvalues in each of these sets correspond to the same eigenspace.
This can be done easily using \Cref{Res:SumEigenvalues}.

From these arguments, the matrix $\mathbf P$ can be computed.
To compute $\mathbf Q$, it suffices to compute the multiplicities $m_0, \dots, m_5$ and use \Cref{ResIdentityEigenvalueMatrices}.
We have already determined that $m_0 = m_1 = 1$.
Moreover, $v$ is a vector of the eigenspace $V_2$ if and only if $v^{S_q}$ is a vector of $V_3$.
Therefore, $m_2 = m_3$.
Furthermore, the multiplicities $m_i$ form the top row of $\mathbf Q$ and since $\mathbf Q \mathbf P = |\pp| I$, it follows that
\[
 \begin{pmatrix}
  m_0 & \dots & m_5
 \end{pmatrix} \mathbf P = \begin{pmatrix}
  |\pp| & 0 & \dots & 0
 \end{pmatrix}.
\]
This yields linear equations from which $m_2 = m_3$, $m_4$, and $m_5$ can be computed.

All these computations result in the following theorem.

\begin{table}
 \centering
\begin{minipage}{.4\textwidth}
 \begin{center}
\begin{sideways}
 $\displaystyle \mathbf P = \begin{pmatrix}
 1 & q^{n-1}-1 
   & \frac14 q^{\frac{n-1}2}\left(q^{\frac{n-1}2} + \eps\right)(q-3) 
   & \frac14 q^{\frac{n-1}2}\left(q^{\frac{n-1}2} - \eps\right)(q-1) 
   & \frac14 q^{\frac{n-1}2}\left(q^{\frac{n-1}2} + \eps\right)(q-1) 
   & \frac14 q^{\frac{n-1}2}\left(q^{\frac{n-1}2} - \eps\right)(q+1) \\[1em]
 1 & q^{n-1}-1 
   & \frac14 q^{\frac{n-1}2}\left(q^{\frac{n-1}2} + \eps\right)(q-3) 
   & \frac14 q^{\frac{n-1}2}\left(q^{\frac{n-1}2} - \eps\right)(q-1) 
   & -\frac14 q^{\frac{n-1}2}\left(q^{\frac{n-1}2} + \eps\right)(q-1) 
   & -\frac14 q^{\frac{n-1}2}\left(q^{\frac{n-1}2} - \eps\right)(q+1) \\[1em]
 1 & \eps q^{\frac{n-3}2}-1 
   & \frac14 \eps q^{\frac{n-3}2}(q+1)(q-3)
   & -\frac14 \eps q^{\frac{n-3}2}(q-1)^2
   & \frac14 \eps q^{\frac{n-3}2}(q-1)(q+1)
   & - \frac14 \eps q^{\frac{n-3}2}(q-1)(q+1) \\[1em]
 1 & \eps q^{\frac{n-3}2}-1 
   & \frac14 \eps q^{\frac{n-3}2}(q+1)(q-3)
   & -\frac14 \eps q^{\frac{n-3}2}(q-1)^2
   & -\frac14 \eps q^{\frac{n-3}2}(q-1)(q+1)
   & \frac14 \eps q^{\frac{n-3}2}(q-1)(q+1) \\[1em]
 1 & \eps q^{\frac{n-1}2} - 1 & -\eps q^{\frac{n-1}2} & 0 & 0 & 0 \\[1em]
 1 & -\eps q^{\frac{n-1}2} - 1 & 0 & \eps q^{\frac{n-1}2} & 0 & 0
 \end{pmatrix}$
\end{sideways}
 \end{center}
\end{minipage}
\begin{minipage}{.4\textwidth}
 \begin{center}
\begin{sideways}
 $\displaystyle \mathbf Q = \begin{pmatrix}
 1 & 1 
  & q^2\frac{q^{n-1}-1}{q^2-1} 
  & q^2\frac{q^{n-1}-1}{q^2-1} 
  & \frac{q-3}{2(q-1)} \left( q^{\frac{n-1}2}+\eps\right) \left(q^{\frac{n+1}2}-\eps \right) 
  & \frac{q-1}{2(q+1)} \left( q^{\frac{n-1}2}-\eps\right) \left(q^{\frac{n+1}2}-\eps \right) \\[1em]
 1 & 1 
  & q^2 \frac{\eps q^{\frac{n-3}2}-1 }{ q^2-1 }
  & q^2 \frac{\eps q^{\frac{n-3}2}-1 }{ q^2-1 }
  & \eps \frac{q-3}{2(q-1)} \left( q^{\frac{n+1}2} - \eps \right)
  & -\eps \frac{q-1}{2(q+1)} \left( q^\frac{n+1}2 - \eps \right) \\[1em]
 1 & 1 
  & \eps \frac q{q-1} \left( q^{\frac{n-1}2} - \eps \right)
  & \eps \frac q{q-1} \left( q^{\frac{n-1}2} - \eps \right)
  & - \eps \frac{2}{q-1} \left( q^\frac{n+1}2 - \eps \right)
  & 0 \\[1em]
 1 & 1 
  & -\eps \frac q{q+1} \left( q^{\frac{n-1}2} + \eps \right)
  & -\eps \frac q{q+1} \left( q^{\frac{n-1}2} + \eps \right)
  & 0
  & \eps \frac2{q+1} \left( q^\frac{n+1}2 - \eps \right) \\[1em]
 1 & -1 
  & \eps \frac q{q-1} \left( q^{\frac{n-1}2} - \eps \right)
  & - \eps\frac q{q-1} \left( q^{\frac{n-1}2} - \eps \right) 
  & 0
  & 0 \\[1em]
 1 & -1 
  & - \eps \frac q{q+1} \left( q^{\frac{n-1}2} + \eps \right)
  & \eps \frac q{q+1} \left( q^{\frac{n-1}2} + \eps \right)
  & 0
  & 0
 \end{pmatrix}$
\end{sideways}
 \end{center}
\end{minipage}
 \caption{The matrices of eigenvalues and dual eigenvalues of the association scheme.}
 \label{Tab:Matrices}
\end{table}

\begin{thm}
 Consider the quadric $\mq^\eps(n,q)$ with $\eps = \pm 1$, and $q$ odd.
 The relations $R_0, \dots, R_5$ defined by \Cref{tab:rel} constitute an association scheme on the anisotropic points of $\mq^\eps(n,q)$.
 The matrices of eigenvalues and dual eigenvalues of this scheme are presented in \Cref{Tab:Matrices}.
\end{thm}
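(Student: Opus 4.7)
The plan is to execute the program sketched in the paragraphs immediately preceding the theorem statement: first confirm that the six relations form an association scheme, then compute $\mathbf P$ by simultaneously diagonalising the intersection matrices $B_0,\dots,B_5$, and finally derive $\mathbf Q$ from $\mathbf P$ via \Cref{ResIdentityEigenvalueMatrices}.

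The association scheme structure is essentially bookkeeping. I would invoke the intersection-number lemmas of the preceding subsection one by one. Each of them shows that, for a pair $(X,Y)\in R_k$, the value $p^k_{i,j}(X,Y)$ equals an explicit expression in $q,n,\eps$, independent of the particular pair chosen. Together with the symmetry of the relations and the fact that $R_0$ is the diagonal, this verifies \Cref{DfAssocMatrices}, so we have a 5-class association scheme.

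For the spectrum, the key observation is that $R_0\cup R_1\cup R_2\cup R_3$ is the equivalence relation ``same quadratic type'', so the scheme is imprimitive and the vectors $\one$ and $\one^{S_q}$ span two obvious one-dimensional common eigenspaces $V_0$ and $V_1$. This imprimitivity forces each $B_i$ with $i\le 3$ to be block-diagonal $\begin{pmatrix}C_i&O\\ O&D_i\end{pmatrix}$ and each $B_i$ with $i\ge 4$ to be block-antidiagonal $\begin{pmatrix}O&C_i\\ D_i&O\end{pmatrix}$. From the block-antidiagonal shape of $B_4$ its spectrum is symmetric about $0$; the four ``paired'' eigenvectors of the form $(v;\pm w)$ contribute to $V_0,V_1,V_2,V_3$, while two eigenvectors supported only on the top block (with the top part in $\ker D_4$) contribute to $V_4,V_5$ and carry eigenvalue $0$. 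The two unknown non-trivial paired eigenvalues $\pm\lambda$ of $B_4$ are then pinned down by the single trace identity $\lambda^2=\Tr(C_4D_4)-n_4^2$, and the eigenvalues of $B_5$ follow from $A_5=\begin{pmatrix}O&J\\ J&O\end{pmatrix}-A_4$. For each $B_i$ with $i\le 3$, the paired eigenspaces $V_2,V_3$ contribute the common eigenvalue $\Tr(D_i)-n_i$, and the remaining two eigenvalues $\{p_i(4),p_i(5)\}$ are extracted from $\Tr(C_i)$ and $\det(C_i)$ after dividing out the already-known eigenvalues. The ambiguity of assigning each of the two values in $\{p_i(4),p_i(5)\}$ to the correct one of $V_4,V_5$ is resolved using \Cref{Res:SumEigenvalues}.

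Once $\mathbf P$ is known, $\mathbf Q$ is routine: $m_0=m_1=1$ by construction, $m_2=m_3$ via the involution $v\mapsto v^{S_q}$ swapping $V_2$ with $V_3$, and $m_4,m_5$ are then forced by the linear system $(m_0,\dots,m_5)\mathbf P=(|\pp|,0,\dots,0)$; applying \Cref{ResIdentityEigenvalueMatrices} yields $\mathbf Q$. The main obstacle is purely organisational: six intersection matrices and six eigenvalues per row generate many opportunities to misalign eigenvalues and eigenspaces, and one must be disciplined about the pairing across all $B_i$. The trace, determinant and column-sum-zero conditions are sufficient in principle, and consistency can be cross-checked against the $n=3$ cases \cite{FisherPenttilaPraegerRoyle,adriaensen2022stability} and against the $q=3$ subscheme described in \Cref{Rem:OrthSchemeForq=3}.
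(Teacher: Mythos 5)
Your proposal is correct and follows essentially the same route as the paper: establish the scheme from the intersection-number lemmas, exploit the imprimitivity and the block structure of the intersection matrices (symmetric spectrum of $B_4$, eigenvalue $0$ on vectors with top part in $\ker D_4$, the trace identity giving $\pm\frac14 q^{\frac{n-3}{2}}(q^2-1)$, $\Tr(D_i)-n_i$ for the paired eigenvalue of $B_i$ with $i\le 3$, trace/determinant of $C_i$ for the rest, and \Cref{Res:SumEigenvalues} to match eigenvalues to $V_4,V_5$), then obtain $\mathbf Q$ from $m_0=m_1=1$, $m_2=m_3$, the linear system $(m_0,\dots,m_5)\mathbf P=(|\pp|,0,\dots,0)$, and \Cref{ResIdentityEigenvalueMatrices}. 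This matches the paper's argument in all essentials.
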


\begin{rmk}
 This association scheme is imprimitive.
 It has two isomorphic primitive subschemes, obtained by restricting to anisotropic points of one quadratic type.
 Such a subscheme has 3 classes if $q>3$ and 2 classes if $q=3$.
 The matrix of eigenvalues of this subscheme can be deduced from the matrix $\mathbf P$ in \Cref{Tab:Matrices} by removing the last two columns and removing rows 1 and 3.
\end{rmk}

Of special interest is the case where $n=3$.
By applying the polarity, we can interpret this association scheme as being defined on the non-degenerate plane sections of $\mq^\eps(3,q)$.
This gives us an association scheme on the circles of the miquelian Möbius or Minkowski plane of order $q$, depending on whether $\eps = -$ or $\eps = +$ respectively.
The investigation of known circle geometries was actually the initial motivation for this paper.
In particular, we can use the alternative representation of the miquelian Möbius and Minkowski planes to give alternative interpretations of this association scheme.

If $\eps=-1$, then we can interpret this scheme as being defined on the Baer sublines of $\pg(1,q^2)$.
Take two distinct Baer sublines.
If they lie in the same $\PSL(2,q^2)$ orbit, then they are in relation $R_1$, $R_2$, or $R_3$ depending on whether they intersect in $1$, $0$, or $2$ points respectively.
It they lie in different $\PSL(2,q^2)$ orbits, they are in relation $R_4$ or $R_5$ depending on whether they intersect in $0$ or $2$ points respectively.

If $\eps=+1$, then we can interpret this scheme as being defined on the elements of $\PGL(2,q)$.
The relation containing two elements $f,g$ depends on whether they lie in the same coset of $\PSL(2,q)$ and on how many fixed points $f \circ g^{-1}$ has.
This scheme was already described, and its matrices of eigenvalues and dual eigenvalues determined, by the first author \cite[\S 3]{adriaensen2022stability}.

\subsection{Combinatorial description of the eigenspaces.}

We already know that $V_0 = \vspan \one$ and $V_1 = \vspan{\one^{S_q}}$.
We can say more about the eigenspaces.
For a subspace $\pi$ of $\pg(n,q)$, let $\chi_\pi$ denote the characteristic vector of the anisotropic points contained in $\pi$.
We will look for cliques and cocliques reaching equality in Delsarte's or (weighted) Hoffman's ratio bound (Results \ref{Res:Delsarte} and \ref{Res:Hoffman}) and use \Cref{Lm:SpanningOrbit} to describe certain eigenspaces of the association scheme.
In this regard, it is important to observe that the orthogonal group $\PGO^\eps(n+1,q)$ associated to $\mq^\eps(n,q)$ acts transitively on all subspaces of $\pg(n,q)$ that intersect $\mq^\eps(n,q)$ in isomorphic quadrics, see \cite[Theorem 1.49]{hirschfeldthas}.
In particular, $\PGO^\eps(n,q)$ acts transitively on the passant lines, the tangent lines, and the secant lines of $\mq^\eps(n,q)$.
It follows that the action of $\PGO^\eps(n+1,q)$ on the set of anisotropic points $\pp$ yields a group of automorphisms that satisfies the condition of \Cref{Lm:SpanningOrbit}.

\begin{prop}
 \label{Prop:R1Delsarte}
 Let $\ms_1$ denote the set of $\frac{n-1}2$-spaces that intersect $\mq^\eps(n,q)$ exactly in a totally isotropic $\frac{n-3}2$-space.
 For every $\pi \in \ms_1$, $\pi \cap \pp$ is a Delsarte clique for the relation $R_1$.
 Moreover, $\vspann{\chi_\pi}{\pi \in \ms_1} = V_\frac{9+\eps}2^\perp$.
\end{prop}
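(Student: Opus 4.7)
My plan is as follows: first, show that $\pi \cap \pp$ is a Delsarte clique for $R_1$ of size $q^{\frac{n-1}{2}}$, so that Delsarte's bound forces $\chi_\pi \perp V_\frac{9+\eps}2$; then use \Cref{Lm:SpanningOrbit} together with a direct projection computation to pin down $\dds(\pi) = \{1,\ldots,5\}\setminus\bigl\{\tfrac{9+\eps}{2}\bigr\}$.

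First, I would set $\sigma = \pi \cap \mq$ and observe that, being a totally isotropic $\frac{n-3}{2}$-space sitting inside the $\frac{n-1}{2}$-space $\pi$, it is a hyperplane of $\pi$. Any line joining two distinct anisotropic points $X, Y \in \pi$ lies in $\pi$ but not in $\sigma$, so it meets $\sigma$ in exactly one (isotropic) point; hence $\vspan{X, Y}$ is tangent, and the remark just after \Cref{tab:rel} forces $(X, Y) \in R_1$. A point-count then yields $|\pi \cap \pp| = \theta_{\frac{n-1}{2}}(q) - \theta_{\frac{n-3}{2}}(q) = q^{\frac{n-1}{2}}$. From the second column of $\mathbf P$ in \Cref{Tab:Matrices}, the smallest eigenvalue of $A_1$ is $\tau = -q^{\frac{n-1}{2}}-1$, attained precisely on $V_\frac{9+\eps}2$. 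Delsarte's ratio bound reads $|S| \le n_1/(-\tau) + 1 = q^{\frac{n-1}{2}}$, matched with equality, so \Cref{Res:Delsarte} delivers $\chi_\pi \perp V_\frac{9+\eps}2$ and hence $\dds(\pi) \subseteq \{1,\ldots,5\}\setminus\bigl\{\tfrac{9+\eps}{2}\bigr\}$.

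For the reverse inclusion I would apply \Cref{Lm:SpanningOrbit} with $G = \PGO^\eps(n+1, q)$. This group acts transitively on each relation $R_i$ and on $\ms_1$ (both follow from its transitivity on subspaces with isomorphic intersections), so the lemma's hypothesis is satisfied and
\[
 \vspann{\chi_\pi}{\pi \in \ms_1} = \vspann{V_j}{j \in \dds(\pi) \cup \{0\}}.
\]
To identify $\dds(\pi)$ I would expand $E_j = \frac{1}{|\pp|}\sum_i q_i(j) A_i$. Since every pair of distinct points of $\pi \cap \pp$ lies in $R_1$, only the $i \in \{0, 1\}$ summands contribute, giving
\[
 \|E_j \chi_\pi\|^2 = \chi_\pi^\top E_j \chi_\pi = \frac{q^{\frac{n-1}{2}}}{|\pp|}\bigl[m_j + q_1(j)\bigl(q^{\frac{n-1}{2}}-1\bigr)\bigr],
\]
with $m_j$ and $q_1(j)$ read off rows $0$ and $1$ of $\mathbf Q$.

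The main obstacle is checking that this bracket vanishes precisely for $j = \tfrac{9+\eps}{2}$. For $j \in \{2, 3\}$ a routine factorisation of the $\mathbf Q$-entries gives a positive multiple of $(q^{\frac{n-1}{2}}-1)$. For $j \in \{4, 5\}$ the factor $(q^{\frac{n+1}{2}}-\eps)$ common to $m_j$ and $q_1(j)$ isolates a residual $(1+\eps)q^{\frac{n-1}{2}}$ at $j=4$ and $(1-\eps)q^{\frac{n-1}{2}}$ at $j=5$; exactly one of these vanishes for each value of $\eps$, and it is precisely the index $\tfrac{9+\eps}{2}$. This pins down $\dds(\pi) = \{1,\ldots,5\}\setminus\bigl\{\tfrac{9+\eps}{2}\bigr\}$, whence $\vspann{\chi_\pi}{\pi \in \ms_1} = V_\frac{9+\eps}2^\perp$.
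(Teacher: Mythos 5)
Your first half coincides with the paper's own argument: the clique of size $\theta_{\frac{n-1}2}(q)-\theta_{\frac{n-3}2}(q)=q^{\frac{n-1}2}$, equality in Delsarte's bound for $A_1$ (whose smallest eigenvalue $-q^{\frac{n-1}2}-1$ is attained exactly on $V_{\frac{9+\eps}2}$), hence $\chi_\pi\perp V_{\frac{9+\eps}2}$. For the reverse inclusion you take a genuinely different route. The paper never looks at individual entries of $\mathbf Q$: it forms the matrix $M$ with columns $\chi_\pi$, $\pi\in\ms_1$, notes that $MM^\top$ lies in $\vspan{I,A_1}$ (the number of members of $\ms_1$ through a pair of anisotropic points is relation-invariant and vanishes outside $R_0\cup R_1$), so the column space misses at most one of the four eigenspaces of $A_1$; combined with the Delsarte orthogonality this pins it to $V_{\frac{9+\eps}2}^\perp$. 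Your alternative, computing $\chi_\pi^\top E_j\chi_\pi=\frac{q^{(n-1)/2}}{|\pp|}\bigl(\mathbf Q(0,j)+(q^{(n-1)/2}-1)\mathbf Q(1,j)\bigr)$ and factoring, is more computational but works: I checked that $j=2,3$ give a positive multiple of $q^{\frac{n-3}2}(q+\eps)(q^{\frac{n-1}2}-1)$ and that $j=4,5$ leave residuals proportional to $(1+\eps)$ and $(1-\eps)$ respectively, so the bracket vanishes exactly at $j=\frac{9+\eps}2$ (with the harmless exception $q=3$, $\eps=+1$, where the $j=4$ bracket also vanishes through its factor $q-3$, but then $V_4$ is trivial anyway).

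The one step that does not hold as written is your verification of the hypothesis of \Cref{Lm:SpanningOrbit}: $\PGO^\eps(n+1,q)$ is \emph{not} transitive on each relation $R_i$. Any projectivity stabilising $\mq^\eps(n,q)$ scales $\kappa$ by a constant and therefore preserves $B(x,y)^2/(\kappa(x)\kappa(y))$; on $R_2,\dots,R_5$ this invariant takes several values once $q$ is large, so those relations split into several orbitals -- this is precisely why the Schurian scheme of \Cref{SubSecSchurianSchemes} has a number of classes growing with $q$, and our scheme is only a fusion of it. Transitivity on subspaces with isomorphic quadric sections gives transitivity on tangent, secant and passant lines, not on ordered pairs of anisotropic points. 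Fortunately the lemma only requires $|\sett{g\in G}{X,Y\in S^g}|$ to be constant on each relation, and for $S=\pi\cap\pp$ this count is zero on $R_2,\dots,R_5$ (any two points of a member of $\ms_1$ are $R_1$-related), constant on $R_0$ by transitivity on $\pp$, and constant on $R_1$ because the group \emph{is} transitive on ordered pairs of anisotropic points spanning a tangent line: all anisotropic points of a tangent line have the same quadratic type, a type-swapping similarity reduces to equal types, and after normalising representatives so that $\kappa(x)$, $\kappa(y)$, $B(x,y)$ agree, Witt's theorem provides the required isometry. Replace your blanket transitivity claim by this weaker, correct statement and your argument goes through; with that repair the proof is complete.
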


\begin{proof}
 If we apply Delsarte's ratio bound to relation $R_1$, we obtain the bound 
 \[
  \frac{q^{n-1}-1}{q^{\frac{n-1}2}+1} + 1 = q^\frac{n-1}2.
 \]
 The eigenspace of eigenvalue $-(q^{\frac{n-1}2}+1)$ is $V_{\frac{9+\eps}2}$.
 If $\pi \in \ms_1$, let $\rho$ denote $\pi \cap \mq^\eps(n,q)$.
 Then every line $\ell$ contained in $\pi$ is either contained in $\rho$ and therefore totally isotropic, or intersects $\rho$ in a unique point and therefore tangent.
 It follows that any two anisotropic points of $\pi$ span a tangent line, and that $\pi$ contains $\theta_{\frac{n-1}2}(q) - \theta_{\frac{n-3}2}(q) = q^\frac{n-1}2$ anisotropic points.

 Now consider the matrix $M \in \RR^{\pp \times \ms_1}$ with the vectors $\chi_\pi$, $\pi \in \ms_1$, as columns.
 By considering the automorphisms, we see that the number of spaces $\pi \in \ms_1$ through two points $X,Y \in \pp$ only depends on which relation $R_i$ contains $(X,Y)$.
 Moreover, this number equals 0 if $i > 1$, hence $M M^\top$ is a linear combination of $I$ and $A_1$.
 Therefore, the column space of $M M^\top$ is spanned by all or all but one of the eigenspaces of $A_1$.
 Note that the column spaces of $M$ and $M M^\top$ coincide and equal $\vspann{\chi_\pi}{\pi \in \ms_1}$.
 By equality in Delsarte's ratio bound, the latter space is orthogonal to $V_{\frac{9+\eps}2}$.
 Hence, the only possibility is that $\vspann{\chi_\pi}{\pi \in \ms_1} = V_{\frac{9+\eps}2}^\perp$.
\end{proof}

\begin{prop}
 \label{Prop:RDisjointHoffman}
 Let $\ms_2$ denote the set of totally isotropic $\frac{n-3}2$-spaces.
 For every $\pi \in \ms_2$, $\pi^\perp \cap \pp$ is a weighted Hoffman coclique for the relation $R_\frac{5+\eps}2 \cup R_\frac{9+\eps}2$ of size $q^\frac{n-1}2(q-\eps)$.
 Moreover, $\vspann{\chi_{\pi^\perp}}{\pi \in \ms_2} = V_0 \oplus V_2$ and $\vspann{\chi_{\pi^\perp}^{S_q}}{\pi \in \ms_2} = V_1 \oplus V_3$.
\end{prop}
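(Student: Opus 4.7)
The overall plan is to show that $S := \pi^\perp \cap \pp$ attains equality in the weighted Hoffman bound (\Cref{Res:Hoffman}) for a carefully chosen compatible matrix, which will force $\chi_{\pi^\perp} \in V_0 \oplus V_2$, and then to apply \Cref{Lm:SpanningOrbit} with $G = \PGO^\eps(n+1, q)$ acting transitively on $\ms_2$ to deduce the global spanning statement; the second equality will drop out by composing with the quadratic-type sign involution. Both the size of $S$ and the coclique property will come from the cone description of $\pi^\perp \cap \mq$: since $\pi \in \ms_2$ is totally isotropic of projective dimension $\tfrac{n-3}{2}$, one has $\pi^\perp \cap \mq = \pi$ when $\eps = -1$, while for $\eps = +1$ this intersection is a union of two $\tfrac{n-1}{2}$-dimensional generators $G_1, G_2$ meeting exactly in $\pi$. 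In both cases a direct count will give $|S| = q^{(n-1)/2}(q - \eps)$. For the coclique property I would argue geometrically: if $\eps = -1$, any line of $\pi^\perp$ joining two anisotropic points meets $\pi$ in at most one point, so it is passant or tangent to $\mq$, hence never in $R_2 \cup R_4$; if $\eps = +1$, the subspaces $G_1, G_2$ are hyperplanes of $\pi^\perp$, so every line of $\pi^\perp$ meets each $G_i$, and a line through two anisotropic points is thus secant or tangent, hence never in $R_3 \cup R_5$.

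The Hoffman computation will use the compatible matrix
\[
A = \begin{cases} (q+1)\, A_3 + (q-1)\, A_5 & \text{if } \eps = +1, \\ (q-1)\, A_2 + (q-3)\, A_4 & \text{if } \eps = -1. \end{cases}
\]
Reading off the entries of $\mathbf P$ in \Cref{Tab:Matrices}, the weights are chosen so that the contributions cancel on $V_1$ and $V_3$; the eigenvalues of $A$ on $V_4, V_5$ are then non-negative, the row-sum $n > 0$ sits on $V_0$, and a unique negative eigenvalue $\tau$ appears on $V_2$. A short algebraic simplification will then give $|\pp|/(n/(-\tau) + 1) = q^{(n-1)/2}(q - \eps) = |S|$, attaining the bound; \Cref{Res:Hoffman} will then force $\chi_{\pi^\perp}$ to lie in the span of $\mathbf 1$ and the $\tau$-eigenspace, i.e.\ in $V_0 \oplus V_2$, with non-zero $V_2$-component because $S$ is a proper non-empty subset of $\pp$. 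The hard part will be this linear-algebraic verification: the weights must be fine-tuned so that the minimum of $A$ is attained \emph{uniquely} on $V_2$ and the Hoffman quotient matches $|S|$ exactly.

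For the spanning statement, observe that $\PGO^\eps(n+1, q)$ acts transitively on $\ms_2$, and the number of $g \in G$ with $X, Y \in (\pi^g)^\perp$ equals $|\mathrm{Stab}_G(\pi)|$ times the number of totally isotropic $\tfrac{n-3}{2}$-subspaces contained in $\vspan{X, Y}^\perp$. Since the isomorphism type of $\vspan{X, Y}^\perp \cap \mq$ is determined by the type of the line $\vspan{X, Y}$, and thus by the relation containing $(X, Y)$, the hypothesis of \Cref{Lm:SpanningOrbit} is met and yields $\vspann{\chi_{\pi^\perp}}{\pi \in \ms_2} = V_0 \oplus V_2$. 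For the $S_q$-version, the diagonal sign matrix $D$ implementing $v \mapsto v^{S_q}$ commutes with $A_0, A_1, A_2, A_3$ and anticommutes with $A_4, A_5$; inspecting columns $4$ and $5$ of $\mathbf P$ shows that $D$ swaps $V_0 \leftrightarrow V_1$ and $V_2 \leftrightarrow V_3$, whence $\vspann{\chi_{\pi^\perp}^{S_q}}{\pi \in \ms_2} = D(V_0 \oplus V_2) = V_1 \oplus V_3$.
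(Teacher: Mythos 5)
Your proposal is correct and takes essentially the same route as the paper: the cone description $\pi^\perp \cap \mq \cong \pi\,\mq^\eps(1,q)$ gives the coclique property and the size $q^{\frac{n-1}2}(q-\eps)$, and your weighting matrix is exactly the paper's choice $(q+\eps)A_{\frac{5+\eps}2}+(q-2+\eps)A_{\frac{9+\eps}2}$, whose eigenvalues behave as you claim (zero on $V_1,V_3$, nonnegative on $V_4,V_5$, unique negative value on $V_2$), so equality in \Cref{Res:Hoffman} holds. The spanning statements are then obtained just as in the paper, via \Cref{Lm:SpanningOrbit} for the group $\PGO^\eps(n+1,q)$ and the sign involution $v \mapsto v^{S_q}$ swapping $V_0 \leftrightarrow V_1$ and $V_2 \leftrightarrow V_3$.
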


\begin{proof}
 If $\pi \in \ms_2$, then $\pi^\perp$ intersects $\mq^\eps(n,q)$ in a cone $\pi \mq^\eps(1,q)$.
 This implies that $\pi^\perp$ contains no lines intersecting $\mq^\eps(n,q)$ in exactly $1-\eps$ points, and hence that the anisotropic points of $\pi^\perp$ are a coclique for relations $R_{\frac{5+\eps}2}$ and $R_{\frac{9+\eps}{2}}$.
 The number of anisotropic points in $\pi^\perp$ equals
 \[
  \theta_{\frac{n+1}2}(q) - |\pi \mq^\eps(1,q)|
  = \theta_{\frac{n+1}2}(q) - \left( \theta_{\frac{n-3}2}(q) + (1+\eps) q^\frac{n-1}2 \right)
  = (q-\eps) q^\frac{n-1}2.
 \]
 We will check that this matches a weighted Hoffman's ratio bound.
 Note that any linear combination of $A_{\frac{5+\eps}2}$ and $A_{\frac{9+\eps}2}$ satisfies the conditions of \Cref{Res:Hoffman} with respect to the relevant graph.
 The eigenvalues of this linear combination can be easily obtained from the matrix $\mathbf P$ from \Cref{Tab:Matrices}.

 When making a linear combination of $A_{\frac{5+\eps}2}$ and $A_{\frac{9+\eps}2}$, we want it to have its smallest eigenvalue on $V_2$.
 In particular, we need to avoid that the smallest eigenvalue corresponds to $V_1$.
 Therefore, we take the linear combination $A = (q+\eps) A_{\frac{5+\eps}2} + (q-2+\eps) A_{\frac{9+\eps}2}$.
 Then $\one$ is an eigenvector of $A$ with eigenvalue
 \[
  (q+\eps) \mathbf P\left(0,\frac{5+\eps}2\right) + (q-2+\eps) \mathbf P\left(0,\frac{9+\eps}2\right)
  = \frac 12 q^\frac{n-1}2 \left( q^\frac{n-1}2 - 1 \right) (q+\eps) (q-2+\eps)
 \]
 and $A$ has its smallest eigenvalue
 \[
  (q+\eps) \mathbf P\left(2,\frac{5+\eps}2\right) + (q-2+\eps) \mathbf P\left(2,\frac{9+\eps}2\right)
  = - \frac 12 q^\frac{n-3}2 (q-\eps) (q+\eps) (q-2+\eps)
 \]
 on eigenspace $V_2$.
 Plugging this into \Cref{Res:Hoffman} yields a bound $(q-\eps) q^\frac{n-1}2$ on cocliques for $R_{\frac{5+\eps}2} \cup R_{\frac{9+\eps}2}$, which is achieved by the set of anisotropic points in a space $\pi^\perp$ with $\pi \in \ms$.
 Then $V_0 \oplus V_2 = \vspann{\chi_{\pi^\perp}}{\pi \in \ms_2}$ by \Cref{Lm:SpanningOrbit}.
 Since the bipartite graph $(\pp,R_4)$ has eigenvalues with opposite sign on $V_0$ and $V_1$, and on $V_2$ and $V_3$, we know that $V_1 = \sett{v^{S_q}}{v \in V_0}$ and $V_3 = \sett{v^{S_q}}{v \in V_2}$.
 Thus, $V_1 \oplus V_3 = \vspann{\chi_{\pi^\perp}^{S_q}}{\pi \in \ms_2}$.
\end{proof}

\begin{rmk}
 Consider again the case $n=3$.
 As mentioned before, we can interpret this association scheme as being defined on the circles of the miquelian Möbius or Minkwoski plane of order $q$.
 In this interpretation, two circles are in relation $R_\frac{5+\eps}2 \cup R_\frac{9+\eps}2$ if and only if they are disjoint.
 Cocliques for this relation are called \emph{intersecting families}.
 Large intersecting families in the known circle geometries have been classified by the first author \cite[Theorem 1.3]{adriaensen2022stability}.
\end{rmk}

Lastly, we study Hoffman cocliques of $R_1$ for $\mq^+(3,q)$.
These are equivalent to other interesting geometric objects.
The corresponding graph has two isomorphic connected components, corresponding to the two quadratic types.
So, we will focus on one of these components, i.e.\ take the points of one quadratic type.
In that case, Hoffman cocliques have size 
\[
 \frac{\frac 12 q(q^2-1)}{\frac{q^2-1}{q+1}+1} = \frac{q^2-1}{2}.
\]

A set of points $\mo$ in a polar space is called an \emph{ovoid} if it intersects every generator in exactly one point.
If $\mo$ intersects every generator in at most one point, we call it a \emph{partial ovoid}.
Partial ovoids of $\mq(4,q)$ have been investigated in several papers.
We review the most important properties and refer the interested reader to \cite{CoolsaetDeBeuleSiciliano}.
Note that the generators of $\mq(4,q)$ are lines.
Let $\kappa$ and $\perp$ denote the quadratic form and polarity associated to $\mq(4,q)$, respectively.

Ovoids of $\mq(4,q)$ contain $q^2+1$ points.
Partial ovoids of $\mq(4,q)$ which are not contained in any ovoid, contain at most $q^2-1$ points.
Such a partial ovoid whose size meets the upper bound will be called a \emph{maximum partial ovoid}.
If $\mo$ is a maximum partial ovoid of $\mq(4,q)$, then there exists a hyperplane $\pi$ intersecting $\mq(4,q)$ in a $\mq^+(3,q)$ such that the generators of $\mq(4,q)$ that miss $\mo$ are exactly the generators contained in $\pi$.
Let $P$ denote the point $\pi^\perp$.
We call the maximum partial ovoid $\mo$ \emph{antipodal} if every secant line through $P$ intersects $\mo$ in either 0 or 2 points.

A line $\ell$ through $P$ is tangent to $\mq(4,q)$ if and only if it intersects $\pi$ in a point of the quadric $\mq^+(3,q)$.
If $\ell$ is not tangent, then $\ell$ is passant or secant depending on the quadratic type of $\ell \cap \pi$.
Since we can scale $\kappa$ if necessary, we may suppose without loss of generality that the secant lines through $P$ intersect $\pi$ in a point of square type.

Now suppose that $\mo$ is an antipodal maximum partial ovoid.
Let $C$ denote its projection from $P$ onto $\pi$, i.e.\ $C$ equals the set $\sett{ \vspan{P,R} \cap \pi }{R \in \mo}$.
Since $\mo$ is antipodal, $|C| = \frac{|\mo|}2 = \frac{q^2-1}{2}$.
Note also that $C$ consists of anisotropic points of $\pi$ of square type.
Take a tangent line $\ell$ in $\pi$ whose anisotropic points are of square type.
Suppose that $\ell$ intersects $\mq(4,q)$ in the point $Q$.
Then $\vspan{P,Q}$ is a tangent line, and all other lines through $P$ in $\vspan{P,\ell}$ are secant lines.
It follows that $\vspan{P,\ell}$ meets $\mq(4,q)$ in the union of two lines through $Q$.
Both of these lines must contain a point of $\mo$, say $R_1$ and $R_2$.
Since $\mo$ is antipodal, $P$, $R_1$, and $R_2$ must be collinear, which means that $P$ projects $R_1$ and $R_2$ onto the same point of $C$.
Therefore, $\ell$ contains a unique point of $C$.
This implies that $C$ is indeed a Hoffman coclique for the relation $R_1$ on the anisotropic points of $\mq^+(3,q)$ of square type.

This process is reversible.
Suppose that $C$ is a Hoffman coclique for the relation $R_1$ on the anisotropic points of $\mq^+(3,q)$ of square type.
For every point $R \in C$, the line $\vspan{P,R}$ is secant to $\mq(4,q)$.
Let $\mo$ denote the set of points in which these lines $\vspan{P,R}$ intersect $\mq(4,q)$.
Then $\mo$ is an antipodal maximum partial ovoid of $\mq(4,q)$.
Indeed, it readily follows that $\mo$ contains $q^2-1$ points and is antipodal.
Moreover, a generator $\ell$ of $\mq(4,q)$ cannot contain 2 points of $\mo$.
This is clear if $\ell$ is contained in $\pi$, so suppose that $\ell$ isn't.
Then the plane $\vspan{P,\ell}$ intersects $\pi$ in a tangent line.
This follows from the fact that $\ell$ contains a unique point $Q$ of $\pi = P^\perp$, hence a unique tangent line $\vspan{P,Q}$ through $P$, which implies that $Q$ is the only isotropic point of $\vspan{P,\ell} \cap \pi$.
Then $\vspan{P,\ell} \cap \pi$ cannot contain multiple points of $C$, which implies that $\ell$ cannot contain multiple points of $\mo$.

We conclude that Hoffman cocliques of $R_1$ are equivalent to antipodal maximum partial ovoids of $\mq(4,q)$.
As observed in \cite{CoolsaetDeBeuleSiciliano}, a maximum partial ovoid of $\mq(4,q)$ is equivalent to a sharply transitive subset of $\SL(2,q)$.
Moreover, this maximum partial ovoid is antipodal if and only if the sharply transitive subset is closed under multiplication with $-1$.
Hence, we have established the following equivalence.

\begin{prop}
 Let $q$ be an odd prime power.
 The following objects are equivalent.
 \begin{enumerate}
  \item A Hoffman coclique of the $R_1$ relation on the points of $\mq^+(3,q)$ of one quadratic type.
  \item An antipodal maximum partial ovoid of $\mq(4,q)$.
  \item A sharply transitive subset of $\SL(2,q)$ that is closed under multiplication with $-1$.
 \end{enumerate}
\end{prop}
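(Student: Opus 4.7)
The plan is to chain two equivalences: (1) $\Leftrightarrow$ (2) and (2) $\Leftrightarrow$ (3). Most of the bridging between (1) and (2) has already been laid out in the discussion preceding the proposition statement, so the proof is essentially bookkeeping of what was established there.

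For (1) $\Rightarrow$ (2), I would embed $\mq^+(3,q)$ as the hyperplane section $\pi \cap \mq(4,q)$ and let $P = \pi^\perp$, with $\kappa$ scaled so that the secant lines through $P$ meet $\pi$ in square-type points. Given a Hoffman coclique $C$, define
\[
 \mo = \bigcup_{R \in C} \bigl(\vspan{P,R} \cap \mq(4,q)\bigr).
\]
The three things to verify are that $|\mo| = q^2-1$, that $\mo$ is antipodal by construction, and that no generator $\ell$ of $\mq(4,q)$ carries two points of $\mo$. The last point is the only non-trivial check: a generator $\ell \not\subset \pi$ meets $\pi$ in a unique point $Q \in \mq^+(3,q)$, so $\vspan{P,\ell} \cap \pi$ is a tangent line of $\pi$ at $Q$ and therefore contains at most one point of $C$, forcing $\ell$ to contain at most one point of $\mo$.

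For (2) $\Rightarrow$ (1), I would reverse the projection: starting from an antipodal maximum partial ovoid $\mo$, take $C$ to be its image in $\pi$ under projection from $P$. Antipodality gives $|C| = \frac{|\mo|}{2} = \frac{q^2-1}{2}$, which matches the Hoffman bound. To show $C$ is a coclique for $R_1$, pick a square-type tangent line $\ell \subset \pi$ at a point $Q$, form the plane $\vspan{P,\ell}$ which intersects $\mq(4,q)$ in two generators through $Q$, each of which must contain exactly one point of $\mo$ because $\mo$ misses only the generators lying in $\pi$. Antipodality then forces these two points of $\mo$ to project to the same point of $C$, so $\ell \cap C$ has cardinality at most one, which is precisely the $R_1$-coclique condition.

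The equivalence (2) $\Leftrightarrow$ (3) is the classical result from \cite{CoolsaetDeBeuleSiciliano}, which identifies maximum partial ovoids of $\mq(4,q)$ with sharply transitive subsets of $\SL(2,q)$ and, under this identification, translates the antipodal condition into closure under multiplication by $-1$. The most delicate part of the whole argument would be verifying this last translation, but as it is already spelled out in the cited reference, it reduces to quoting their construction. Overall, the main conceptual obstacle is already resolved above the statement; the proof itself can be written as a brief remark referring back to the preceding paragraphs and to \cite{CoolsaetDeBeuleSiciliano}.
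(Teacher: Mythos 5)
Your proposal is correct and follows essentially the same route as the paper, whose proof of this proposition is precisely the preceding discussion: project from $P=\pi^\perp$ onto the hyperplane $\pi$ (after the same scaling convention), verify in both directions that Hoffman cocliques of $R_1$ and antipodal maximum partial ovoids correspond, and quote \cite{CoolsaetDeBeuleSiciliano} for the passage to sharply transitive subsets of $\SL(2,q)$ closed under multiplication by $-1$. The only differences are matters of detail the paper spells out slightly more fully (e.g.\ the one-line argument that $\vspan{P,\ell}\cap\pi$ is tangent, and the trivial case of generators contained in $\pi$), none of which affects the argument.
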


A maximum partial ovoid of $\mq(4,q)$ can only exist if $q$ is a prime \cite{DeBeuleGacs}.
Examples are known for $q = 3,5,7,11$, all of which are antipodal and correspond to sharply transitive subgroups of $\SL(2,q)$.
Coolsaet, De Beule, and Siciliano \cite{CoolsaetDeBeuleSiciliano} conjecture that a sharply transitive subset of $\SL(2,q)$ can only arise from a sharply transitive subgroup, which would imply that there are no further examples of maximum partial ovoids of $\mq(4,q)$, antipodal or otherwise.

\section{Orthogonality graphs}
 \label{Sec:Ortho}

In this section we determine the spectrum of the adjacency matrices of some orthogonality graphs.
These graphs are constructed as follows.
Take a polarity $\perp$ of $\pg(n,q)$.
Take a subset $\pp$ of the points of $\pg(n,q)$.
Define a graph $G$ whose vertices are $\pp$ and where $X$ is adjacent to $Y$ if and only if $X \perp Y$.
We can ensure that the graph is loopless, by choosing only anisotropic points as vertices.
Typically, $\pp$ consists either of all anisotropic points or in case $\perp$ defines a quadric and $q$ is odd, of the anisotropic points of one quadratic type.

Orthogonality graphs gained interest in the study of Ramsey-type problems, more specifically they are ``dense'', ``clique-free'', and ``pseudorandom''.
Pseudorandom means that they behave in some way like random graphs.
For a regular graph, this is certainly the case if the second largest eigenvalue in absolute value of the adjacency matrix of $G$ is close to the square root of the degree.
Clique-free means that the clique number of the graph is small.
Dense means that the graph contains a lot of edges, or in other words that it has high degree.

The graphs were introduced in this context by Alon and Krivelevich \cite{AlonKrivelevich}.
They considered the case where $q$ is even, $\perp$ is a \emph{pseudo-polarity} (which means that the absolute points of $\perp$ form a hyperplane), and $\pp$ consists of all points of $\pg(n,q)$ (which forces the graph to contain loops).
Bishnoi, Ihringer, and Pepe \cite{BishnoiIhringerPepe} studied the case where $q$ is odd, $\perp$ gives rise to a non-degenerate quadric, and $\pp$ consists of the anisotropic points of one quadratic type.
From the viewpoint of dense clique-free pseudorandom graphs, this is a meaningful improvement to the original construction by Alon and Krivelevich.
The interested reader is invited to consult \cite{BishnoiIhringerPepe} for more details.

Determining the eigenvalues of the orthogonality graphs is easy when $\pp$ consists of all points of $\pg(n,q)$, as noted in \cite{AlonKrivelevich}.
When we restrict $\pp$ to a smaller set of points, we obtain a subgraph whose eigenvalues interlace the eigenvalues of the bigger graph.
This generally yields a good upper bound on the second largest eigenvalue in absolute value of the subgraph.
However, using the eigenvalues of the association schemes we encountered in this paper, we can compute the eigenvalues of some orthogonality graphs on anisotropic points exactly.

\begin{thm} \label{Thm:EigOrtho}
 The eigenvalues of orthogonality graphs on anisotropic points are listed below.
 \begin{enumerate}
  \item \label{Thm:EigOrtho:Herm} Hermitian variety $\mh(n,q^2)$, $\eps = (-1)^n$
  
  \begin{tabular}{c||c|c|c|c}
     Eig.\ & $q^{n-1} \frac{q^n-\eps}{q+1} $ & $\eps q^{n-1}$ & $-\eps q^{n-1}$ & $-\eps q^{n-2}$ \\ \hline
     Mult.\ & $1$ & $\frac q {2(q+1)^2} \left( q^{n+1}+\eps \right) \left(q^n - \eps \right)$ & $\frac {q-2} {2(q^2-1)} \left( q^{n+1}+\eps \right) \left(q^n - \eps \right)$ & $q^3 \frac{q^n-\eps}{q^2-1} \frac{q^{n-1}+\eps}{q+1}$
  \end{tabular}

  \item \label{Thm:EigOrtho:HypEll} Elliptic or hyperbolic quadric $\mq^\eps(n,q)$, $q$ odd

  \begin{tabular}{c||c|c|c|c}
     Eig.\ & $q^{n-1}$ & $\eps q^\frac{n-1}2$ & $- \eps q^\frac{n-1}2$ & $\eps q^\frac{n-3}2$ \\ \hline
     Mult.\ & $1$ & $\frac{q^{n+1} - 2q^n + 1}{2(q-1)} - \eps q^\frac{n-1}2$
      & $\frac{q^{n+1}-1}{2(q+1)}$ 
      & $q^2 \frac{q^{n-1}-1}{q^2-1}$
  \end{tabular}

  \item \label{Thm:EigOrtho:Par} Parabolic quadric $\mq(n,q)$, $q$ odd

  \begin{tabular}{c||c|c|c|c|c}
     Eig.  & $q^{n-1}$ & $\pm q^\frac{n-1}2$ & $q^{\frac n2 -1}$ & $-q^{\frac n2 - 1}$ & $0$\\ \hline
     Mult.  & $1$ 
     & $\frac 12 \left( q^n - q \frac{q^{n-1}-1}{q-1} \right) - 1$
     & $\frac 12 \left( q \frac{q^{n-1}-1}{q-1} - q^\frac n2 \right)$
     & $\frac 12 \left( q \frac{q^{n-1}-1}{q-1} + q^\frac n2 \right)$
     & $1$
  \end{tabular}
 \end{enumerate}
\end{thm}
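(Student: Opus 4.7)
My plan is to treat each of the three cases by realising the orthogonality adjacency matrix $A_\perp$, or its square, inside the Bose--Mesner algebra of a suitable association scheme on the anisotropic points, and then reading the listed eigenvalues and multiplicities off from that scheme's matrix of eigenvalues.

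The Hermitian case is essentially immediate from \Cref{Prop:FissionHerm}. Two distinct anisotropic points $X,Y$ with $X\perp Y$ span a line meeting $\mh(n,q^2)$ in exactly $q+1$ points, and among such pairs orthogonality cuts out precisely the relation $R_{2\perp}$. Hence $A_\perp$ is a single adjacency matrix of the 4-class scheme constructed in \Cref{SecHerm}, and its spectrum is one column of the corresponding $\mathbf{P}$-matrix together with the eigenspace dimensions supplied by $\mathbf{Q}$; both are produced alongside the proof of \Cref{Prop:FissionHerm}.

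For the elliptic or hyperbolic quadric with $q$ odd, $A_\perp$ is \emph{not} a single class of the 5-class scheme of \Cref{Sec:NewScheme}, but since the orthogonal group preserves both the scheme and orthogonality, $A_\perp$ commutes with every $A_i$ and preserves every eigenspace $V_j$. Moreover $A_\perp^2(X,Y)=|\vspan{X,Y}^\perp\cap\pp|$ depends only on how $\vspan{X,Y}$ meets $\mq$, so the intersection formulas of \Cref{Sec:Prel} applied to tangent, secant and passant lines (using that $\vspan{X,Y}^\perp\cap\mq$ is a cone with a non-degenerate quadric base) give
\[
A_\perp^2 \;=\; q^{n-1} I + q^{n-2} A_1 + (q^{n-2}-\eps q^{(n-3)/2})(A_2+A_4) + (q^{n-2}+\eps q^{(n-3)/2})(A_3+A_5).
\]
Feeding in the $\mathbf{P}$-matrix of \Cref{Tab:Matrices} identifies $A_\perp^2|_{V_j}$ as a scalar $\lambda_j^2\in\{q^{2n-2},q^{n-1},q^{n-3}\}$, so the eigenvalues of $A_\perp$ lie in $\{q^{n-1},\pm q^{(n-1)/2},\pm q^{(n-3)/2}\}$. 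The action on $V_0$ is the valency $q^{n-1}$, and on $V_1$ one evaluates $A_\perp \one^{S_q}$ directly to get $\pm \eps q^{(n-1)/2}$ (sign depending on $q\bmod 4$, via $-1\in S_q$ versus $-1\in\ns_q$). The remaining signs and any $\pm\lambda_j$-splittings on $V_2,\dots,V_5$ are pinned down by evaluating $A_\perp$ on the combinatorial spanning sets of \Cref{Prop:R1Delsarte,Prop:RDisjointHoffman}, or equivalently by refining to the Schurian scheme of \Cref{SubSecSchurianSchemes} in which orthogonality is a union of classes, combined with the trace identity $\operatorname{tr}(A_\perp)=0$.

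The parabolic case follows the same pattern. Here $X^\perp\cap\mq$ is hyperbolic or elliptic according to the quadratic type of $X$, so the two types of anisotropic points have different valencies $q^{n-1}\mp q^{(n-2)/2}$; the graph is therefore not regular, which accounts for the eigenvalue $0$ in the list. The square $A_\perp^2$ once more lies in the Bose--Mesner algebra of a natural refinement of the scheme from \Cref{SubSecGeomSchemesOdd}, now separating the two quadratic types and the tangent versus non-tangent line relations, and the same extraction yields the spectrum. The main technical obstacle throughout is the sign bookkeeping in the elliptic/hyperbolic case: $A_\perp^2=\lambda_j^2$ determines only $|\lambda_j|$, so when a 5-class eigenspace $V_j$ genuinely splits into a $+\lambda_j$- and a $-\lambda_j$-eigenspace of $A_\perp$, matching the exact multiplicities of $\pm\eps q^{(n-1)/2}$ listed in the theorem forces one either to invoke the Schurian refinement or to carry out a direct combinatorial computation on carefully chosen test vectors.
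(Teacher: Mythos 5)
Your starting point for cases (2) and (3) --- putting $A^2$ in the Bose--Mesner algebra of the relevant scheme --- is exactly the paper's, and your expression for $A^2$ in case (2) agrees with the one the paper derives; your observation that $\one^{S_q}$ is an $A$-eigenvector with eigenvalue $\pm\eps q^{(n-1)/2}$ depending on $q\bmod 4$ is also correct. The problems are in how you finish. For the Hermitian case your argument is circular relative to this paper: \Cref{Prop:FissionHerm} is not proved independently here, it is obtained \emph{as a corollary} of precisely the spectral computation claimed in part (1) (one shows $A_{2\perp}^2=q^{n-1}\frac{q^n-\eps}{q+1}I+q^{n-1}\frac{q^{n-2}-\eps}{q+1}A_1+q^{n-2}\frac{q^{n-1}+\eps}{q+1}A_2$, determines the multiplicities, and the vanishing of one of them is what makes the fission a $4$-class scheme). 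Moreover the $\mathbf P$- and $\mathbf Q$-matrices displayed in \Cref{Sec:NU} belong to the two-class $NU$-scheme, not to the fission scheme, so there is no table from which to ``read off'' the spectrum; computing that table is the content of part (1), and your proposal supplies no computation for it.

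The second gap is the resolution of the $\pm\lambda$ splittings in cases (2) and (3). Knowing the spectrum of $A^2$ gives two sum constraints, and $\Tr(A)=0$ gives one more, but there are four unknown multiplicities in each case, so the system is underdetermined; in case (3) the multiplicities of $q^{\frac n2-1}$ and $-q^{\frac n2-1}$ are unequal, so no symmetry argument can close it either, and your sketch offers nothing at all for this case. The paper's missing ingredient is one further identity, $\Tr(A^3)$, obtained by counting ordered triangles in the orthogonality graph, which it does in all three cases. Your proposed substitutes are either not carried out or do not apply as stated: the Schurian schemes of \Cref{SubSecSchurianSchemes} live on a \emph{single} quadratic type, whereas the orthogonality graph here has edges between the two types, so orthogonality is not a union of classes of those schemes; the claim that $A_\perp$ commutes with every $A_i$ does not follow merely from joint invariance under the orthogonal group (you would need generous transitivity on \emph{all} anisotropic points so that the centralizer algebra is symmetric, hence commutative), and in the parabolic case no group maps $\pp^+$ to $\pp^-$, so your ``natural refinement separating the two quadratic types'' is a coherent configuration rather than an association scheme (the paper instead works directly with the block structure of $A$ and the known spectra of the tangency graphs). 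Your idea of evaluating $A$ on the spanning vectors $\chi_{\pi^\perp}$ of \Cref{Prop:RDisjointHoffman} could in principle show that $A$ acts as the scalar $\eps q^{\frac{n-3}2}$ on $V_2$, after which $\Tr(A)=0$ would suffice in case (2), but that computation is exactly the work you have not done; as it stands the multiplicity tables are not established.
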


We note that when $\perp$ gives rise to a quadric in $\pg(n,q)$, $q$ odd, (respectively a Hermitian variety in $\pg(n,q^2)$) and we take $\pp$ to be the set of anisotropic points of one quadratic type (respectively all anisotropic points), the eigenvalues of the orthogonality graph were described by Bannai, Hao, and Song \cite{bannaihaosong} (and also the same authors and Wei \cite{bannaisonghaowei}).
In the Hermitian case, we obtain \Cref{Prop:FissionHerm} as a corollary from our computation of the eigenvalues.

\bigskip

The general strategy to compute the eigenvalues is as follows.
Let $G$ be the orthogonality graph of $\perp$ on the anisotropic points $\pp$, and let $A$ be its adjacency matrix.
Given two points $X,Y \in \pp$, $A^2(X,Y)$ equals the number of common neighbours of $X$ and $Y$ in $G$, which equals $X^\perp \cap Y^\perp \cap \pp$.
In all of the above cases, this only depends on how $\vspan{X,Y}$ intersects the quadric or Hermitian variety defined by $\perp$.
Therefore, $A^2$ lives in the Bose-Mesner algebra of one of the previously encountered association schemes.
Hence, we can determine the eigenvalues of $A^2$ and their multiplicities.
For every eigenvalue $\lambda^2$ of $A^2$ with multiplicity $m$, it only remains to determine the multiplicities of $\lambda$ and $-\lambda$ as eigenvalues of $A$.
These latter multiplicities of course sum to $m$.

We can obtain information about the spectrum of $A$ by considering $\Tr(A^k)$ for some integers $k$, cf.\ \Cref{Eq:Trace}.
This gives a system of linear equations in $m_i$.
Moreover, $\Tr(A^k)$ gives the number of ordered closed walks of length $k$ in $G$.
In particular, $\Tr(A^0)$ equals the number of vertices, $\Tr(A)$ equals zero, and $\Tr(A^3)$ equals the number of ordered triangles.
In all cases we encounter, these equations suffice to determine the spectrum of $A$.

We now prove the three cases of \Cref{Thm:EigOrtho} in three separate subsections.

\subsection{The Hermitian case}
 \label{SecHerm}

Consider the case where $\perp$ determines a Hermitian variety $\mh(n,q^2)$ with $n \geq 2$.
Recall the relations $R_0, R_1, R_2 = R_{2\perp} \cup R_{2\not\perp}$ defined in \Cref{Sec:NU}.
Then the orthogonality graph on the anisotropic points of $\mh(n,q^2)$ is the graph corresponding to relation $R_{2\perp}$.
Let $A_i$ denote the adjacency matrix corresponding to $A_i$.
Using \Cref{DfAssocMatrices}, it suffices to prove that $\vspan{A_0=I,A_1,A_{2\perp},A_{2\not\perp}}$ is closed under matrix multiplication.
We do this by proving that all these matrices lie in the algebra generated by $A_{2\perp}$, and that this algebra is 4-dimensional.

Let $\eps$ denote $(-1)^n$.
Then
\begin{align}
 \label{EqOrthoHerm}
 A_{2\perp}^2 = q^{n-1} \frac{q^n-\eps}{q+1} I 
 + q^{n-1} \frac{q^{n-2}-\eps}{q+1} A_1
 + q^{n-2} \frac{q^{n-1}+\eps}{q+1} A_2.
\end{align}
Note that $A_2 = A_{2\perp} + A_{2\not\perp} = J - I - A_1$.
Therefore, $A_1$ is a linear combination of $A_{2\perp}^2$, $I$, and $J$.
The matrices $I$ and $J$ are inside the algebra spanned by $A_{2\perp}$.
This is obvious for $I$.
For $J$, it follows from the fact that the orthogonality graph is regular and connected, hence $\vspan \one$ is an eigenspace of $A_{2\perp}$.
The orthogonal projection onto this eigenspace lives in the algebra spanned by $A_{2\perp}$, and is a multiple of $J$.
It readily follows that $A_0=I, A_1, A_{2\perp}, A_{2\not\perp}$ all lie inside the algebra spanned by $A_{2\perp}$.
Since $A_{2\perp}$ is symmetric, the dimension of this algebra as vector subspace equals the number of distinct eigenvalues of $A_{2\perp}$.

We will now derive the spectrum of $A_{2\perp}$ from the spectrum of $A_1$, which can be read from the matrix $\mathbf P$ in \Cref{Sec:NU}.
Plugging this into \Cref{EqOrthoHerm}, yields that $A_{2\perp}^2$ has eigenvalues $q^{2(n-1)}$ and $q^{2(n-2)}$ on the orthogonal complement of $\vspan \one$.
Therefore the possible eigenvalues of $A_{2\perp}$ are
\begin{align*}
 \lambda_0 = q^{n-1} \frac{q^n-\eps}{q+1}, &&
 \lambda_1 = \eps q^{n-1}, &&
 \lambda_2 = - \eps q^{n-1}, &&
 \lambda_3 = \eps q^{n-2}, &&
 \lambda_4 = -\eps q^{n-2}.
\end{align*}
Let $m_i$ denote the multiplicity of $\lambda_i$.
First of all, we know that $m_0 = 1$.
From the top row of the matrix $\mathbf Q$ in \Cref{Sec:NU}, we know that
\begin{align*}
    m_1 + m_2 = \frac{q^2-q-1}{(q+1)(q^2-1)}\left( q^{n+1} + \eps \right) \left( q^n - \eps \right), &&
    m_3 + m_4 = \frac{q^3}{(q+1)(q^2-1)} \left( q^{n-1} + \eps \right)\left( q^n - \eps \right).
\end{align*}
We count the number of ordered triangles in the graph.
By \Cref{Res:HermitianAnisotropicPoints}, there are $q^n \frac{q^{n+1}+\eps}{q+1}$ choices for a point $X \in \pp$, $q^{n-1} \frac{q^n-\eps}{q+1}$ choices for a point $Y \in \pp \cap X^\perp$, and $q^{n-2} \frac{q^{n-1}+\eps}{q+1}$ choices for a point $Z \in X^\perp \cap Y^\perp \cap \pp$.
We may conclude that
\begin{align*}
 \sum_{i=0}^4 m_i \lambda_i = 0, &&
 \sum_{i=0}^4 m_i \lambda_i^3 = \left( q^n \frac{q^{n+1}+\eps}{q+1} \right) \left( q^{n-1} \frac{q^n-\eps}{q+1} \right) \left( q^{n-2} \frac{q^{n-1}+\eps}{q+1} \right).
\end{align*}
Using $m_0=1$, this linear system has a unique solution, namely
\begin{align*}
 m_1 &= \frac q {2(q+1)^2} \left( q^{n+1}+\eps \right) \left(q^n - \eps \right) &
 m_2 &= \frac {q-2} {2(q^2-1)} \left( q^{n+1}+\eps \right) \left(q^n - \eps \right) \\
 m_3 &= 0 &
 m_4 &= q^3 \frac{q^n-\eps}{q^2-1} \frac{q^{n-1}+\eps}{q+1} 
\end{align*}
Since $m_3 = 0$, $A_{2\perp}$ indeed has 4 distinct eigenvalues, which proves \Cref{Prop:FissionHerm}.
We have also proven \Cref{Thm:EigOrtho} (\ref{Thm:EigOrtho:Herm}).

\begin{rmk}
 Recall \Cref{Rmk:HermOrtho} (\ref{Rmk:HermOrtho:Q=2}).
 In case $q=2$, we see that $m_2 = 0$, and if in addition $n=2$, then $\lambda_0 = \lambda_1$.
\end{rmk}

\subsection{The elliptic and hyperbolic case}

Consider the case where $\perp$ determines the quadric $\mq^\eps(n,q)$, with $n$ and $q$ odd, and $\eps = \pm 1$.
Let $A$ denote the adjacency matrix of the orthogonality graph on the anisotropic points of $\mq^\eps(n,q)$.
Let $A_0, \dots, A_5$ denote the adjacency matrices of the association scheme from \Cref{Sec:NewScheme}.
Then
\begin{align*}
  A^2 = & \left( \theta_{n-1}(q) - \left| \mq(n-1,q) \right| \right) I
  + \left( \theta_{n-2}(q) - \left| \Pi_0 \mq(n-3,q) \right| \right) A_1 \\
  & + \left( \theta_{n-2}(q) - \left| \mq^\eps(n-2,q) \right| \right) (A_2+A_4)
  + \left( \theta_{n-2}(q) - \left| \mq^{-\eps}(n-2,q) \right| \right) (A_3+A_5) \\
  = & q^{n-1} I + q^{n-2} A_1 + q^\frac{n-3}2 \left( q^\frac{n-1}2 - \eps \right) (A_2+A_4) + q^\frac{n-3}2 \left( q^\frac{n-1}2 + \eps \right) (A_3+A_5) \\
  = & q^\frac{n-3}2 \left( q^\frac{n-1}2 J + q^\frac{n-1}2(q-1) I + \eps( A_3 + A_5 - A_2 - A_4) \right).
\end{align*}
Let $V_0, \dots, V_5$ denote the eigenspaces of the Bose-Mesner algebra spanned by $A_0, \dots, A_5$.
Using \Cref{Tab:Matrices}, we see that $A^2$ takes the eigenvalues $q^{2(n-1)}$ on $V_0$, $q^{n-3}$ on $V_2$, and $q^{n-1}$ on all the other eigenspaces.
Since $G$ is $q^{n-1}$-regular, $A$ has eigenvalue $q^{n-1}$ on $V_0$.
Define
\begin{align*}
 \lambda_0 = q^{n-1}, &&
 \lambda_1 = \eps q^\frac{n-1}2, &&
 \lambda_2 = -\eps q^\frac{n-1}2, &&
 \lambda_3 = \eps q^\frac{n-3}2, &&
 \lambda_4 = -\eps q^\frac{n-3}2.
\end{align*}
Let $m_i$ denote the multiplicity of $\lambda_i$ as eigenvalue of $A$.
We already know that $m_0=1$.
Moreover,
\begin{align*}
  m_1 + m_2 = q^\frac{n-1}2 \left( q^\frac{n+1}2 - \eps \right) - q^2 \frac{q^{n-1} - 1}{q^2-1} - 1, &&
  m_3 + m_4 = q^2 \frac{q^{n-1} - 1}{q^2-1}.
\end{align*}
Applying \Cref{Eq:Trace} with $k=1$ tells us that
\[
 q^{n-1} + \eps (m_1 - m_2) q^\frac{n-1}2 + \eps (m_3-m_4) q^\frac{n-3}2 = 0.
\]
Finally, we count the number of ordered triangles $(X,Y,Z)$ in the orthogonality graph.
There are $q^\frac{n-1}2 \left( q^\frac{n+1}2 - \eps \right)$ ways to choose $X$.
 Then we need to choose a anisotropic point $Y \in X^\perp$.
 Note that a point $Y \in X^\perp$ is anisotropic if and only if $\vspan{X,Y}$ is not a tangent line.
 Hence, take an non-tangent line $\ell$ through $X$, and let $Y$ denote $\ell \cap X^\perp$.
 From the proof of \Cref{LmValencies}, we know that there are $\frac 12 \left( q^{n-1}+\eps q^\frac{n-1}2 \right)$ secant lines through $X$, and $\frac 12 \left( q^{n-1}-\eps q^\frac{n-1}2 \right)$ passant lines through $X$.
 In the former case, $\ell^\perp$ intersects $\mq$ in a quadric isomorphic to $\mq^\eps(n-2,q)$ and in the latter case a quadric isomorphic to $\mq^{-\eps}(n-2,q)$.
 Therefore,
 \begin{align*}
  \sum_i \lambda_i^3 m_i = \Tr(A^3) &=  q^\frac{n-1}2 \left( q^\frac{n+1}2 - \eps \right) \left( \frac 12 \left( q^{n-1}+\eps q^\frac{n-1}2 \right) \left( \theta_{n-2}(q) - | \mq^\eps(n-2,q) | \right)\right. \\
  & \qquad+ \left.\frac 12 \left( q^{n-1}-\eps q^\frac{n-1}2 \right) \left( \theta_{n-2}(q) - | \mq^{-\eps}(n-2,q) | \right) \right) \\
  &=  q^\frac{3n-5}2 \left( q^\frac{n+1}2 - \eps \right) \left( q^{n-1} - 1 \right).
 \end{align*}

It follows that
\begin{align*}
 m_0 = 1, &&
 m_1 = \frac{q^{n+1} - 2q^n + 1}{2(q-1)} - \eps q^\frac{n-1}2, &&
 m_2 = \frac{q^{n+1}-1}{2(q+1)}, &&
 m_3 = q^2 \frac{q^{n-1}-1}{q^2-1}, &&
 m_4 = 0.
\end{align*}
This proves \Cref{Thm:EigOrtho} (\ref{Thm:EigOrtho:HypEll}).

\subsection{The parabolic case}

Consider the parabolic quadric $\mq = \mq(n,q)$, with $n$ even and $q$ odd.
As usual, let $\perp$ denote the related polarity.
Let $\pp$ denote the set of all anisotropic points.
We can partition $\pp$ into the sets
\[
 \pp^\eps = \sett{ X \in \pp }{ X^\perp \cap \mq \cong \mq^\eps(n-1,q) }
\]
for $\eps = \pm 1$.
Then $|\pp^\eps| = \frac 12 q^\frac n2 \left( q^\frac n2 + \eps \right)$.
Consider the graph $G$ defined on $\pp$, where adjacency is given by being orthogonal.
We will determine the eigenvalues of the adjacency matrix $A$ of $G$.
We can order the points of $\pp$ in such a way the points of $\pp^+$ proceed the points of $\pp^-$.
Then we can write $A$ as the block matrix
\[
 A = \begin{pmatrix}
  A_{++} & A_{+-} \\ A_{-+} & A_{--}
 \end{pmatrix},
\]
where $A_{\delta\eps}$ is a matrix whose rows are indexed by the points of $\pp^\delta$ and whose columns are indexed by the points of $\pp^\eps$.

Let $\one_\eps$ denote the all-one vector indexed by the points of $\pp^\eps$.
Take a point $X \in \pp^\eps$.
Then $X^\perp$ contains $q^{\frac n2 -1} \left( q^\frac n2 - \eps \right)$ anisotropic points, equally many from both quadratic types.
Therefore,
\[
 A_{\eps,\delta} \one_\delta = \frac 12 q^{\frac n2 - 1} \left( q^\frac n2 - \eps \right) \one_\eps,
\]
for all $\eps, \delta = \pm 1$.
It follows that 
$v_1 = \begin{pmatrix}
 \left( q^\frac n2 - 1 \right) \one_+ \\ \left( q^\frac n2 + 1 \right) \one_-
\end{pmatrix}$
and
$ v_2 = \begin{pmatrix} \one_+ \\ - \one_- \end{pmatrix} $
are eigenvectors of $A$ with respective eigenvalues $q^{n-1}$ and $0$.
We can extend $v_1$ and $v_2$ to an orthogonal basis of eigenvectors of $A$, which means that all other vectors of this basis are of the form $w = \begin{pmatrix} w_+ \\ w_- \end{pmatrix}$ with $w_\eps$ orthogonal to $\one_\eps$ for $\eps = \pm1$.

As before, we will proceed by examining the spectrum of $A^2$.
Since $A^2(X,Y)$ equals the number of anisotropic points in $X^\perp \cap Y^\perp$, we find that
\[
 A^2(X,Y) = \begin{cases}
  q^{\frac n2 - 1} \left( q^\frac n2 - \eps \right) & \text{if } X = Y \in \pp^\eps, \\
  q^{\frac n2 - 1} \left( q^{\frac n2 - 1} - \eps \right) & \text{if $\vspan{X,Y}$ is a tangent line, $X,Y \in \pp^\eps$}, \\
  q^{n-2} & \text{otherwise}.
 \end{cases}
\]
Recall the graphs from \Cref{SubSecGeomSchemesOdd} defined on $\pp^\eps$, where adjacency is given by lying on a tangent line.
Let $T_\eps$ denote the corresponding adjacency matrix.
Then
\begin{align*}
 A^2 = q^{n-2} J + q^{n-2}(q-1) I + q^{\frac n2 - 1} \begin{pmatrix}
  - I_+ - T_+ & O \\
  O &  I_- + T_-
 \end{pmatrix}.
\end{align*}
Now suppose that $\one_\eps, w_{\eps,2}, \dots, w_{\eps,|\pp^\eps|}$ is an orthonormal basis of eigenvectors of $T_\eps$ for $\eps = \pm 1$.
Then it readily follows that
\[
 v_1, v_2, 
 \begin{pmatrix} w_{+,2} \\ \zero \end{pmatrix}, \dots,
 \begin{pmatrix} w_{+,|\pp^+|} \\ \zero \end{pmatrix},
 \begin{pmatrix} \zero \\ w_{-,2} \\ \end{pmatrix}, \dots,
 \begin{pmatrix} \zero \\ w_{-,|\pp^-|} \\ \end{pmatrix}
\]
is an orthonormal basis of eigenvectors of $A^2$.
Note also that if $w_{\eps,i}$ has eigenvalue $\lambda$ with respect to $T_\eps$, then its corresponding eigenvector of $A^2$ has eigenvalue $q^{n-2}(q-1) - \eps q^{\frac n2 - 1}(\lambda + 1)$.
Combining this with \Cref{Tab:SpecSRGParabolic}, we see that the spectrum of $A^2$ is given by
\begin{center}
 \begin{tabular}{c || c | c | c | c}
  Eigenvalue & $q^{2(n-1)}$ & $q^{n-1}$ & $q^{n-2}$ & 0 \\ \hline
  Multiplicity & 1 & $q^n - 2 - q \theta_{n-2}(q)$ & $q \theta_{n-2}(q)$ & $1$
 \end{tabular}\:.
\end{center}
Define
\begin{align*}
 \lambda_0 = q^{n-1}, &&
 \lambda_1 = \sqrt q q^{\frac n2 - 1}, &&
 \lambda_2 = -\sqrt q q^{\frac n2 - 1}, &&
 \lambda_3 = q^{\frac n2 - 1}, &&
 \lambda_4 = -q^{\frac n2 - 1}, &&
 \lambda_5 = 0.
\end{align*}
Let $m_i$ denote the multiplicity of $\lambda_i$ as eigenvalue of $A$.
Note that $m_0 = m_5 = 1$, $m_1+m_2 = q^n-2-q\theta_{n-2}(q)$, and $m_3+m_4 = q\theta_{n-2}(q)$.
We finish the calculations again by applying \Cref{Eq:Trace} with $k=1$ and $k=3$.
To this end, we count the number of ordered triangles $(X,Y,Z)$ in $G$, which equals $\Tr(A^3)$.
There are $\frac 12 q^\frac n2 \left( q^\frac n 2 + \eps\right)$ choices for $X \in \pp^\eps$.
There are $q^{\frac n2 - 1} \left( q^\frac n2 - \eps \right)$ anisotropic points $Y \in X^\perp$.
Note that the line $\vspan{X,Y}$ is not tangent, since it doesn't meet $X^\perp$ in a point of $\mq$.
Hence, $\vspan{X,Y}^\perp \cap \mq \cong \mq(n-2,q)$, which means there are $q^{n-2}$ choices for $Z$.
We conclude that
\[
 \sum_i m_i \lambda_i^3 = \Tr(A^3)
 = \sum_{\eps = \pm 1} \frac 12 q^\frac n2 \left( q^\frac n2 + \eps \right) q^{\frac n2 - 1} \left( q^\frac n2 - \eps \right) q^{n-2}
 = q^{2n-3} (q^n-1).
\]
The system of linear equations in $m_0,\dots,m_5$ has as unique solution
\begin{align*}
 m_0 &= m_5 = 1, &
 m_1 &= m_2 = \frac 1 2 \left( q^n - q \theta_{n-2}(q) \right) - 1, \\
 m_3 &= \frac 12 \left( q \theta_{n-2}(q) - q^\frac n2 \right), &
 m_4 &= \frac 12 \left( q \theta_{n-2}(q) + q^\frac n2 \right).
\end{align*}
This proves \Cref{Thm:EigOrtho} (\ref{Thm:EigOrtho:Par}).

\paragraph{Acknowledgements.}
The first author is very grateful for the hospitality of the Faculty of Mathematics at the University of Rijeka during his visit to the second author, during which the research of the present article was initiated. The authors are also grateful for helpful discussions with Jan De Beule and Ferdinand Ihringer, and to Edwin van Dam for pointing out \cite{FisherPenttilaPraegerRoyle} to us.
This work has been partially supported by Croatian Science Foundation under the project 5713.

\end{document}